\documentclass[reqno,11pt]{amsart}
%%%%%%%%%%%%%%%%%%%%%%%%%%%%%%%%%
%% dimensioni
%%%%%%%%%%%%%%%%%%%%%%%%%%%%%%%%%
\textheight22cm \topmargin-0.3cm \oddsidemargin7mm
\evensidemargin7mm \textwidth14cm \headsep0.8cm \headheight0.4cm
%\parskip1mm
%%%%%%%%%%%%%%%%%%%%%%%%%%%%%%%%%
%% numerazione equazioni
%%%%%%%%%%%%%%%%%%%%%%%%%%%%%%%%%
%\numberwithin{equation}{section}
%%%%%%%%%%%%%%%%%%%%%%%%%%%%%%%%%
%% packages
%%%%%%%%%%%%%%%%%%%%%%%%%%%%%%%%%
%\usepackage[notref,notcite]{showkeys}
\usepackage{amsmath}
\usepackage{amsthm}
\usepackage{epsfig}
\usepackage{psfrag}
\usepackage{graphicx}
\usepackage{graphpap,latexsym,epsf}\graphicspath{{figure/}}
\usepackage{color}
\usepackage{cite}
\usepackage{amssymb,mathrsfs,enumerate}
\usepackage{mathtools}
\usepackage{subfigure}

\usepackage[normalem]{ulem}

\mathtoolsset{showonlyrefs}

%%%%%%%%%%%%%%%%%%%%%%%%%%%%%%%%%
% da Lore:
\definecolor{citegreen}{rgb}{0,0.6,0}
\definecolor{refred}{rgb}{0.8,0,0}
\usepackage[colorlinks, citecolor=citegreen, linkcolor=refred]{hyperref}
%%%%%%%%%%%%%%%%%%%%%%%%%%%%%%%%%
%% macroes
%%%%%%%%%%%%%%%%%%%%%%%%%%%%%%%%%

\newcommand{\gtop}{{\gsigma}}
\newcommand{\gsigma}{g{}^{\!\top}}
\newcommand{\gSigma}{\gtop}

{\catcode `\@=11 \global\let\AddToReset=\@addtoreset}
\AddToReset{equation}{section}

\newcounter{mnotecount}[section]

\renewcommand{\themnotecount}{\thesection.\arabic{mnotecount}}

\newcommand{\mnote}[1]%{}
{\protect{\stepcounter{mnotecount}}$^{\mbox{\footnotesize
$%\!\!\!\!\!\!\,
\bullet$\themnotecount}}$ \marginpar{%\color{red}%
\raggedright\tiny\em
$\!\!\!\!\!\!\,\bullet$\themnotecount: #1} }

\newcommand{\jj}[1]%
{{\color{red}\mnote{{\color{red}{\bf jj:} #1} }}}

\newcommand{\R}{\mathbb{R}}
\newcommand{\N}{\mathbb{N}}

\def\HHH{{\rm H}}
\def\RRR{{\mathrm R}}

\newcommand{\pa}{\partial}
\newcommand{\Om}{\Omega}
\newcommand{\ffi}{\varphi}

\newcommand{\ep}{\varepsilon}
\newcommand{\rmd}{{\rm d}}

% comandi per metrica $g_0$:

\newcommand{\go}{g}

\newcommand{\Ric}{{\rm Ric}}

\newcommand{\D}{\nabla}%{{\rm D}}
\newcommand{\DD}{{\rm D}^2}
\newcommand{\De}{\Delta}

% comandi per metrica $g$:

\newcommand{\na}{\nabla}
\newcommand{\nana}{\nabla^2}

\newcommand{\hhh}{{\rm h}}

% comandi aggiunti da me

\newcommand{\fmax}{f_{{\rm max}}}
\newcommand{\mmax}{m_{{\rm max}}}
\newcommand{\umax}{u_{{\rm max}}}

\mathchardef\emptyset="001F

%%%%%%%%%%%%%%%%%%%%%%%%%%%%%%%%%
%% COLORS
%%%%%%%%%%%%%%%%%%%%%%%%%%%%%%%%%
\definecolor{vgreen}{rgb}{0.1,0.5,0.2}
\definecolor{viola}{RGB}{85,26,139}
%%%%%%%%%%%%%%%%%%%%%%%%%%%%%%%%%
%% environments
%%%%%%%%%%%%%%%%%%%%%%%%%%%%%%%%%
%\renewcommand{\thesubsection}{\arabic{subsection}}
%\renewcommand{\theequation}{\thesubsection.\arabic{equation}}

\newtheorem{theorem}{Theorem}[section]
\newtheorem{remark}[theorem]{Remark}
\newtheorem{corollary}[theorem]{Corollary}

\newtheorem{proposition}[theorem]{Proposition}

%%%%%%%%%%%%%%%%%%%%%%%%%%%%%%%%%
%% TO HIGHLIGHT CHANGES
%%%%%%%%%%%%%%%%%%%%%%%%%%%%%%%%%

%%%%%%%%%%%%%%%%%%%%%%%%%%%%%%%%%
%% SIZES
%%%%%%%%%%%%%%%%%%%%%%%%%%%%%%%%%
\addtolength{\textwidth}{2.4cm}
%\marginparwidth=+54pt
%\addtolength{\marginparwidth}{-2cm}
\addtolength{\textheight}{2cm}
%\addtolength{\topmargin}{-22pt}
%\addtolength{\evensidemargin}{-1cm}
\addtolength{\voffset}{-1cm}
\hoffset=-19pt
%\addtolength{\hoffset}{-0.5cm}
\addtolength{\footskip}{1cm}

%%%%%%%%%%%%%%%%%%%%%%%%%%%%%%%%%
%%%%%%%%%%%%%%%%%%%%%%%%%%%%%%%%%

\begin{document}

\hyphenation{ma-ni-fold}

\title
[On the uniqueness of Schwarzschild--de Sitter spacetime]
{On the uniqueness of Schwarzschild--de Sitter spacetime}

\author[S.~Borghini]{Stefano Borghini}
\address{S.~Borghini, Uppsala Universitet, L\"{a}gerhyddsv\"{a}gen 1, 752 37 Uppsala, Sweden}
\email{stefano.borghini@math.uu.se}

\author[P.T.~Chru\'sciel]{Piotr T. Chru\'sciel}
\address{P.T.~Chru\'sciel, University of Vienna, Gravitational Physics
Boltzmanngasse 5, A 1090 Vienna, Austria}
\email{Piotr.Chrusciel@univie.ac.at}

%%\author[V.~Agostiniani]{Virginia Agostiniani}
%\address{V.~Agostiniani, SISSA, via Bonomea 265, 34136 Trieste, Italy}
%\email{vagostin@sissa.it}

\author[L.~Mazzieri]{Lorenzo Mazzieri}
\address{L.~Mazzieri, Universit\`a degli Studi di Trento, via Sommarive 14, 38123 Povo (TN), Italy}
\email{lorenzo.mazzieri@unitn.it}

% \thanks{}

%%%%%%%%%%%%%%%%%%%%%%%%%%%%%%%%%%%%%%%%%%%%%%%%

\begin{abstract}
We establish a new uniqueness theorem for the three dimensional Schwarzschild--de Sitter metrics. For this some new or improved tools are developed. These include a reverse {\L}ojasiewicz inequality, which  holds in a neighborhood of the extremal points of any smooth function. We further prove smoothness of the set of maxima of the lapse, whenever this set contains a topological hypersurface. This leads to a new strategy for the classification of well behaved static solutions of Einstein equations with a positive cosmological constant, based on the geometry of the maximum-set of the lapse.
%{\color{red} INTERESTING DIRECTION, WHICH IS NOT DEVELOPED SO FAR IN THE MANUSCRIPT: Some new families of local solutions, most likely containing naked singularities, of the static Einstein equations with a positive cosmological constant are also constructed.
%}
\end{abstract}

%%%%%%%%%%%%%%%%%%%%%%%%%%%%%%%%%%%%%%%%%%%%%%%%

\maketitle

\smallskip

\noindent\textsc{MSC (2010):
26D10,
% Inequalities involving derivatives and differential and integral operators
\!35B38,
%Critical points
\!58K05,
%Critical points of functions and mappings
%\!35N25.
%overdetermined bvp
}

\smallskip
\noindent\keywords{\underline{Keywords}: Schwarzschild--deSitter solution, uniqueness of static black holes, regularity of the extremal level sets, {\L}ojasiewicz gradient inequality.}

\date{\today}

%\input{Introduction}

%%%%%%%%%%%%%%%%%%%%%%%%%%%%%%%%%
%%%%%%%%%%%%%%%%%%%%%%%%%%%%%%%%%

\section{Introduction}

%%%%%%%%%%%%%%%%%%%%%%%%%%%%%%%%%
%%%%%%%%%%%%%%%%%%%%%%%%%%%%%%%%%

A basic and fundamental question in the study of the mathematical aspects of General Relativity is the classification of the static solution to the Einstein equations, starting from the case of vacuum solutions.
The first result in this field is the celebrated Israel's Theorem~\cite{Israel}, concerning the uniqueness of the Schwarzschild solution, among the asymptotically flat static solutions to the vacuum Einstein equations. Different proofs and refinements of Israel's result have been proposed by
many authors~\cite{Robinson,Bun_Mas,ZHa_Rob_Sei,Ago_Maz_2,
Reiris,Reiris_claI,Reiris_claII,Chstatic}.
%and recently by the third author in collaboration with Agostiniani~\cite{Ago_Maz_2} and by Reiris~\cite{Reiris,Reiris_claI,Reiris_claII}.
% Robinson~\cite{Robinson}, Bunting and Masood-ul Alam~\cite{Bun_Mas}, zum Hagen, Robinson and Seifert~\cite{ZHa_Rob_Sei}
%and recently by the third author in collaboration with Agostiniani~\cite{Ago_Maz_2} and by Reiris~\cite{Reiris,Reiris_claI,Reiris_claII}.

A similar analysis has been performed for asymptotically hyperbolic static solutions. Wang~\cite{Wang_2} and the second author together with Herzlich~\cite{Chr_Her} proved a uniqueness theorem for the Anti de Sitter solution (compare~\cite{Bou_Gib_Hor}), whereas Lee and Neves~\cite{Lee_Nev} obtained a similar result for the Kottler spacetimes with negative mass aspect (compare \cite[Remark~3.4]{GallowayWoolgar}).

In contrast with the significant amount of achievements in the case $\Lambda \leq 0$, very little is known when $\Lambda$ is positive, except in the locally conformally flat case~\cite{Lafontaine}, for perturbations of the model solutions~\cite{Hintz} and under pinching assumptions on the curvature~\cite{Ambrozio}.
 Building on the concept of {\em virtual mass} introduced in~\cite{Bor_Maz_2-I,Bor_Maz_2-II}, we prove the following uniqueness result for the Schwarschild--de Sitter black hole.

\begin{theorem}[Uniqueness of the Schwarzschild--de Sitter Black Hole]
\label{thm:uniqueness}
Let $(M,g)$ be a compact $3$-dimensional totally geodesic spacelike slice
bounded by Killing horizons
 within a $(3+1)$-dimensional static solution to the vacuum Einstein equations with cosmological constant $\Lambda>0$, and let $u \in \mathscr{C}^\infty(M)$ be the corresponding positive lapse function, vanishing on the boundary of $M$.
Assume that the set
$$
{\rm MAX}(u) \,=\,\{ x\in M\,:\,u(x)\,=\, u_{\rm max}\}
$$
is disconnecting the manifold $M$ into an outer region $M_+$ and an inner region $M_-$ having the same {\em virtual mass} $0< m < 1/(3\sqrt{3})$. Then $(M,g)$
can be isometrically embedded in
the Schwarzschild--de Sitter black hole with mass parameter  $m$.
\end{theorem}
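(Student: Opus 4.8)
The plan is to use the hypothesis on $\Sigma:={\rm MAX}(u)$ to split $M$ along $\Sigma$, to run a monotonicity argument on each of the two pieces $M_{+}$ (the outer region, between $\Sigma$ and the cosmological horizon) and $M_{-}$ (the inner region, between the black-hole horizon and $\Sigma$), and to close the argument by matching the two sides through the common virtual mass $m$, thereby forcing $(M,g)$ to coincide with the corresponding portion of the Schwarzschild--de Sitter black hole of mass parameter $m$. With $\Lambda$ normalised so that $m$ is the dimensionless virtual mass, the model metric is $\varphi^{-1}\rmd r^{2}+r^{2}g_{\Sph^{2}}$ with $\varphi(r)=1-2m/r-\tfrac{\Lambda}{3}r^{2}$ and lapse $\sqrt{\varphi}$, and $0<m<1/(3\sqrt 3)$ is precisely the range in which $\varphi$ has two distinct positive roots $r_{-}<r_{+}$ (the horizons) and a unique interior critical radius $r_{0}$ of the lapse; on the model $r$ increases monotonically from $r_{-}$ through $r_{0}$ to $r_{+}$, while $u$ increases from $0$ to $\umax$ on $M_{-}$ and decreases back to $0$ on $M_{+}$, and the whole argument amounts to recovering this picture.

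\textit{Step 1 (the maximum set).} Since $\Sigma$ disconnects the connected $3$-manifold $M$ it contains a topological hypersurface, so the smoothness result stated above applies and $\Sigma$ is a smooth closed embedded surface, with $\nabla u\equiv 0$ on it. Inserting $\nabla u|_{\Sigma}=0$ into the static system $u\,\Ricg=\nabla^{2}u+\Lambda u\,g$, $\Deg u=-\Lambda u$ gives at once that the restriction of $\nabla^{2}u$ to $T\Sigma$ vanishes, hence $\Ricg|_{T\Sigma}=\Lambda\,g|_{T\Sigma}$; tracing and using $\Rg=2\Lambda$ yields $\Ricg(\nu,\nu)|_{\Sigma}=0$ and therefore $\nabla^{2}u(\nu,\nu)|_{\Sigma}=-\Lambda\,\umax$, exactly the value realised on $\{r=r_{0}\}$ in the model; in particular $\Sigma$ is a nondegenerate (Morse--Bott) critical manifold of $u$. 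The function-theoretic input --- the reverse {\L}ojasiewicz inequality near the extrema of a smooth function, supplemented where useful by the {\L}ojasiewicz gradient inequality for the (real-analytic) static metric --- is then used to upgrade this infinitesimal data to the sharp quadratic expansion $u=\umax-\tfrac{\Lambda\umax}{2}\,\dist(\,\cdot\,,\Sigma)^{2}+o\!\big(\dist(\,\cdot\,,\Sigma)^{2}\big)$ near $\Sigma$ and, more importantly, to uniform control of the level sets $\{u=t\}$ as $t\uparrow\umax$: they converge smoothly to $\Sigma$, and the reparametrisation introduced below is regular across $\Sigma$ despite the degeneration of this foliation there.

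\textit{Step 2 (pseudo-radial function and monotonicity).} Using the common value $m$ of the virtual mass, define a pseudo-radial function $r$ on $M$ by inverting $t\mapsto\sqrt{1-2m/\rho-\tfrac{\Lambda}{3}\rho^{2}}$ on $[r_{0},r_{+}]$ over $M_{+}$ and on $[r_{-},r_{0}]$ over $M_{-}$. By Step 1 the two branches fit into a single function on $M$, smooth (its naive singularity at $\Sigma$ being cancelled by the quadratic vanishing of $\umax-u$), equal to $r_{0}$ on $\Sigma$, and increasing monotonically from $r_{-}$ on the black-hole horizon to $r_{+}$ on the cosmological horizon. The core of the proof is a monotonicity formula, in the spirit of the $U_{p}$-functionals of Borghini--Mazzieri, for an integral quantity $W=W(r)$ built from $|\nabla u|$, the mean curvature of the hypersurfaces $\{r=\mathrm{const}\}$ and $r$ itself: one shows that $W$ is well defined and continuous on $[r_{-},r_{+}]$ (continuity at $r_{0}$ using the asymptotics of Step 1), smooth with $W'\ge 0$ on each of the two regions where $\nabla u\neq 0$ --- hence nondecreasing on the whole of $[r_{-},r_{+}]$ --- and that $W'$ vanishes on those regions only where the trace-free parts of $\nabla^{2}u$ and of the second fundamental form of $\{r=\mathrm{const}\}$ both vanish.

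\textit{Step 3 (matching and rigidity).} By the construction of $W$ and the definition of the virtual mass, $\lim_{r\to r_{-}}W(r)=\lim_{r\to r_{+}}W(r)=\mathcal W(m)$ for one and the same function $\mathcal W$ (indeed $W$ is constant on the model, equal to $\mathcal W$ of the mass parameter). Since $W$ is nondecreasing on $[r_{-},r_{+}]$ with equal endpoint values, $W\equiv\mathcal W(m)$, so $W'\equiv 0$ off $\Sigma$; the equality case of the monotonicity formula then forces every regular level set of $u$ to be a totally umbilic round sphere and $g$ to be the warped product $\varphi(r)^{-1}\rmd r^{2}+r^{2}g_{\Sph^{2}}$, and substituting this into the static equations together with $\Deg u=-\Lambda u$ and $u|_{r=r_{\pm}}=0$ pins $\varphi(r)=1-2m/r-\tfrac{\Lambda}{3}r^{2}$. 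Hence $(M,g)$ is isometric to the region $\{r_{-}\le r\le r_{+}\}$ of the Schwarzschild--de Sitter black hole of mass parameter $m$, which is the assertion. The crux is the interplay between Steps 1 and 3: absent the function-theoretic control of $u$ near $\Sigma$ one cannot exclude that $u$ reaches its maximum faster than quadratically, or that ${\rm MAX}(u)$ fails to be a manifold, and either failure would make the boundary contribution of the monotonicity formula at $\Sigma$, and thus the matching with the virtual mass, inaccessible; establishing the reverse {\L}ojasiewicz inequality for arbitrary smooth functions, proving smoothness of ${\rm MAX}(u)$ once it contains a topological hypersurface, and distilling from these the sharp collapse rate of the level sets onto $\Sigma$, is the genuinely new part, whereas the monotonicity formula and its rigidity, though technically demanding, follow the by-now established template for static metrics with positive cosmological constant.
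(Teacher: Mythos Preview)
Your outline diverges substantially from the paper's proof, and the divergence is not cosmetic: the step you take for granted in Step~3 is precisely the obstacle that the paper's new strategy was designed to bypass.

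The paper does \emph{not} run a global integral monotonicity with endpoint matching at the two horizons. It works entirely \emph{locally at the interface} $\Sigma=\overline{M_+}\cap\overline{M_-}$. The pointwise gradient estimate $|\nabla u|\le |\nabla u_m|\circ\Psi$ (whose proof does use the reverse {\L}ojasiewicz inequality, to show the barrier $W$ tends to zero at $\Sigma$) is applied separately on $M_+$ and on $M_-$, each with the pseudo-radial function built from the common virtual mass $m$. One then Taylor-expands the quantity $|\nabla u|^2/\big(\Psi^2[1-(n-2)m\Psi^{-n}]^2\big)\le 1$ in powers of the signed distance $r$ to $\Sigma$: the order-$r$ coefficient, read from the $M_+$ side and from the $M_-$ side, squeezes $\HHH/(n-1)$ to the model value $\umax(m)/r_0(m)$; substituting this back, the order-$r^2$ coefficient forces $|\mathring{\hhh}|=0$; Gauss--Codazzi then fixes $\RRR^\Sigma=2/r_0^2(m)$. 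Since $\Sigma$ is two-dimensional, it is a round sphere, hence Einstein and totally umbilic. At this point the paper invokes the \emph{uniqueness} in Cauchy--Kovalevskaya (Theorem~\ref{thm:CKappli}): in Gaussian coordinates based on $\Sigma$ the static system is a noncharacteristic analytic quasilinear system, and the Cauchy data $(g^\Sigma,\hhh,\umax,0)$ just determined coincide with those of the Schwarzschild--de~Sitter model on $\{r=r_0(m)\}$, so the solution agrees with the model near $\Sigma$ and then, by repeated application of Cauchy--Kovalevskaya along level sets of $u$, everywhere on $M$.

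Your Step~3 assertion that $\lim_{r\to r_-}W(r)=\lim_{r\to r_+}W(r)=\mathcal W(m)$ is the gap. The virtual mass encodes only the \emph{maximum} surface gravity among the horizon components of a region; if $\partial M_+$ or $\partial M_-$ is disconnected, an integral over the horizon level set is not a function of $m$ alone, and the endpoint matching is unjustified. This is exactly why the earlier monotonicity argument you are imitating (the $U_p$-functional route of \cite{Bor_Maz_2-II}) needed the cosmological horizon to be connected, and why the present paper abandons it in favour of the Cauchy--Kovalevskaya step: the equal-mass hypothesis is consumed only through the two one-sided pointwise gradient estimates near $\Sigma$, and no information about the number or areas of horizon components is ever invoked. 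Your Step~1 is essentially correct and matches the paper; Steps~2--3, as written, reproduce the old approach and would require the extra connectedness hypothesis that the theorem explicitly dispenses with.
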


For a better understanding of the above statement it is useful to set the stage for the problem. Recall that one can describe an $(n+1)$--dimensional static solution of the vacuum Einstein equations with a positive cosmological constant, by a triple $(M,g,u)$, where $(M,g)$ is an $n$-dimensional Riemannian manifold with boundary, with the {\em lapse function} $u\ge 0$ vanishing precisely  on the boundary of $M$. %Each component of the boundary can be assigned a type, ``inner" or ``outer"; this corresponds to ``cosmological" or ``black hole" horizon.
The connected components of the zero-level set of $u$ are referred to as \emph{horizons},  they typically correspond to Killing horizons in the associated spacetime. Depending upon the value assumed by the {\em (normalized) surface gravity} $|\D u|/\umax$, each horizon is either of cosmological type (low surface gravity) or of black hole type (high surface gravity). When the set on which $u$ attains its maximum separates $M$ into two components $M_\pm$, it was shown in \cite{Bor_Maz_2-II,Bor_Maz_2-I} how to assign a {\em virtual mass} $m_\pm$ to each component. The virtual masses are  shown there to obey the inequality $m_+\ge m_-$ when $M_+$ is bounded only by horizons of cosmological type (we then say that $M_+$  is an {\em outer region}).
By definition, for $n=3$, the whole range of the virtual mass parameter $m$ is $0\le  m \le 1/(3\sqrt{3})$.  The case $m=0$  leads to the de Sitter space~\cite[Theorem 2.3]{Bor_Maz_2-I}. The case $m  =1/(3\sqrt{3})$ leads to the Nariai solution.
The known explicit solutions have the property that the virtual masses coincide, i.e., $m_+ =m= m_-$, for some $0<  m < 1/(3\sqrt{3})$. It was shown in~\cite[Theorem 1.9]{Bor_Maz_2-II} that, in three space-dimensions, equality of the masses and connectedness of the part of the horizon bounding $M_+$ implies that the sharp area bound (see~\cite[Theorem 1.4]{Bor_Maz_2-II})
\begin{equation*}
|\pa M_+| \, \leq \, 4\pi r_+^2(m)
\end{equation*}
is saturated and thus $(M,g,u)$ arises from the Schwarzschild-de Sitter spacetime.
Theorem~\ref{thm:uniqueness} can be seen
 as an improvement of this result:
indeed, we are able to remove the hypothesis on the connectedness of the cosmological horizon. Perhaps more significantly, a completely new strategy of proof is used.

At the heart of the new strategy is a detailed analysis of the locus ${\rm MAX} (u)$, established in a fairly large generality and which we believe of independent interest. Our first main result in this context is a \emph{reverse \L ojasiewicz inequality}, Theorem~\ref{thm:rev_loj} below, which provides an estimate on the gradient of a smooth function in terms of its increment  near and away from its maximum set.
 This is an improved version of~\cite[Proposition~2.3]{Bor_Maz_2-II} (see also~\cite[Section~1.1.5]{Bor-thesis}). The result is used to control potential singularities in the function $W$ which appears in the proof the gradient estimates~\eqref{gradest} below and in turns plays a key role in the proof of Theorem~\ref{thm:uniqueness}.
Our next main result is Corollary~\ref{cor:smooth}, which shows that the top stratum  (defined in~\eqref{eq:deco_Loja} below)   of the set of maxima of an analytic function with Laplacian bounded away from zero is a smooth embedded submanifold. This result is crucial for our strategy, as it allows us to invoke the uniqueness part of the Cauchy--Kovalewskaya Theorem to classify the solutions, leading to:

\begin{theorem}[Geometric criterion for isometric embeddings]
 \label{T9VII19.1}
Let $(M,g)$ be a compact $n$-dimensio\-nal,  $n \ge 3$, totally geodesic spacelike slice
bounded by Killing horizons
 within a $(n+1)$-dimensional static solution to the vacuum Einstein equations with cosmological constant $\Lambda>0$.
Let also $u \in \mathscr{C}^\infty(M)$ be the corresponding positive lapse function, vanishing on the boundary of $M$ and let us denote by $\Sigma$ a connected component of the top stratum of ${\rm MAX}(u)$.
If the metric induced on $\Sigma$ by $g$ is Einstein and $\Sigma$  is  totally umbilic, then $(M,g)$ can be isometrically embedded either in a Birmingham-Kottler spacetime or in a Nariai spacetime.
\end{theorem}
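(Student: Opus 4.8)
\emph{Strategy.} The plan is to read off the Cauchy data of the static triple $(g,u)$ along $\Sigma$ --- which, because $\Sigma$ sits in the maximum set of $u$ and because of the two hypotheses, turns out to be completely rigid --- to match it with that of an explicit model (a Birmingham-Kottler metric when $\Sigma$ has nonzero mean curvature, a Nariai metric when $\Sigma$ is totally geodesic), and then to use analyticity of static vacuum solutions together with the uniqueness part of the Cauchy--Kovalewskaya theorem to identify $(M,g,u)$ with the model, first near $\Sigma$ and then globally by analytic continuation.

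First, recall that $g$ and $u$ are real-analytic in harmonic coordinates and solve $\nabla^2 u = u(\Ricg - c\,g)$ and $\Delta u = -c\,u$, with $c = 2\Lambda/(n-1)$ and $\Rg \equiv (n-1)c$. Since $\nabla u$ vanishes identically on ${\rm MAX}(u)\supseteq\Sigma$ while $\Delta u = -c\,u_{\rm max}$ is bounded away from zero near it, Corollary~\ref{cor:smooth} shows that $\Sigma$ is a smooth, indeed analytic, embedded hypersurface (the maximum condition already forces $\dim\Sigma\le n-1$, since the rank of $\nabla^2 u$ is at least one along ${\rm MAX}(u)$ while $T\Sigma\subseteq\ker\nabla^2 u$). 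Let $\nu$ be a unit normal and $A$, $H$ the second fundamental form and mean curvature of $\Sigma$. On $\Sigma$ one has $u\equiv u_{\rm max}$, $\partial_\nu u\equiv 0$, and --- differentiating $\nabla u=0$ along $\Sigma$ and using that $u|_\Sigma$ is constant --- $\nabla^2 u = -c\,u_{\rm max}\,\nu^\flat\otimes\nu^\flat$ on $\Sigma$; feeding this into the static equation gives $\Ricg(X,Y)=c\,g(X,Y)$ for $X,Y\in T\Sigma$ and $\Ricg(X,\nu)=\Ricg(\nu,\nu)=0$ on $\Sigma$.

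Now I bring in the hypotheses. Total umbilicity reads $A=\tfrac{H}{n-1}\,g|_\Sigma$; tracing the Codazzi equation and using $\Ricg(X,\nu)=0$ shows that $H$ is constant on the connected hypersurface $\Sigma$. Combining the Gauss equation with umbilicity, the hypothesis $\Ric_{g|_\Sigma}=\kappa\,g|_\Sigma$, and $\Ricg(\nu,\nu)=0$ yields the algebraic relation $\kappa = c + \tfrac{n-2}{(n-1)^2}H^2$. Hence the geometric Cauchy data of $(g,u)$ along $\Sigma$, namely $\bigl(g|_\Sigma,\ \tfrac{H}{n-1}g|_\Sigma,\ u_{\rm max},\ 0\bigr)$, is pinned down by $u_{\rm max}$, the constant $H$, and the Einstein manifold $(\Sigma,g|_\Sigma)$, subject to that relation. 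Since any Einstein manifold can serve as cross-section of a Birmingham-Kottler metric, I can exhibit a model with the same data: if $H>0$ (orient $\nu$ accordingly) take $\tfrac{dr^2}{V(r)}+r^2 g_S$ with $u=\sqrt{V}$, cross-section $g_S$ a constant rescaling of $g|_\Sigma$, radius $r_0$ of the critical slice $\{V'=0\}$ fixed by $H=(n-1)u_{\rm max}/r_0$, curvature parameter fixed by $\kappa$, and mass by $V'(r_0)=0$; a short computation shows that the relation between $\kappa$, $H$ and $c$ is exactly what forces $V(r_0)=u_{\rm max}^2$, so the model carries the same Cauchy data along its critical slice. If $H=0$ the same bookkeeping produces a Nariai metric $ds^2+g_N$ with $u=u_{\rm max}\cos(\sqrt{c}\,(s-s_0))$ and totally geodesic critical slice isometric to $(\Sigma,g|_\Sigma)$.

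Finally, pass to the associated static spacetimes $-u^2\,dt^2+g$ and write the vacuum Einstein equations with $\Lambda$ in wave coordinates, so that they become a quasilinear system of wave equations for the spacetime metric for which the timelike hypersurface $\R\times\Sigma$ is non-characteristic. Both $(M,g,u)$ and the model are analytic and, by the above, carry the same Cauchy data there (the gauge and constraint conditions holding because the data come from genuine solutions), so the uniqueness part of the Cauchy--Kovalewskaya theorem produces an isometry of a neighborhood of $\R\times\Sigma$ onto a neighborhood of the corresponding slice in the model, which restricts to an isometric embedding of a neighborhood of $\Sigma$ in $(M,g)$ intertwining the lapse functions. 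By connectedness and analyticity of $(M,g)$ this local embedding propagates by analytic continuation to an isometric embedding of $(M,g)$ into the maximally extended Birmingham-Kottler or Nariai spacetime, which is the assertion. I expect the main difficulty to be this last part: one must fix the wave gauge so that Cauchy--Kovalewskaya uniqueness applies to genuinely geometric (not gauge-dependent) data, and one must check that the analytic continuation terminates inside a bona fide global Birmingham-Kottler or Nariai spacetime --- with the correct topology and horizon structure --- rather than in some partial analytic extension; a secondary point is to make sure the argument is unaffected by lower-dimensional strata of ${\rm MAX}(u)$ accumulating at $\Sigma$.
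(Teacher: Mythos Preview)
Your overall strategy --- identify the Cauchy data on $\Sigma$, match it with an explicit BK/Nariai model, and invoke Cauchy--Kovalevskaya uniqueness --- is exactly the paper's. The first two paragraphs of your argument (rigidity of $\Ric_g$ along $\Sigma$, constancy of $H$ via Codazzi, and the algebraic relation tying $\kappa$, $H$, $c$) are essentially the same computations the paper carries out, only organized slightly differently.

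The genuine divergence is in how Cauchy--Kovalevskaya is applied. You lift to the Lorentzian spacetime $-u^2\,dt^2+g$, pass to wave (harmonic) coordinates, and treat $\R\times\Sigma$ as a non-characteristic timelike hypersurface for the reduced Einstein system. The paper instead stays on the Riemannian side: it writes $g=dr^2+g_{ij}(r,\vartheta)\,d\vartheta^i d\vartheta^j$ in Gaussian normal coordinates based on the signed distance $r$ to $\Sigma$, and treats the static equations $u\,\Ric_{ij}=\nabla^2_{ij}u+nu\,g_{ij}$, $\Delta u=-nu$ as a second-order quasilinear system for the unknowns $(u,\,g_{ij})$ with initial surface $\{r=0\}$. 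The principal symbol in the normal direction is then the explicit $(1+\tfrac{n(n-1)}{2})\times(1+\tfrac{n(n-1)}{2})$ matrix $\mathrm{diag}(1,-u/2,\ldots,-u/2)$ up to reordering, visibly invertible since $u=u_{\max}>0$. This buys the paper a clean, gauge-free uniqueness statement: Gaussian normal coordinates are geometrically determined by $\Sigma$, so no harmonic-map or wave-gauge construction is needed, and the difficulty you flag at the end simply does not arise.

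For the global extension the paper does not appeal to an abstract analytic-continuation principle; it re-applies the same Cauchy--Kovalevskaya argument on successive level sets $\{u=a\}$ (still non-characteristic as long as $a>0$) to push the local isometry all the way to the horizons. This is more concrete than your continuation step and sidesteps the topology/monodromy worries you mention. Your approach is workable, but the paper's Gaussian-coordinate formulation on the Riemannian static system is both shorter and avoids precisely the two issues you single out as the hard parts.
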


To appreciate the above statement,   recall  that  Birmingham-Kottler (BK) metrics can be written in the form
\begin{equation}\label{9VII19.1}
\gamma = -u^2 dt^2 + \frac{dr^2}{u^2} + r^2 g_\Sigma
 \,,
 \quad
  u^2 = 1 -r^2 - \frac{2m}{r^{n-2}}
  \,,
\end{equation}
with $t \in \R$,
where $m$ is a constant and $g_\Sigma$ is a Riemannian metric on a  manifold $\Sigma$ such that
\begin{equation}\label{9VII19.2}
\Ric(g_\Sigma) = (n-2) g_\Sigma
\,.
\end{equation}
Strictly speaking, $\gamma$ is only well defined away from the zeros of $u$, but a standard procedure allows one to extend the metric $\gamma$ across these to obtain a smooth Lorentzian manifold with spatial topology $M=\R\times \Sigma$. The zero-level set of $u$ becomes a collection of Killing horizons in the extended spacetime. It is in fact possible to periodically identify the $\R$ factor of $M$ to obtain a spacetime with a finite number of Killing horizons.
The resulting spacetime manifolds equipped with the metric $\gamma$ of \eqref{9VII19.1} will be referred to as the \emph{BK spacetimes}.
In our terminology, the Schwarzschild-de Sitter spacetimes are  a special case of the BK spacetimes, where the Riemannian manifold $(\Sigma,g_\Sigma)$ is taken to be a sphere with the canonical metric. In order for the BK spacetime to give a static solution, one needs the lapse function to be well defined. It is clear from formula~\eqref{9VII19.1}
that this happens only if the quantity $1-r^2-2mr^{2-n}$ is nonnegative for some positive values of $r$. This limits the choice of the parameter $m$ to the interval $0\leq m<\mmax$, where $\mmax$ is a constant depending on $n$ that can be computed explicitly (see~\eqref{eq:maxmass}). For any $0< m<\mmax$, the BK spacetime is static in the region $r_-(m)<r<r_+(m)$, where $0<r_-(m)<r_+(m)$ are the two positive roots of $1-r^2-2mr^{2-n}$. The corresponding hypersurfaces $\{r=r_-(m)\}$ and $\{r=r_+(m)\}$ are Killing horizons of the BK spacetime.
Recall also that the Nariai spacetime can be defined as the limit as $m\to\mmax$ of the static BK spacetime. An explicit expression for the metric is
\begin{equation}\label{eq:nariai}
\gamma = -\sin^2(r) dt^2 + \frac{1}{n} dr^2 + \frac{n-2}{n} g_\Sigma
 \,,
\end{equation}
where again $g_\Sigma$ is a Riemannian metric on $\Sigma$ satisfying~\eqref{9VII19.2}.

This paper is structured as follows. In Section~\ref{sec:Loj} we will prove the reverse {\L}ojasiewicz inequality (Theorem~\ref{thm:loj_in}). In Section~\ref{sec:MAX} we will analyze the regularity properties of the extremal set of an analytic function with controlled Laplacian. More precisely, in Theorem~\ref{thm:expansion_u} we show how to expand a function in a neighborhood of the set of the maximum (or minimum) points. Building on this, under the hypothesis that the Hessian does not vanish, we show in Theorem~\ref{thm:SigmaC1} and Corollary~\ref{cor:smooth} that the $(n-1)$-dimensional part of the extremal level set is a real analytic hypersurface without boundary.
It is worth remarking that the results in Sections~\ref{sec:Loj} and~\ref{sec:MAX} are not exclusive of the static realm,
but they hold more generally for large classes of real-valued functions. In fact, other recent applications
of these same properties have appeared in~\cite{Bal_Bat_Rib}, where they have been used to study critical metrics
of the volume functional, and in~\cite{Ago_Bor_Maz} where the classical torsion
problem is discussed.

From Section~\ref{geometric criterion} we start focusing exclusively on static solutions. We show that the estimates given by Theorem~\ref{thm:SigmaC1} allow to trigger the Cauchy-Kovalevskaya Theorem~\ref{thm:CK}, leading to a proof of Theorem~\ref{T9VII19.1} (see Theorem~\ref{thm:CKappli}).
Section~\ref{sec:BHU} is devoted for the most part to the statement and proof of Theorem~\ref{thm:main_ineq_SD}, which is a quite general result stating that if an outer region is next to an inner region, then the virtual mass of the outer region is necessarily greater than or equal to the one of the inner region. This theorem is supplemented by a rigidity statement in the case of equality of the virtual masses. In dimension $3$, it is possible to combine this rigidity statement with Theorem~\ref{T9VII19.1}, leading to the proof of
Theorem~\ref{thm:uniqueness}.
Finally, in Section~\ref{12VII19.1} we discuss how to exploit the Cauchy-Kovalevskaya scheme proposed in Section~\ref{geometric criterion} in order to produce local static solutions.

\section{Reverse {\L}ojasiewicz Inequality}
\label{sec:Loj}

%%%%%%%%%%%%%%%%%%%%%%%%%%%%%%%%%%%%%%%%%%%%%%%
%%%%%%%%%%%%%%%%%%%%%%%%%%%%%%%%%%%%%%%%%%%%%%%

Let $(M,g)$ denote a smooth $n$-dimensional Riemannian manifold, possibly with boundary, $n\geq 2$. Given a smooth function $f:M\to\R$, we will denote by $\fmax$ the maximum value of $f$, when achieved, and by
$$
{\rm MAX}(f)\,=\,\{x\in M\,:\,f(x)\,=\,\fmax\}
$$
the set of the maxima of $f$, when nonempty.
We will assume that ${\rm MAX}(f) $ does not meet the boundary of $M$, if there is one.

%This section is a polished and improved version of~\cite[Subsection~1.1.5]{Bor-thesis}.
We start by recalling the following classical result by
{\L}ojasiewicz, concerning the behaviour of an analytic function near a critical point.

\begin{theorem}[{{\L}ojasiewicz inequality~\cite[Th{\'e}or{\`e}me~4]{Lojasiewicz_1},~\cite{Kur_Par}}]
\label{thm:loj_in}
Let $(M,g)$ be a Riemannian manifold and let $f:M\to\R$ be an analytic function. Then for every point $p\in M$ there exists a neighborhood $U_p\ni p$ and real numbers $c_p>0$ and $1\leq\theta_p<2$ such that for every $x\in U_p$ it holds
\begin{equation}
\label{eq:loj_in}
|\na f|^2(x)\,\geq\,c_p\left|f(x)-f(p)\right|^{\theta_p}\,.
\end{equation}
\end{theorem}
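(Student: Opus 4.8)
The plan is to prove this through the ``\L ojasiewicz profile'' of $f$, using the standard toolbox of (semi)analytic and subanalytic geometry — local finiteness of semianalytic sets, invariance of subanalyticity under proper projections, the Curve Selection Lemma, and Puiseux expansions of one-dimensional subanalytic sets — which I would invoke rather than reprove. Since the statement is local and is unchanged by an analytic change of chart and by subtracting a constant from $f$, I would work in an analytic chart around $p$ and assume $p=0\in\R^n$, $f(0)=0$. If $\na f(0)\ne 0$ the inequality is immediate: after shrinking the chart so that $|\na f|^2\ge a:=\tfrac12|\na f(0)|^2>0$ and $|f|\le 1$ there, one has $|\na f|^2\ge a\ge a\,|f|^{\,\theta_p}$ with $\theta_p=1$. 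So the content is at a critical point, and from now on $\na f(0)=0$; we may also assume $f\not\equiv 0$ near $0$, otherwise \eqref{eq:loj_in} holds trivially.

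The first step is to localise so that \emph{every} critical point of $f$ in a small closed ball $\ove{B_{\rho}}$ lies on $V:=f^{-1}(0)$: indeed $\{\na f=0\}$ is a real-analytic set on which $f$ is locally constant, a semianalytic set has only finitely many connected components in a small ball, and the components carrying a nonzero value of $f$ are closed sets missing $0$ (by continuity of $f$), so they disappear after shrinking $\rho$, leaving only components on which $f\equiv f(0)=0$. Hence $|\na f|$ is strictly positive on each compact level set $\{f=r\}\cap\ove{B_{\rho}}$ with $0<|r|<\ep$, so the \emph{profile}
$$
  \mu(r)\;:=\;\min\bigl\{\,|\na f(x)|^2\ :\ x\in\ove{B_{\rho}},\ f(x)=r\,\bigr\}
$$
is well defined and strictly positive. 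Projecting the semianalytic set $\{(x,r,s):x\in\ove{B_{\rho}},\ f(x)=r,\ s=|\na f(x)|^2\}$ along the compact factor $\ove{B_{\rho}}$ (a proper projection) and passing to the graph of the fibrewise minimum exhibits the graph of $\mu$ as a one-dimensional subanalytic subset of $\R^2$; its Puiseux expansion near $r=0$ gives $a>0$ and $\theta\in\mathbb{Q}_{\ge 0}$ with $\mu(r)=a\,|r|^{\theta}(1+o(1))$ as $r\to 0$ (treating $r>0$ and $r<0$ separately and keeping the larger exponent). Therefore, on a small enough neighbourhood of $0$ and trivially where $f=0$,
$$
  |\na f(x)|^2\;\ge\;\mu\bigl(f(x)\bigr)\;\ge\;\tfrac{a}{2}\,|f(x)|^{\,\theta}.
$$
Since $|f-f(p)|\le 1$ there, the same bound holds with $\theta$ replaced by $\theta_p:=\max(\theta,1)\in[1,2)$ as soon as $\theta<2$ is known; and if $\theta=0$ we are already done.

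So assume $\theta>0$, whence $\mu(r)\to 0$ as $r\to 0$; the decisive point is the bound $\theta<2$. Applying the Curve Selection Lemma to the subanalytic set $A:=\{x\in\ove{B_{\rho}}:f(x)\ne 0,\ |\na f(x)|^2=\mu(f(x))\}$ of profile-minimisers — whose closure contains a point $x_\ast$ with $f(x_\ast)=0$ and $|\na f(x_\ast)|^2=\lim_{r\to 0}\mu(r)=0$, hence $x_\ast$ critical — gives a non-constant real-analytic arc $\gamma$ with $\gamma(0)=x_\ast$ and $|\na f(\gamma(t))|^2=\mu(f(\gamma(t)))$ for $t\in(0,\ep)$. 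Let $m\ge 1$ be the order of vanishing of $\gamma-x_\ast$ at $0$, so $|\gamma'(t)|\asymp t^{\,m-1}$, and let $d<\infty$ be the order of vanishing of $t\mapsto f(\gamma(t))$ (finite since $f(\gamma(t))\ne 0$ for $t>0$); since $\na f(x_\ast)=0$ one has $f(\gamma(t))=O\bigl(|\gamma(t)-x_\ast|^2\bigr)$, hence $d\ge 2m$. Differentiating $f\circ\gamma$ and using $\bigl|\tfrac{\rmd}{\rmd t}f(\gamma(t))\bigr|\le|\na f(\gamma(t))|\,|\gamma'(t)|$,
$$
  t^{\,d-1}\;\asymp\;\Bigl|\tfrac{\rmd}{\rmd t}f(\gamma(t))\Bigr|\;\le\;|\na f(\gamma(t))|\,|\gamma'(t)|\;\asymp\;\sqrt{\mu(f(\gamma(t)))}\;t^{\,m-1}\;\asymp\;t^{\,d\theta/2+m-1},
$$
which forces $d-1\ge d\theta/2+m-1$, i.e.\ $\theta\le 2(d-m)/d<2$ (as $m\ge 1$ and $d<\infty$). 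This finishes the proof.

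The main obstacle is precisely this bound $\theta<2$. It fails for merely smooth functions (for instance $f(x)=e^{-1/x^2}$ in one variable), and securing it uses analyticity twice over: through the subanalyticity of the profile $\mu$, which is what makes the Puiseux expansion available, and through the finiteness of the order of vanishing of $f$ along a real-analytic arc, which is what converts the soft identity $\tfrac{\rmd}{\rmd t}(f\circ\gamma)=\na f(\gamma)\cdot\gamma'$ into a genuine exponent inequality. Everything else is routine, the only slightly delicate step being the reduction that guarantees that nonzero critical values of $f$ do not accumulate at $p$, which is where the local structure of real-analytic sets is used.
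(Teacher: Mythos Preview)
The paper does not prove this theorem: it is stated as a classical result with citations to {\L}ojasiewicz and Kurdyka--Parusi\'nski, and the ensuing discussion only comments on it before moving to the paper's own contribution (the \emph{reverse} {\L}ojasiewicz inequality, Theorem~\ref{thm:rev_loj}). So there is no ``paper's proof'' to compare against.

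Your argument is a correct outline of a standard proof via subanalytic geometry, essentially in the spirit of the Kurdyka--Parusi\'nski treatment you cite. The reduction to the case where all nearby critical values equal $f(p)$ is sound (local finiteness of components of the analytic set $\{\na f=0\}$, plus constancy of $f$ on each component via arcwise connectedness). The profile $\mu(r)=\min_{\{f=r\}\cap\ove{B_\rho}}|\na f|^2$ has subanalytic graph because the relevant projection is proper and the fibrewise minimum of a bounded subanalytic family is subanalytic; the Puiseux expansion then gives the exponent $\theta$. The crucial bound $\theta<2$ is obtained exactly as it should be: Curve Selection produces an analytic arc of minimisers ending at a critical point $x_\ast$ with $f(x_\ast)=0$, and comparing the orders of $f\circ\gamma$, $|\na f|\circ\gamma$, and $|\gamma'|$ yields $\theta\le 2(1-m/d)<2$; the inequality $d\ge 2m$ comes from $f(x_\ast)=0$, $\na f(x_\ast)=0$. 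The final adjustment to $\theta_p=\max(\theta,1)$ using $|f|\le 1$ is fine.

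Two minor remarks. First, when you assert that $f$ is locally constant on $\{\na f=0\}$, you are implicitly using that connected semianalytic germs are arcwise connected by analytic arcs; it is worth saying so, since $\{\na f=0\}$ need not be a manifold. Second, the Puiseux step really gives, a priori, two exponents $\theta_\pm$ for $r\to 0^\pm$; your parenthetical ``keeping the larger exponent'' is the right fix, and the Curve Selection argument should be run once on each side to bound both below $2$.
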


\noindent
Let us make some comments on this result.
First, we observe that the above theorem is only relevant when $p$ is a critical point, as otherwise the proof is trivial.
Another observation is that one can always set $c_p=1$ in~\eqref{eq:loj_in}, at the cost of increasing the value of $\theta_p$ and restricting the neighborhood $U_p$. Nevertheless, the inequality is usually stated as in~\eqref{eq:loj_in}, because one often wants to choose the optimal $\theta_p$.
%However, in the case where the Hessian of $f$ at $p$ is nondegenerate, then it can be shown that the optimal $\theta_p$ is $1/2$.
Let us also notice, in particular, that the above result implies the inequality
$$
|\na f|(x)\,\geq\,\left|f(x)-f(p)\right|
$$
 in a neighborhood of any point of our manifold.

The gradient estimate~\eqref{eq:loj_in} has found important applications in the study of gradient flows, as it allows to control the behaviour of the flow near the critical points. The validity of the {\L}ojasiewicz Inequality has been extended to semicontinuous subanalytic functions in~\cite{Bol_Dan_Lew}, and a generalized version of~\eqref{eq:loj_in} has been developed by Kurdyka~\cite{Kurdyka} for larger classes of functions.
An infinite-dimensional version of~\eqref{eq:loj_in} has been proved by Simon~\cite{Simon}, who used it to study the asymptotic behaviour of parabolic equations near critical points. A {\L}ojasiewicz-like inequality for noncompact hypersurfaces has been discussed in~\cite{Col_Min_2}, where it is exploited as the main technical tool to prove the uniqueness of blow-ups of the mean curvature flow. We refer the reader to~\cite{Bol_Dan_Ley_Maz,Col_Min_1,Fee_Mar} and the references therein
for a thorough discussion of various versions of the {\L}ojasiewicz--Simon inequality, as well as for its applications.

On the other hand, to the authors' knowledge, the opposite inequality has not been discussed yet.
In this section we prove an analogous estimate from above of the gradient near the critical points.
Before stating the result, let us make a preliminary observation. Suppose that we are given a Riemannian manifold $(M,g)$ and a function $f\in\mathscr{C}^\infty(M)$. Let $p\in M$ be a critical point of $f$. If we restrict $f$ to a curve $\gamma$ such that $\gamma(0)=p$ and $\dot\gamma(0)=X$, for some unit vector field $X\in T_p M$, we have
$$
f\circ\gamma(t)\,=\,f(p)\,+\,\frac{\nana f(X,X)}{2}\,t^2\,+o(t^2)\,,
$$
from which we compute
\begin{equation}
\label{eq:simple_case}
\frac{\left(\frac{\pa}{\pa t} (f\circ\gamma)_{|_{t=\tau}}\right)^2}{f(p)-f\circ\gamma(\tau)}\,=\,-\,2\,\frac{\big(\nana f(X,X)\big)^2\,\tau^2\,+\,o(\tau^2)}{\nana f(X,X)\,\tau^2\,+\,o(\tau^2)}\,.
\end{equation}
In particular, under the assumption that $\nana f(X,X)\neq 0$ at $p$, we have that the left hand side of~\eqref{eq:simple_case} is locally bounded. As a consequence, we immediately obtain that the inequality
$$
{|\na f|^2}\,\,\leq\,\,c\,|f(p)-f|
$$
holds in a neighborhood of $p$ for some $c\in\R$, provided we assume that $\nana f(p)(X,X)\neq 0$ for every $X\in T_p M$.
The next theorem tells us that a slightly weaker bound is in force at the maximum (or minimum) points of $f$, without any assumptions on the Hessian.

\begin{theorem}[Reverse {\L}ojasiewicz Inequality]
\label{thm:rev_loj}
Let $(M,g)$ be a
Riemannian manifold, let $f:M\to\R$ be a smooth function and let $\Sigma$ be a connected component of ${\rm MAX}(f)$. If $\Sigma$ is compact, then for every $\theta<1$, there exists an open neighborhood $\Omega\supset \Sigma$ and a real number $c>0$ such that for every $x\in \Omega$ it holds
\begin{equation}
\label{eq:rev_loj}
|\na f|^2(x)\,\leq\,c\left[\fmax-f(x)\right]^{\theta}\,.
\end{equation}
\end{theorem}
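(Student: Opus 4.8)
The plan is to prove directly the stronger estimate corresponding to the borderline exponent $\theta=1$, namely
\[
|\na f|^2(x)\,\le\,2\,C_0\,\big[\fmax-f(x)\big]
\]
on a suitable neighborhood of $\Sigma$, where $C_0$ is a local supremum bound for the Hessian of $f$. From this, \eqref{eq:rev_loj} follows for every $\theta<1$: on a relatively compact neighborhood of $\Sigma$ the increment $\fmax-f$ is bounded by some $D<\infty$, so that there $\fmax-f=(\fmax-f)^{1-\theta}(\fmax-f)^{\theta}\le D^{1-\theta}(\fmax-f)^{\theta}$. The mechanism behind the borderline estimate is the one-variable Taylor argument already appearing in the preliminary observation that precedes the statement, \emph{except} that the one-sided control $f\le\fmax$ now plays the role of the nondegeneracy hypothesis on $\nana f$: because $\Sigma$ consists of global maxima, nothing about the Hessian needs to be known beyond a local bound on its size.

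Concretely, I would first fix, using compactness of $\Sigma$ and the hypothesis $\Sigma\cap\pa M=\emptyset$, relatively compact open sets $\Omega,\Omega'$ with $\Sigma\subset\Omega\subset\overline{\Omega}\subset\Omega'\subset\overline{\Omega'}\subset M\setminus\pa M$, and set $C_0:=\sup_{\overline{\Omega'}}|\nana f|<\infty$. A routine continuity and compactness argument produces $\ep_0>0$ such that every unit-speed geodesic issuing from a point of $\overline{\Omega}$ is defined on $[0,\ep_0]$ and remains inside $\overline{\Omega'}$; shrinking $\Omega$ around $\Sigma$ and using that $\na f$ vanishes on $\Sigma\subset{\rm MAX}(f)$ together with the continuity of $\na f$, I may additionally arrange that $|\na f|\le C_0\,\ep_0$ on $\Omega$.

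Next, fix $x\in\Omega$ with $\na f(x)\neq 0$ (otherwise there is nothing to prove), let $\gamma$ be the geodesic with $\gamma(0)=x$ and $\dot\gamma(0)=\na f(x)/|\na f(x)|$, and put $h(t):=f(\gamma(t))$. Then $h(0)=f(x)$, $h'(0)=|\na f(x)|$ and $h''(t)=\nana f(\dot\gamma(t),\dot\gamma(t))$, so that $|h''|\le C_0$ along $\gamma$. With $t_\ast:=|\na f(x)|/C_0\le\ep_0$, the geodesic is defined at $t_\ast$ and $\gamma(t_\ast)\in\overline{\Omega'}\subset M$, so Taylor's formula with Lagrange remainder gives
\[
\fmax\,\ge\,h(t_\ast)\,\ge\,h(0)+h'(0)\,t_\ast-\tfrac{C_0}{2}\,t_\ast^2\,=\,f(x)+\frac{|\na f(x)|^2}{2\,C_0}\,,
\]
which is exactly the borderline estimate; \eqref{eq:rev_loj} then follows by the interpolation recorded in the first paragraph.

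The only point requiring genuine care is the uniform selection of $\Omega$, $C_0$ and $\ep_0$ in the second step: one must have a single Hessian bound and a single ``escape time'' working simultaneously along all short geodesics emanating from points near the --- a priori merely closed, possibly irregular --- set $\Sigma$. This is precisely where the compactness of $\Sigma$ and the vanishing of $\na f$ on it enter, and it is the reason the compactness hypothesis on $\Sigma$ cannot be dropped. Everything else reduces to the one-variable computation above.
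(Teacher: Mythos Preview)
Your argument is correct and in fact yields the borderline exponent $\theta=1$ with the explicit constant $c=2C_0$, which is strictly stronger than the stated conclusion. The paper's own proof is entirely different: it sets $w=|\na f|^2-c(\fmax-f)^\theta$, computes that $w$ satisfies an elliptic inequality of the form $\De w-\alpha\,w\ge 0$ with $\alpha>0$ once $c$ is chosen large enough, and then applies the Weak Maximum Principle on the collars $\Omega\setminus B_\ep(\Sigma)$, letting $\ep\to 0$. That mechanism genuinely needs $\theta<1$, since the dominant positive term on the right-hand side is $\theta(1-\theta)F^2$ with $F=c(\fmax-f)^{-(1-\theta)}$, and this vanishes when $\theta=1$.

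Your route is more elementary and sharper here; the key observation---that the one-sided bound $f\le\fmax$, available precisely because $\Sigma$ consists of global maxima, fully replaces the nondegeneracy hypothesis $\nana f(X,X)\neq 0$ in the paper's preliminary discussion---is worth stating explicitly. What the paper's PDE approach buys is a template that recurs later (the same ``auxiliary subsolution plus maximum principle'' scheme drives the gradient estimate in Proposition~\ref{pro:gradest}), and it relies only on bounds for $\De f$ and $\De|\na f|^2$ rather than a pointwise Hessian bound, which can matter in other settings. One minor point of care in your write-up: the degenerate case $C_0=0$ should be disposed of separately (it forces $\na f\equiv 0$ near $\Sigma$, so the estimate is trivial), since otherwise $t_\ast$ is undefined.
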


\begin{proof}
Let us start by defining the function
$$
w\,=\,|\na f|^2-c\,(\fmax-f)^\theta\,,
$$
where $c>0$ is a constant that will be chosen conveniently later. We compute
$$
\na w\,=\,\na|\na f|^2\,+\,c\,\theta\, (\fmax-f)^{-(1-\theta)}\,\na f\,,
$$
and taking the divergence of the above formula
\begin{align*}
\De w\,
&=\,\De|\na f|^2\,+\,c\,\theta \, \frac{\De f}{(\fmax-f)^{1-\theta}}\,+\,c\,\theta\,(1-\theta)\, \frac{|\na f|^2}{(\fmax-f)^{2-\theta}}
\\
&=\,\De|\na f|^2\,+\,c\,\theta \, \frac{\De f}{(\fmax-f)^{1-\theta}}\,+\,c\,\theta\,(1-\theta) \,\frac{w}{(\fmax-f)^{2-\theta}}\,+\,c^2\theta\,(1-\theta)\, \frac{1}{(\fmax-f)^{2-2\theta}}\,,
\end{align*}
where in the second equality we have used $|\na f|^2=w+c\,(\fmax-f)^{\theta}$. We have obtained the following identity
\begin{equation}
\label{eq:ellipticine}
\De w\,-\,c\,\theta\,(1-\theta)\,\frac{1}{(\fmax-f)^{2-\theta}}\,w\,=\,\theta\, F\,\left[\De f\,+\,(1-\theta)\,F\right]\,+\,\De|\na f|^2\,,
\end{equation}
where
$$
F=\frac{c}{(\fmax-f)^{1-\theta}}\,.
$$
Fix now a connected open neighborhood $\Omega$ of $\Sigma$ with smooth boundary $\pa\Omega$. Since $\Sigma$ is compact, we can suppose that $\overline{\Omega}$ is compact as well.
%Now fix $0<\eta<\fmax$ regular value of $f$ (of course such an $\eta$ exists, since Sard's Theorem tells us that almost all values of $f$ are regular) and let $V_\eta$ be the connected component of the set $\{\fmax-f\leq\eta\}$ that contains $\Sigma$. Since $\Sigma$ is compact, we can choose $\eta$ small enough so that $V_\eta$ is compact.
By definition, we have
$$
F\,\geq\,\frac{c}{\max_{\overline{\Omega}}(\fmax-f)^{1-\theta}}\,,\quad\hbox{on } \overline{\Omega}\,.
$$
In particular, increasing the value of $c$ if necessary, we can make $F$ as large as desired.
Since $\De f$ and $\De|\na f|^2$ are continuous and thus bounded in $\overline{\Omega}$, and since $0<\theta<1$, it follows that, for any $c$ big enough, we have
$$
\theta\, F\,[(1-\theta)\,F\,+\,\De f]\,+\,\De|\na f|^2\,\geq\,0
$$
on the whole $\overline{\Omega}$.
For such values of $c$, the right hand side of~\eqref{eq:ellipticine} is nonnegative, that is,
\begin{equation}
\label{eq:max_pr}
\De w-\frac{\theta\,(1-\theta)\,c}{(\fmax-f)^{2-\theta}}\,w\,\geq\,0\,,\quad\hbox{ in }\,\overline{\Omega}\,.
\end{equation}
Notice that the coefficient that multiplies $w$ in~\eqref{eq:max_pr} is negative, as $f\leq\fmax$ and $0<\theta<1$.
Therefore, we can apply the Weak Maximum Principle~\cite[Corollary~3.2]{Gil_Tru}
to $w$ in any open set where $w$ is $\mathscr{C}^2$ -- that is, on any open subset of $\Omega$ that does not intersect $\Sigma$.
For this reason, it is convenient to choose a number $\ep>0$ small enough so that the tubular neighborhood
$$
B_\ep(\Sigma)\,=\,\{x\in M\,:\,d(x,\Sigma)<\ep\}
$$
is contained inside $\Omega$, and consider the set $\Omega_\ep=\Omega\setminus \overline{B_\ep(\Sigma)}$.
%where $\ep>0$ is a real number small enough so that the points of  regular value of $f$ (again we are using Sard's Theorem). Since $\ep$ and $\eta$ are regular values of $f$, it follows that $\pa V_{\ep\eta}=\{f=\fmax-\eta\}\cup\{f=\fmax-\ep\}$ is smooth.
Up to increasing the value of $c$ if needed, we can suppose
$$
c\,\geq\,\frac{\max_{\pa\Omega}|\na f|^2}{\min_{\pa\Omega}(\fmax-f)^\theta}\,\geq\,\max_{\pa\Omega}\frac{|\na f|^2}{(\fmax-f)^\theta}\,,
$$
so that $w\leq 0$ on $\pa\Omega$. Now we apply the Weak Maximum Principle to the function $w$ on the open set $\Omega_\ep$, obtaining
$$
w\,\leq\, \max_{\pa \Omega_\ep} (w)\,=\,\max\left\{\max_{\pa\Omega}(w)\,,\,\max_{\pa B_\ep(\Sigma)} (w)\right\}\,\leq\,\max\left\{ 0\,,\,\max_{\pa B_\ep(\Sigma)} (w)\right\}.
$$
Recalling the definition of $w$, taking the limit as $\ep\to 0$, from the continuity of $f$ and $|\na f|$, it follows that
$$
\lim_{\ep\to 0}\max_{\pa B_\ep(\Sigma)} (w)\,=\,0\,,
$$
hence we obtain $w\leq 0$ on $\Omega$. Translating $w$ in terms of $f$, we have obtained that the inequality
$$
|\na f|^2\,\leq\, c \,(\fmax-f)^\theta
$$
holds in $\Omega$, which is the neighborhood of $\Sigma$ that we were looking for.
\end{proof}

\noindent
In particular, we have the following simple refinement:

\begin{corollary}
\label{cor:rev_loj}
Under the hypotheses of Theorem~\ref{thm:rev_loj}, for any $p\in\Sigma$ and any $\alpha<1$, we have
$$
\lim_{f(x)\neq \fmax,\ x\to p}\,\frac{|\na f|^2(x)}{[\fmax-f(x)]^{\alpha}}\,=\,0\,.
$$
\end{corollary}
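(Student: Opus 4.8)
The plan is to deduce the corollary from Theorem~\ref{thm:rev_loj} by a simple interpolation between exponents. Given $\alpha<1$, I would first fix an auxiliary exponent $\theta$ with $\alpha<\theta<1$; this choice is available precisely because Theorem~\ref{thm:rev_loj} grants the estimate~\eqref{eq:rev_loj} for \emph{every} $\theta<1$, and in particular for one that is strictly larger than the prescribed $\alpha$.

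Applying Theorem~\ref{thm:rev_loj} with this $\theta$ yields an open neighborhood $\Omega\supset\Sigma$ and a constant $c>0$ such that $|\na f|^2(x)\le c\,[\fmax-f(x)]^{\theta}$ for all $x\in\Omega$. Dividing by $[\fmax-f(x)]^{\alpha}$, which is positive on the set $\{x:\ f(x)\neq\fmax\}$, gives for every such $x\in\Omega$ the two-sided bound
\[
0\,\le\,\frac{|\na f|^2(x)}{[\fmax-f(x)]^{\alpha}}\,\le\,c\,[\fmax-f(x)]^{\theta-\alpha}\,.
\]

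Finally I would let $x\to p$ along the set where $f\neq\fmax$. Since $p\in\Sigma\subset{\rm MAX}(f)$ and $f$ is continuous, $\fmax-f(x)\to 0$, and since the exponent $\theta-\alpha$ is strictly positive the right-hand side tends to $0$; the left-hand side is nonnegative, so squeezing gives the claimed limit. There is no genuine obstacle here: the only place where one uses the full strength of Theorem~\ref{thm:rev_loj}—namely the validity of~\eqref{eq:rev_loj} for a range of exponents rather than a single fixed one—is in selecting $\theta$ strictly above $\alpha$, which is exactly what makes the surviving power $\theta-\alpha$ positive and hence forces the quotient to vanish in the limit.
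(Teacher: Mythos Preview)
Your proposal is correct and follows exactly the same approach as the paper: pick $\theta$ with $\alpha<\theta<1$, apply the reverse {\L}ojasiewicz inequality to bound $|\na f|^2/[\fmax-f]^{\alpha}$ by $c\,[\fmax-f]^{\theta-\alpha}$, and let $x\to p$. The paper's argument is written more tersely but is identical in substance.
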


\begin{proof}
From Theorem~\ref{thm:rev_loj} it follows that we can choose constants $\alpha<\theta<1$ and $c>0$ such that
$$
\frac{|\na f|^2}{[\fmax-f]^\alpha}\,\leq\,
\frac{c\,[\fmax-f]^\theta}{[\fmax-f]^\alpha} \,=\, c \,[\fmax-f]^{\theta-\alpha}\,,
$$
and, since we have chosen $\theta>\alpha$, the right hand side goes to zero as we approach $p$. This proves the claim.
\end{proof}

%%%%%%%%%%%%%%%%%%%%%%%%%%%%%%%%%%%%%%%%%%%%%%%
%%%%%%%%%%%%%%%%%%%%%%%%%%%%%%%%%%%%%%%%%%%%%%%

\section{Regularity of
the extremal level sets}
\label{sec:MAX}

%%%%%%%%%%%%%%%%%%%%%%%%%%%%%%%%%%%%%%%%%%%%%%%
%%%%%%%%%%%%%%%%%%%%%%%%%%%%%%%%%%%%%%%%%%%%%%%

The well known {\L}ojasiewicz Structure Theorem (established
  in~\cite{Lojasiewicz_2}, see also~\cite[Theorem~6.3.3]{Kra_Par}) states that the set of the critical points ${\rm Crit}(f)$ of a real analytic function $f$ is a (possibly disconnected) stratified analytic subvariety whose strata have dimensions between $0$ and $n-1$. In particular, it follows that the set ${\rm MAX}(f)\subseteq{\rm Crit}(f)$ of the maxima of $f$ can be decomposed as follows
\begin{equation}
\label{eq:deco_Loja}
{\rm MAX}(f)\,=\,\Sigma^0\sqcup \Sigma^1\sqcup\dots\sqcup \Sigma^{n-1}\,,
\end{equation}
where $\Sigma^i$ is a finite union of $i$-dimensional real analytic submanifolds, for every $i=0,\dots,n-1$. This means that, given a point $p\in\Sigma^i$, there exists a neighborhood $p\in\Omega\subset M$ and a real analytic diffeomorphism $\phi:\Omega\to\R^n$ such that
$$
\phi(\Omega\cap\Sigma^i)\,=\,L\cap \phi(\Omega)\,,
$$
for some $i$-dimensional linear space $L\subset\R^n$. In particular, the set $\Sigma^{n-1}$ is a real analytic hypersurface and is usually referred to as the {\em top stratum} of ${\rm MAX}(f)$.
%The well known {\L}ojasiewicz Structure Theorem (proven in~\cite{Lojasiewicz_2}, see also~\cite[Theorem~6.3.3]{Kra_Par}) states that the set of the critical points ${\rm Crit}(f)$ of an analytic function $f$ forms a stratified submanifold. More precisely
%$$
%{\rm Crit}(f)\,=\,\Sigma^{n-1}\sqcup\Sigma^{n-2}\sqcup\dots\sqcup\Sigma^0\,,
%$$
%where, for every $i=1,\dots n-1$, $\Sigma^i$ is an analytic submanifold of dimension $i$. We remind that, by analytic submanifold, we mean....
%
%We are particularly interested in the set ${\rm MAX}(f)\subseteq{\rm Crit}(f)$ of the maxima of $f$. {\L}ojasiewicz Structure Theorem tells us that ${\rm MAX}(f)$ is a stratified submanifold of dimension $\leq n-1$. If  ${\rm MAX}(f)$ has precisely dimension $n-1$, then we will call {\em top stratum} of ${\rm MAX}(f)$ the $(n-1)$-dimensional part of ${\rm MAX}(f)$ that is an analytic hypersurface.
In this section we show that we can get much more information about the behaviour of our function $f$ around the maximum points that belong to the top stratum.

\begin{theorem}
\label{thm:expansion_u}
Let $(M,g)$ be a real analytic Riemannian manifold, let $f:M\to\R$ be a real analytic function and let $p\in{\rm MAX}(f)$ be a point in the top stratum of ${\rm MAX}(f)$. Let $\Omega$ be a small neighborhood of $p$ such that $\Sigma=\Omega\cap{\rm MAX}(f)$ is contained in the top stratum and $\Omega\setminus\Sigma$ has two connected components $\Omega_+,\Omega_-$. We define the signed distance to $\Sigma$ as
$$
r(x)\,=\,
\begin{cases}
+ \, d(x,\Sigma)\,,   & \text{ if } x\in \overline{\Omega}_+\,,
\\
- \, d(x,\Sigma)\,,  & \text{ if } x\in \overline{\Omega}_-\,.
\end{cases}
$$
Then there is a real analytic chart $(r,\vartheta)=(r,\vartheta^1,\dots,\vartheta^{n-1})$ with respect to which $f$ admits the following expansion:
\begin{equation}
\label{eq:expansion_u_final_-1}
f(r,\vartheta)\,=\,\fmax\,+\,\frac{\De f(0,\vartheta)}{2}\,r^2\,+r^3\,F(r,\vartheta)\,,
\end{equation}
where $F$ is a real analytic function.
If we also assume that $\De f=-\ffi(f)$, then
%\begin{comm}
%SCEGLIERE QUALE DELLE DUE FORMULE LASCIARE (SE SI LASCIA LA SECONDA, CAMBIARE L'ULTIMA FRASE DEL TEOREMA E LA FORMULA PER $\pa\overline{\HHH}/\pa r$ A FINE DIMOSTRAZIONE ACCORDINGLY):
\begin{multline}
\label{eq:expansion_f_final}
f\,=\,\fmax\,-\,\frac{\ffi(\fmax)}{2}\,r^2\,+\,\frac{\ffi(\fmax)}{6}\,\HHH\,r^3
\\
\,-\,\frac{\ffi(\fmax)}{24}\left[|\hhh|^2\,+\,2\,\HHH^2\,+\,\RRR\,-\,\RRR^{\Sigma}\,-\,\dot\ffi(\fmax)\right]r^4\,+\,r^5\,G(r,\vartheta)\,,
\end{multline}
where $G$ is a real analytic function.
Here we have denoted by $\dot\ffi$ the derivative of $\ffi$ with respect to $f$, by $\HHH,\hhh$ the mean curvature and second fundamental form of $\Sigma$ with respect to the normal pointing towards $\Omega_+$, and by $\RRR,\RRR^\Sigma$ the scalar curvatures of $M$ and $\Sigma$.
\end{theorem}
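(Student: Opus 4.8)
The plan is to work in a real analytic Fermi (normal) chart adapted to $\Sigma$, reduce both expansions to a Taylor development in the normal variable $r$, and — for the second expansion — pin down the coefficients by repeatedly differentiating $\De f=-\ffi(f)$ in these coordinates. First I would set up the chart. Since $g$ and $\Sigma$ are real analytic, the normal exponential map of $\Sigma$ is real analytic and, after shrinking $\Om$, a diffeomorphism onto a tubular neighborhood of $\Sigma$; composing its inverse with a real analytic chart $\vartheta=(\vartheta^1,\dots,\vartheta^{n-1})$ on $\Sigma$ produces a real analytic chart $(r,\vartheta)$ in which the first coordinate is exactly the signed distance to $\Sigma$ and $g=dr^2+g_{ij}(r,\vartheta)\,d\vartheta^i d\vartheta^j$. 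In this chart $\De f=\pa_r^2 f+H(r,\vartheta)\,\pa_r f+\De^{(r)}f$, where $H(r,\cdot)=\pa_r\log\sqrt{\det g(r,\cdot)}$ is the mean curvature of the slice $\{r=\mathrm{const}\}$ with respect to $\pa_r$ and $\De^{(r)}$ is the Laplacian of the metric induced on that slice.

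Next, for \eqref{eq:expansion_u_final_-1}, write $f(r,\vartheta)=\fmax+\sum_{k\ge 1}f_k(\vartheta)\,r^k$, real analytic in $(r,\vartheta)$, with $f_0\equiv\fmax$ because $\Sigma\subset{\rm MAX}(f)$. Every point of $\Sigma$ is a global maximum of $f$, hence a critical point, so $\pa_r f|_{r=0}=0$ and $f_1\equiv 0$. Evaluating $\De f=\pa_r^2 f+H\,\pa_r f+\De^{(r)}f$ at $r=0$ and using $\pa_r f|_{r=0}=0$ together with $\De^{(0)}(\fmax)=0$ gives $\pa_r^2 f|_{r=0}=\De f(0,\vartheta)$, hence $f_2(\vartheta)=\tfrac12\De f(0,\vartheta)$. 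Consequently $h:=f-\fmax-\tfrac12\De f(0,\vartheta)\,r^2$ is real analytic in $(r,\vartheta)$ and vanishes to order at least $3$ in $r$, so the real analytic division lemma gives $h=r^3F$ with $F$ real analytic — this is \eqref{eq:expansion_u_final_-1}.

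For \eqref{eq:expansion_f_final} I would now impose $\De f=-\ffi(f)$. On $\Sigma$ this reads $\De f(0,\vartheta)=-\ffi(\fmax)$, a constant, so $f_2\equiv-\ffi(\fmax)/2$ and in particular $\De^\Sigma f_2=0$. To get $f_3$ and $f_4$ I expand the remaining ingredients: $H=H_0+H_1 r+O(r^2)$ with $H_0=\HHH$ and, from the Riccati equation for the shape operator along the normal geodesics, $H_1=-|\hhh|^2-\Ric(\pa_r,\pa_r)$ on $\Sigma$; and $\De^{(r)}=\De^\Sigma+r\,\De^{[1]}+O(r^2)$, where the operators $\De^{[j]}$, $j\ge 1$, have no zeroth order part and hence annihilate the constants $f_0$ and $f_2$. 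Matching the coefficients of $r^0,r^1,r^2$ in $\pa_r^2 f+H\,\pa_r f+\De^{(r)}f=-\ffi(f)$ — the right-hand side expanded via $\pa_r[\ffi(f)]=\dot\ffi(f)\,\pa_r f$, $\pa_r^2[\ffi(f)]=\ddot\ffi(f)(\pa_r f)^2+\dot\ffi(f)\,\pa_r^2 f$, and $\pa_r f|_{r=0}=0$ — yields successively $f_2=-\ffi(\fmax)/2$, then $6f_3+2H_0 f_2=0$, then $12 f_4+3H_0 f_3+2H_1 f_2=\tfrac12\dot\ffi(\fmax)\,\ffi(\fmax)$. Solving gives $f_3=\tfrac16\HHH\,\ffi(\fmax)$ and an expression for $f_4$ still containing $\Ric(\pa_r,\pa_r)$; eliminating it through the traced Gauss equation $2\Ric(\pa_r,\pa_r)=\RRR-\RRR^\Sigma+\HHH^2-|\hhh|^2$ reorganizes $f_4$ exactly into the $r^4$ coefficient in \eqref{eq:expansion_f_final}. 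The remainder $r^5 G(r,\vartheta)$ follows once more from the real analytic division lemma.

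The argument is not conceptually deep; the work is in conventions and bookkeeping. One must keep the sign conventions for $\hhh$, $\HHH$ and the curvature tensor consistent throughout (normal $\pa_r$ pointing into $\Om_+$), extract correctly the first-order term $H_1$ of the mean curvature of the parallel hypersurfaces from the Riccati equation, and verify that the curvature terms produced in $f_4$ collapse — via the traced Gauss equation — precisely into $|\hhh|^2+2\HHH^2+\RRR-\RRR^\Sigma$. This last cancellation is the single genuinely geometric step; everything else is the analytic-chart construction and order-by-order coefficient matching.
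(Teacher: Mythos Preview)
Your proposal is correct and follows essentially the same route as the paper's own proof: both set up real analytic normal coordinates $(r,\vartheta)$ about $\Sigma$, use an analytic division/factorisation to produce the $r^2$ expansion \eqref{eq:expansion_u_final_-1}, then decompose $\De f=\pa_r^2 f+H\,\pa_r f+\De^{\!\top}f$, differentiate in $r$, invoke the evolution identity $\pa_r H|_{r=0}=-|\hhh|^2-\Ric(\nu,\nu)$, and finish with the traced Gauss equation. The only cosmetic difference is that you obtain the chart directly via the normal exponential map, whereas the paper quotes analyticity of the signed distance and then the Real Analytic Implicit Function Theorem; the subsequent coefficient bookkeeping is identical.
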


\begin{remark}
{\rm
We have formulated Theorem~\ref{thm:expansion_u} in the context of real analytic geometry because of our intended application to the classification of static solutions of Einstein equations.
It is, however, clear
from the proof that the conclusions of  Theorem~\ref{thm:expansion_u} remain true for smooth functions on smooth Riemannian manifolds, with the following modifications:
first, one should  assume from the outset that
 $\Sigma$ is a smooth hypersurface (in which case the existence of a decomposition as in~\eqref{eq:deco_Loja} becomes irrelevant); next, the functions $r$, $F$ and $G$ and the chart $(r,\vartheta)$ are smooth but not necessarily analytic.
 \qed
 }
\end{remark}

\begin{proof}
Let $(x^1,\dots,x^n)$ be a chart centered at $p$, with respect to which the metric $g$ and the function $f$ are real analytic.
From the fact that $p$ belongs to the top stratum of ${\rm MAX}(f)$, it follows that we can choose an open neighborhood $\Omega$ of $p$ in $M$, where the signed distance $r(x)$
is a well defined real analytic function (see for instance~\cite{Kra_Par_smoothdist}, where this result is discussed in full detail in the Euclidean space, however the proofs extend with small modifications to the Riemannian setting). More precisely, we have
$$
r=\phi(x^1,\dots,x^n)\,,
$$
where $\phi$ is a real analytic function. Since $r$ is a signed distance function, we have $|\na r|=1$, which implies in particular that one of the partial derivatives of $\phi$ has to be different from zero. Without loss of generality, let us suppose $\pa\phi/\pa x^1\neq 0$ in a small neighborhood $\Om$ of $p$. As a consequence, we have that the function
$$
U:\,\R^{n+1}\,\to\, \R\,,\qquad U(r,x^1,\dots ,x^n)\,=\,r-\phi(x^1,\dots,x^n)\,.
$$
satisfies $\pa U/\pa x^1=-\pa\phi/\pa x^1\neq 0$ in $\Om$. We can then apply the Real Analytic Implicit Function Theorem (see~\cite[Theorem~2.3.5]{Kra_Par}), from which it follows that there exists a real analytic function $u:\R^n\to\R$ such that
$$
U(r,u(r,x^2,\dots,x^n),x^2,\dots, x^n)=0\,.
$$
In other words, the change of coordinates from $(r,x^2,\dots,x^n)$ to $(x^1,\dots,x^n)$, which is obtained setting $x^1=u(r,x^2,\dots,x^n)$, is real analytic. In particular $f$ is a real analytic function also with respect to the chart $(r,\vartheta)$, where we have set $\vartheta=(\vartheta^1,\dots,\vartheta^{n-1})$ with $\vartheta^i=x^{i+1}$ for $i=1,\dots,n-1$. Since $\Sigma$ coincides with the points where $r=0$ inside $\Omega$, we can apply~\cite[Th\'eor\`eme~3.1]{Malgrange} to get
$$
\fmax-f\,=\,r\,A\,,
$$
where $A=A(r,\vartheta)$ is a nonnegative real analytic function. Furthermore, clearly the function $\fmax-f$ achieves its minimum value $0$ when $r=0$, thus
$$
0\,=\,\frac{\pa}{\pa r}(\fmax-f)_{|_{r=0}}\,=\,\frac{\pa}{\pa r}(r\,A)_{|_{r=0}}\,=\,(A\,+\,r\,\pa A/\pa r)_{|_{r=0}}\,=\,A_{|_{r=0}}\,.
$$
In particular, we can apply~\cite[Th\'eor\`eme~3.1]{Malgrange} again and we find:
\begin{equation}
\label{eq:analytic_expansion_2}
\fmax-f\,=\,r^2\,B\,,
\end{equation}
where $B=B(r,\vartheta)$ is a nonnegative real analytic function.
Computing the Laplacian at the points where $r=0$, using the fact that the gradient of $f$ vanishes there, we get:
$$
\De f=g^{\alpha\beta}(\pa^2_{\alpha\beta}f-\Gamma_{\alpha\beta}^\gamma \pa_\gamma f)\,=\,-2\,g^{11}\,B\,=\,-2\,B \, . $$
It follows that we can rewrite~\eqref{eq:analytic_expansion_2} as
\begin{equation}
\label{eq:expansion_u_rewr}
f(r,\vartheta)\,=\,\fmax\,+\,\frac{\De f(0,\vartheta)}{2}\,r^2\,+r^3\,F(r,\vartheta)\,.
\end{equation}
%Of course, expanding $\De f(r,\vartheta)$ and up to change the function $F$, we can substitute $\De f(r,\vartheta)$ with $\De f(0,\vartheta)$ in~\eqref{eq:expansion_u_rewr}.
This concludes the first part of the proof.

We now assume that $\De f=-\ffi(f)$ and we use this to gather more information on the real analytic function $F$. Set $\Sigma_\rho=\{r=\rho\}$ and observe that all $\Sigma_\rho$ with $\rho$ small enough are smooth, since $(r,\vartheta)=(r,\vartheta^1,\dots,\vartheta^{n-1})$ is a real analytic chart and $|\na r|=1\neq 0$. In particular, of course, we have $\Sigma_0=\Sigma\cap\Omega$. On each $\Sigma_\rho$, the Laplacian $\Delta f$ of $f$ satisfies the following well known formula
\begin{equation}
\label{eq:lapl_exp_Sigmar}
\De f\,=\,\nana f({\rm n},{\rm n})\,+\,\HHH\,\langle\na f\,|\,{\rm n}\rangle\,+\,\De^{\!\top} f\,,
\end{equation}
where ${\rm n}=\pa/\pa r$ is the $g$-unit normal to $\Sigma_\rho$, $\HHH$ is the mean curvature of $\Sigma_\rho$ with respect to ${\rm n}$ and $\De^{\!\top} f$ is the Laplacian of the restriction of $f$ to $\Sigma_\rho$ with respect to the metric $\gtop $ induced by $g$ on $\Sigma_\rho$. Evaluating ~\eqref{eq:lapl_exp_Sigmar} at $\rho=0$, since $f=\fmax$ and $|\na f|=0$ on $\Sigma_0$, we immediately get
$$
\nana f(\nu,\nu)\,=\,\De f\,,
$$
in agreement with the expansion~\eqref{eq:expansion_u_rewr}.
We now differentiate formula~\eqref{eq:lapl_exp_Sigmar} twice  with respect to $r$, obtaining
\begin{align*}
\frac{\pa \De f}{\pa r}\,&=\,\frac{\pa^3 f}{\pa r^3}\,+\,\HHH\frac{\pa^2 f}{\pa r^2}\,+\,\frac{\pa \HHH}{\pa r}\,\frac{\pa f}{\pa r}\,+\,\frac{\pa}{\pa r}\De^{\!\top} f\,,
\\
\frac{\pa^2 \De f}{\pa r^2}\,&=\,\frac{\pa^4 f}{\pa r^4}\,+\,\HHH\frac{\pa^3 f}{\pa r^3}\,+\,2\,\frac{\pa \HHH}{\pa r}\,\frac{\pa^2 f}{\pa r^2}\,+\,\frac{\pa^2 \HHH}{\pa r^2}\,\frac{\pa f}{\pa r}\,+\,\frac{\pa^2}{\pa r^2}\De^{\!\top} f\,.
\end{align*}
Let us focus first on the terms involving $\De^{\!\top} f$. Calling $\gtop $ the metric induced by $g$ on $\Sigma_\rho$ and $\Gamma^\top$ the Christoffel symbols of $\gtop $, we have
$$
\De^{\!\top}f\,\,=\,\,
(\gtop )^{ij}\,\frac{\pa^2 f}{\pa \vartheta^i\pa \vartheta^j}\,+\,(\gtop )^{ij}\,\,(\Gamma^\top)^{k}_{ij}\,\,\frac{\pa f}{\pa \vartheta^k}\,,
$$
where the indices $i,j,k$ vary between $1$ and $n-1$.
On the other hand, if we assume that $\De f=-\ffi$ where $\ffi$ is a function of $f$, then from~\eqref{eq:expansion_u_rewr} we get
$$
\frac{\pa^2 f}{\pa \vartheta^i\pa \vartheta^j}_{|_{r=0}}\,=\,\frac{\pa^2 f}{\pa r\pa \vartheta^i}_{|_{r=0}}
\,=\,
\frac{\pa^3 f}{\pa r^2 \pa \vartheta^i}_{|_{r=0}}\,=\,
\frac{\pa^3 f}{\pa r \pa \vartheta^i \pa \vartheta^j}_{|_{r=0}}\,=\,\frac{\pa^4 f}{\pa r^2\pa \vartheta^i\pa \vartheta^j}_{|_{r=0}}\,=\,0\,,
$$
for all $i,j=1,\dots,n-1$. From this, it easily follows
$$
\frac{\pa}{\pa r}\De^{\!\top} f_{|_{r=0}}\,=\,\frac{\pa^2}{\pa r^2}\De^{\!\top}f_{|_{r=0}}\,=\,0\,.
$$
Since we also know that $\pa f/\pa r=0$ and $\pa^2 f/\pa r^2=\De f=-\ffi(\fmax)$ on $\Sigma$, from the expansions above we deduce
\begin{align*}
\frac{\pa^3 f}{\pa r^3}_{|_{r=0}}\,&=\,\ffi(\fmax)\,\HHH\,.
\\
\frac{\pa^4 f}{\pa r^4}_{|_{r=0}}\,&=\,2\,\ffi(\fmax)\,\frac{\pa \HHH}{\pa r}_{|_{r=0}}\,-\,\ffi(\fmax)\,\HHH^2\,+\,\ffi(\fmax)\,\dot\ffi(\fmax)\,.
\end{align*}
%where we have denoted by $\HHH$ the mean curvature of $\Omega\cap\Sigma=\Sigma_0$ for simplicity.
Furthermore, from~\cite[Lemma~7.6]{Hui_Pol}  we get
$$
\frac{\pa \HHH}{\pa r}_{|_{r=0}}
\,=\,-\,|\hhh|^2\,-\,\Ric(\nu,\nu)\,=\,\frac{1}{2}\left(\RRR^{\Sigma}\,-\,\RRR\,-\,|\hhh|^2\,-\,\HHH^2\right)
\,,
$$
where in the latter equality we have used the Gauss Codazzi equation.
Now that we have computed the third and fourth derivative of $f$, we can use this information to improve~\eqref{eq:expansion_u_rewr} and get the desired expansion of $f$.
\end{proof}

Theorem~\ref{thm:expansion_u} has some interesting consequences. Let us start from the simplest one. From expansion~\eqref{eq:expansion_u_final_-1}, we can compute the explicit formula for the gradient of $f$ as we approach a point $p$ in the top stratum of ${\rm MAX}(f)$ as
\begin{align}
\label{eq:grad_u_nearSigma_N}
\lim_{x\not\in{\rm MAX}(f),\,x\to p}\,\frac{|\na f|^2 (x)}{\fmax-f(x)}\,&=\,
\lim_{x\to p}\frac{(\De f(p))^2\,r^2(x)\,+\,\mathcal{O}(r^3(x))}{-({\De f}(p)/{2})\,r^2(x)\,+\,\mathcal{O}(r^3(x))}\,
=\,-\,2\,\De f(p)\,.
\end{align}
Notice in particular that formula~\eqref{eq:grad_u_nearSigma_N} improves Corollary~\ref{cor:rev_loj} for points in the top stratum of the set of the maxima.

Another useful consequence of Theorem~\ref{thm:expansion_u} is the following regularity result on the top stratum of ${\rm MAX}(f)$.
In order not to overburden notation we will  denote by  $\Sigma$ the top stratum  $\Sigma^{n-1}$ in the  decomposition~\eqref{eq:deco_Loja} of ${\rm MAX}(f)$.

\begin{theorem}
\label{thm:SigmaC1}
Let $(M,g)$ be a real analytic Riemannian manifold, let $f:M\to\R$ be a real analytic function and let $\Sigma$ be the top stratum of ${\rm MAX}(f)$. If  $p\in\overline{\Sigma}$ and $|\nana f|(p)\neq 0$, then $p\in\Sigma$.
\end{theorem}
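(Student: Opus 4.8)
The plan is to argue by contradiction: suppose $p \in \overline{\Sigma} \setminus \Sigma$. Since $p$ lies in the closure of the top stratum but not in $\Sigma^{n-1}$ itself, the {\L}ojasiewicz stratification \eqref{eq:deco_Loja} forces $p \in \Sigma^i$ for some $i \le n-2$. The first key step is to exploit the hypothesis $|\nana f|(p) \neq 0$ together with the fact that $p$ is a critical point of $f$: choose a unit vector $X \in T_p M$ with $\nana f(p)(X,X) \neq 0$. Because $p \in {\rm MAX}(f)$, the Hessian $\nana f(p)$ is negative semidefinite, so in fact $\nana f(p)(X,X) < 0$, and moreover the kernel of $\nana f(p)$ has dimension at most $n-1$; the directions in which $f$ decreases quadratically away from $p$ are exactly the non-null directions of $\nana f(p)$.

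The second step is a local Morse-type normal form near $p$. After a real-analytic change of coordinates I would split $\R^n = \ker \nana f(p) \oplus (\ker \nana f(p))^\perp$ and write, via the real-analytic splitting lemma (or directly, since $f$ is analytic with $\nana f(p)$ of rank $k \ge 1$),
\begin{equation}
f(y,z) \,=\, \fmax \,-\, Q(z) \,+\, h(y)\,,
\end{equation}
where $z$ ranges over the $k$-dimensional non-degenerate subspace, $Q$ is a positive-definite analytic quadratic form (to leading order), $y$ over the complementary $(n-k)$-dimensional space, and $h$ is analytic with vanishing $2$-jet at the origin. The crucial observation is that $\mathrm{MAX}(f)$ near $p$ is contained in $\{z = 0\}$: indeed on $\{z=0\}$ we have $f = \fmax + h(y)$, and for $f$ to attain its maximum we need $h(y) \le 0$ near $0$ with $\mathrm{MAX}(f) \cap \{z=0\} = \{h = 0\}$; whereas any point with $z \ne 0$ small has $f < \fmax$. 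Hence $\mathrm{MAX}(f)$ near $p$ equals $\{z = 0\} \cap \{h = 0\}$, which is an analytic subvariety of the $(n-k)$-dimensional slice $\{z=0\}$, and therefore has dimension $\le n - k \le n-1$, with equality forcing $k=1$ and $h \equiv 0$ near $0$.

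The third step closes the argument. If $p \in \overline{\Sigma}$ with $\Sigma$ the top stratum, then arbitrarily close to $p$ there are points of $\Sigma^{n-1}$, i.e. points where $\mathrm{MAX}(f)$ is locally an $(n-1)$-dimensional analytic manifold. By the previous paragraph this is only possible if $k = 1$ (the Hessian has rank exactly one at $p$) and $\mathrm{MAX}(f)$ coincides near $p$ with the analytic hypersurface $\{z = 0\}$. But then $p$ itself lies in this $(n-1)$-dimensional analytic manifold, so $p \in \Sigma^{n-1} = \Sigma$, contradicting $p \notin \Sigma$. Alternatively, and perhaps more cleanly, one can invoke Theorem~\ref{thm:expansion_u}: the hypothesis $|\nana f|(p) \ne 0$ together with $p$ being a maximum point shows $\Delta f(p) < 0$ (trace of a nonzero negative semidefinite form), and one checks that the signed-distance expansion \eqref{eq:expansion_u_final_-1}, which was derived for points in the top stratum, in fact goes through verbatim at $p$ once one knows $\mathrm{MAX}(f)$ is a hypersurface near $p$ — and the hypersurface property is exactly what the normal form above delivers.

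I expect the main obstacle to be the second step: establishing rigorously that $\mathrm{MAX}(f)$ near $p$ is contained in the zero set of $z$ (equivalently, that no maximum point escapes the kernel directions of the Hessian), and then extracting from the {\L}ojasiewicz stratification that having a point of the $(n-1)$-stratum arbitrarily near $p$ forces the kernel to be exactly one-dimensional and $p$ to sit on the hypersurface. The quadratic-form bookkeeping is routine, but one must be careful that the analytic splitting lemma applies when $\nana f(p)$ is only of rank $k < n$ (this is standard — it is the parametrized Morse lemma with parameters $y$) and that the remainder $h(y)$ genuinely controls the local structure of $\mathrm{MAX}(f)$ in the degenerate slice rather than merely bounding it. Everything else — negative semidefiniteness of the Hessian at a max, dimension count of the analytic variety — is immediate.
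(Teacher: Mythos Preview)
Your argument is correct and takes a genuinely different route from the paper. The paper proceeds geometrically: it first uses continuity of eigenvalues of $\nana f$ to show that the unit eigenvector $\nu$ for the smallest eigenvalue extends analytically across $p$, which rules out transversal self-intersections of $\overline\Sigma$; it then expands $f$ along geodesics normal to $\Sigma$ to show that distinct sheets of $\Sigma$ stay a uniform normal distance apart, ruling out cusps and tangential touching; finally it gives separate arguments that $p$ is not a boundary point of $\overline\Sigma$ and that no lower strata of ${\rm MAX}(f)$ meet $p$. Your approach via the analytic splitting lemma is more algebraic and considerably shorter: once $f=\fmax - Q(z)+h(y)$, the constraints $Q\ge 0$ and $h\le 0$ (from $f\le\fmax$) give ${\rm MAX}(f)=\{z=0,\ h=0\}$ by inspection, a dimension count forces $k=1$, and analyticity of $h$ together with the existence of nearby top-stratum points forces $h\equiv 0$, so ${\rm MAX}(f)=\{z=0\}$ near $p$. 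The paper's route is fully self-contained and makes the possible pathologies explicit; yours reaches the local normal form directly and handles all cases at once, at the cost of importing the parametrized Morse lemma (which does hold in the real-analytic category, so your flagged worry there is unfounded). Two minor points: in the splitting lemma $Q$ is \emph{exactly} a nondegenerate quadratic form, not merely ``to leading order'' --- your hedge is unnecessary and, if taken literally, would complicate the step ${\rm MAX}(f)\subset\{z=0\}$; and your closing ``alternative'' via Theorem~\ref{thm:expansion_u} is redundant, since once the normal form has delivered that ${\rm MAX}(f)$ is a hypersurface at $p$ you are already finished.
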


\begin{proof}
Let $p\in\overline{\Sigma}$ with $|\nana f|(p)\neq 0$, and let $\Omega$ be a small relatively compact open neighborhood of $p$ in $M$.
From %the decomposition~\eqref{eq:deco_Loja},
what has been said
it follows that we can choose $\Om$ small enough so that
\begin{equation}
\label{eq:Sigma1Sigma2..}
\overline{\Sigma}\cap \Om\,=\,\overline{\Sigma}_1\cup\dots\cup\overline{\Sigma}_k
\end{equation}
for some $k\in\N$, where the $\Sigma_i$'s are connected real analytic hypersurfaces contained in the top stratum $\Sigma$ %\equiv \Sigma^{n-1}$
 and $p\in\overline{\Sigma}_i$ for all $i=1,\dots,k$.

%For every $i=1,\dots,k$ and for every $x\in\Sigma_i$, let us denote by $\nu(x)$ the unit normal to $\Sigma_i$ at the point $x$.
From expansion~\eqref{eq:expansion_u_final_-1}, it follows that at any point $x\in\Sigma_i$,  with respect to an orthonormal basis $\nu(x),X_1,\dots,X_{n-1}$, where $\nu(x)$ is the unit normal to $\Sigma_i$, the Hessian of $f$ is represented by a matrix of the form
\begin{equation}
\label{eq:hessian_matrix}
\begin{bmatrix}
    \De f(x) & 0      & \cdots  & 0 \\
    0       & 0      & \cdots  & 0 \\
    \vdots  & \vdots & \ddots & \vdots \\
    0       & 0      & \cdots  & 0
\end{bmatrix}\,\,.
\end{equation}
Since we are assuming that $|\nana f|\neq 0$ and that $x$ is a maximum point for $f$, it is clear that $\De f(x)<0$. Using the fact that eigenvalues are continuous (see for instance~\cite[Chapter~2, Theorem~5.1]{Kato}), it follows that
%the Hessian of $f$ at the point $p$ has the same shape with respect to a suitable basis of eigenvectors:
the eigenvalues of the Hessian of $f$ at $p$ are $\De f(p)$ (which is negative by hypothesis), taken with multiplicity one, and $0$, taken with multiplicity $n-1$. In particular, using again the continuity of the eigenvalues, restricting our neighborhood $\Omega$ if necessary, we can suppose that the minimal eigenvalue of $\nana f$ is simple on the whole $\Omega$. Since the function $\Omega\ni x\mapsto\nana f(x)$ is real analytic, it is known that the simple eigenvectors are real analytic in $\Omega$, see for instance the discussion in~\cite[Chapter~2, \S~1]{Kato} or in~\cite[Section~7]{Kur_Pau}, where a much more general statement in discussed. Therefore the vector $\nu$ extends to a real analytic unit-length vector field throughout $\Omega$. In particular, $\nu$ is real analytic on $\overline{\Sigma}\cap\Omega$ and the tangent space $T_p\overline{\Sigma}=\nu_p^{\perp}$ is well defined.

\begin{figure}
 \centering
 \subfigure[Transversal point~\label{fig:sing1}]
   {\includegraphics[scale=1.4]{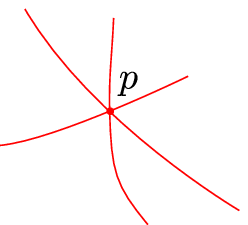}}
 \hspace{15mm}
 \subfigure[Cuspidal point\label{fig:sing2}]
   {\includegraphics[scale=1.4]{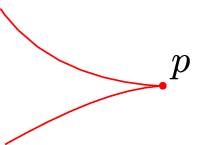}}
   \hspace{15mm}
 \subfigure[Touching point\label{fig:sing3}]{\includegraphics[scale=1.4]{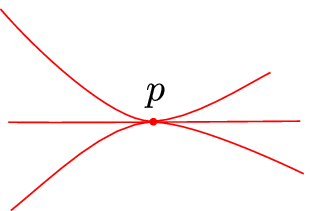}}
 \caption{\small Visual $1$-dimensional representation of the possible singularities of $\overline{\Sigma}$. The first part of the proof of Theorem~\ref{thm:SigmaC1} is concerned with showing that the normal to $\overline{\Sigma}$ is well defined everywhere, thus ruling out transversal singularities like the one pictured in~\ref{fig:sing1}. To exclude cuspidal points~\ref{fig:sing2} or multiple hypersurfaces touching tangentially~\ref{fig:sing3} one needs a different argument, which is presented in the second part of the proof.
 \label{fig:sing_Sigma}}
 \end{figure}

This proves that the case shown in Figure~\ref{fig:sing1} does not occur. However, to prove that $\overline{\Sigma}\cap\Omega$ is a real analytic hypersurface
we still need to exclude the possibility that $p$ is a cuspidal point as in Figure~\ref{fig:sing2}, or that there are multiple hypersurfaces touching at $p$ as in Figure~\ref{fig:sing3}.
We now show that such
singularities cannot happen, by proving that $\Sigma$ is uniformly distant from itself along its normal direction.
For this we start by considering a point $x\in\Sigma\cap\Omega$ and a unit speed geodesic $\gamma_x$ with $\gamma_x(0)=x$ and $\dot\gamma_x(0)$ orthogonal to $\Sigma$.
From the above observations on the Hessian of $f$, it follows that the restriction of $f$ to $\gamma_x$ satisfies the following
$$
(f\circ\gamma_x)(t)\,=\,\fmax\,+\,\frac{\De f(x)}{2}\, t^2\,+\,\sigma_x(t)\,,
$$
where $\sigma_x(t)$ is an error term such that, for small $t$,
$$
|\sigma_x(t)|\,\leq\,\frac{(f\circ\gamma_x)'''(\xi_t)}{3!}\,|t|^3\,,
$$
for some $\xi_t\in\R$ with $|\xi_t|<t$. Since $f$ is a smooth function and $|\dot\gamma_x|\equiv 1$, the derivatives of $f\circ\gamma_x$ are bounded in $\Omega$
 by a constant that does not depend on the choice of $x\in\Sigma$. This means that there exists a constant $C$ such that $|\sigma_x(t)|< C\,|t|^3$ for all $x\in\Sigma$, $t\in\R$ such that $\gamma_x(t)\in\Omega$.
We also notice that the Laplacian of $f$ is strictly negative at $x$, as it follows immediately from the fact that $x$ is a maximum point and the Hessian of $f$ at $p$ does not vanish.
Therefore, for every $|t|\leq |\De f(x)|/(2C)$ it holds
$$
(f\circ\gamma_x)(t)\,=\,\fmax\,+\,\frac{\De f(x)}{2}\, t^2\,+\,\sigma_x(t)\,<\,\fmax\,-\,\left(\frac{|\De f(x)|}{2C}\,-\,|t|\right)C\,t^2\,\leq\,\fmax\,.
$$
This means that every point of $\gamma(t)$ with $|t|\leq |\De f(x)|/(2C)$, $t\neq 0$, does not belong to ${\rm MAX}(f)$. This must hold for every unit speed geodesic starting from a point of $\Sigma$ and directed orthogonally to $\Sigma$. From this property it follows that the pathologies shown in \figurename~\ref{fig:sing_Sigma} do not show up at our point $p$. In fact, if there exist $\Sigma_1,\Sigma_2$ in the decomposition~\eqref{eq:Sigma1Sigma2..} that form a singularity as in \figurename~\ref{fig:sing2} or~\ref{fig:sing3},
 this would mean that orthogonal geodesics starting from points of $\Sigma_1$ arbitrarily close to $p$ would intersect $\Sigma_2$ (which is also contained in ${\rm MAX}(f)$) for arbitrarily small values of $t$. This is in contradiction with what we just proved, hence such singularities cannot exist. This proves that $\overline{\Sigma}\cap\Omega$ is indeed a real analytic hypersurface, possibly with boundary.

 Let us now show that $p\not\in\pa\overline{\Sigma}$.
  We have already seen above that we can define an analytic unit length vector field $\nu$ on the whole neighborhood $\Omega$, in such a way that $\nu$ coincides with the normal vector at each point of $\Sigma\cap\Omega$. From~\cite[Proposizione 3.7.2]{Aba_Tov} it follows that we can find an analytic chart $(x^1,\dots,x^n)$ centered at $p$ such that $\nu=\pa/\pa x^1$ in the whole $\Omega$. Since $\Sigma\cap\Omega$ is smooth and $\nu=\pa/\pa x^1$ is orthogonal to it, it follows that $\overline{\Sigma}\cap\Omega\subseteq\{x^1=0\}$. Since $f$ is analytic and we have shown that $\fmax-f=0$ on an hypersurface contained in $\{x^1=0\}$, it is immediate to deduce that it is possible to factor out $x^1$ from the Taylor expansion of $\fmax-f$. It follows that $\fmax-f=0$ on the whole $\{x^1=0\}$, which implies $\overline{\Sigma}\cap\Omega=\{x^1=0\}$.
 In particular, $p$ necessarily belongs to the interior of $\overline{\Sigma}$.

To conclude that $p$ is contained in the top stratum of ${\rm MAX}(f)$, it remains to show that there are no other lower dimensional components of ${\rm MAX}(f)$ that pass through $p$. In other words, it remains to show that,
making $\Omega$ smaller if necessary, we have $\Omega\cap{\rm MAX}(f)=\Omega\cap\overline{\Sigma}$. To this end, we first observe that,
 since we have already shown that $\overline{\Sigma}$ is a regular analytic hypersurface, the proof of Theorem~\ref{thm:expansion_u} can be repeated without modification to show that formula~\eqref{eq:expansion_u_final_-1} holds in a neighborhood of $p$, where $r$ is the distance from $\overline{\Sigma}$. In particular, the argument used above in this proof tells us that $\overline{\Sigma}\cap\Omega$ is uniformly distant from the rest of ${\rm MAX}(f)$. It is then clear that we can choose $\Omega$ small enough so that $\Omega\cap{\rm MAX}(f)=\Omega\cap\overline{\Sigma}$.
It follows that $\overline{\Sigma}\cap\Omega$ is contained in the top stratum $\Sigma$, which implies in particular that $p\in\Sigma$, as desired.
%
%It remains to rule out the case $p\in\pa\overline{\Sigma}$. To do this, we assume now $p\in\pa\overline{\Sigma}$ and $|\nana f|(p)\neq 0$, and we show $p\in{\rm Int}(\overline{\Sigma})$, yielding a contradiction. Let us take an open neighborhood $\Omega$ of $p$ such that $|\nana f|\neq 0$ in $\Omega$. Repeating the previous arguments, up to making $\Omega$ smaller, we obtain again that $\Sigma\cap\Omega$ is an analytic hypersurface, and that there exists an analytic unit-length vector field $X$ on $\Omega$ such that $X=\nu$ on $\Sigma\cap\Omega$. In particular, applying Frobenius Theorem to the distribution $X^{\perp}$, we obtain that $\Sigma\cap\Omega$ can be continued beyond $\pa\overline{\Sigma}$, that is, there exists an hypersurface $\tilde\Sigma\subset\Omega$ with $\Sigma\subset\tilde\Sigma$, $\pa\tilde\Sigma=\emptyset$ and such that the unit normal to any point $x\in\tilde\Sigma$ is given by $X(x)$. We can then apply the \begin{comm}principle of unique continuation (TROVARE UNA BUONA REFERENZA...)\end{comm} to deduce that $\tilde\Sigma$ and $\Sigma$ must coincide in $\Omega$. In particular, $\pa\overline{\Sigma}\cap\Omega=\pa\tilde{\Sigma}=\emptyset$ and we have a contradiction.
\end{proof}

Theorem~\ref{thm:SigmaC1} tells us that singularities of the $(n-1)$-dimensional part of the set ${\rm MAX}(f)$ can only appear at the points where the Hessian of $f$ vanish. In particular, the following corollary follows at once.

\begin{corollary}
\label{cor:smooth}
Let $(M,g)$ be a real analytic Riemannian manifold, let $f:M\to\R$ be an analytic function and let $\Sigma \neq \emptyset$ be the top stratum of ${\rm MAX}(f)$.
  If $\nana f$ is nowhere vanishing on  $\overline{\Sigma}$, then
$$
 \overline{\Sigma}=\Sigma
  \,,
$$
thus $\Sigma$
is a complete real analytic hypersurface with empty boundary.
\end{corollary}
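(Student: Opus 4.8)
The plan is to deduce this directly from Theorem~\ref{thm:SigmaC1}. First I would fix an arbitrary point $p\in\overline{\Sigma}$. By hypothesis $\nana f$ is nowhere vanishing on $\overline{\Sigma}$, so in particular $|\nana f|(p)\neq 0$; Theorem~\ref{thm:SigmaC1} then applies verbatim and yields $p\in\Sigma$. Since $p$ was arbitrary, this gives the inclusion $\overline{\Sigma}\subseteq\Sigma$, and as the reverse inclusion is automatic we conclude $\overline{\Sigma}=\Sigma$. In particular $\Sigma$ is a closed subset of $M$.

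Next I would spell out why this already implies the remaining assertions. Recall that the proof of Theorem~\ref{thm:SigmaC1} establishes more than membership in the top stratum: it produces, around each such $p$, an analytic chart $(x^1,\dots,x^n)$ centered at $p$ in which $\overline{\Sigma}\cap\Omega=\{x^1=0\}$ and $\Omega\cap{\rm MAX}(f)=\Omega\cap\overline{\Sigma}$. Thus $p$ is an interior point of an honest real analytic hypersurface, not a boundary or cuspidal point, and no lower-dimensional stratum of ${\rm MAX}(f)$ meets $\Omega$. Running this at every $p\in\overline{\Sigma}=\Sigma$ exhibits $\Sigma$ as a real analytic embedded hypersurface with empty boundary; and since $\Sigma$ equals its own closure it is a closed subset of $M$, which is the sense in which the hypersurface $\Sigma$ is complete (equivalently, ${\rm MAX}(f)$ has no strata $\Sigma^i$ with $i<n-1$).

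There is essentially no obstacle here: the entire content of the corollary is packaged inside Theorem~\ref{thm:SigmaC1}, and the only delicate point is that the hypothesis must be imposed on all of $\overline{\Sigma}$ rather than on $\Sigma$ alone — this is exactly what allows every limit point of $\Sigma$ to be fed into that theorem, and it is necessary, since the pathologies ruled out there (cusps, tangential self-touchings, merging sheets) occur precisely at points of $\overline{\Sigma}\setminus\Sigma$. I would therefore state this explicitly and keep the proof to the two short implications above.
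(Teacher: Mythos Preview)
Your argument is correct and matches the paper's approach exactly: the paper gives no explicit proof beyond ``the following corollary follows at once'' from Theorem~\ref{thm:SigmaC1}, and your first paragraph spells out precisely that one-line deduction. One small caveat: your parenthetical remark that completeness of $\Sigma$ is ``equivalently, ${\rm MAX}(f)$ has no strata $\Sigma^i$ with $i<n-1$'' overshoots --- nothing in the corollary rules out lower-dimensional components of ${\rm MAX}(f)$ lying away from $\overline{\Sigma}$; all that is asserted (and all that the local argument in Theorem~\ref{thm:SigmaC1} gives) is that no such strata accumulate on $\Sigma$.
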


%\noindent We remark that we are not stating anything about the (possibly empty) boundary of the hypersurface $\overline{\Sigma}$, which in principle can be any stratified $(n-2)$-dimensional submanifold.
% \ptcr{should be possible to give an example}

We emphasize that it is important to require the Hessian of $f$ not to vanish on the whole closure of $\Sigma$. To illustrate this point, consider $f(x,y)=-x^2 y^2$. The function $f$ is clearly analytic on the whole $\R^2$ and satisfies $|\nana f|\neq 0$ at all points of the top stratum $\Sigma=(\{x=0\}\cup\{y=0\})\setminus\{(0,0)\}$ of the set of the maxima. Nevertheless, $\overline{\Sigma}=\{x=0\}\cup\{y=0\}$ is not smooth, which is due to the fact that the Hessian of $f$ vanishes at the singular point $(0,0)=\overline{\Sigma}\setminus\Sigma$.
Another instructive example is provided by the function $f(x,y)=-x^2 y^4$.
Similar examples are easy to construct on compact manifolds $M$ as well: for instance, the same behavior is shown by the function $f(x,y)=-\sin^2 (x)\sin^2 (y)$ on the $2$-torus $\mathbb{T}^2=[0,\pi]^2/\sim$.

\section{A Geometric criterion for the classification of static solutions}
\label{geometric criterion}

This section is devoted to the proof of  Theorem~\ref{T9VII19.1}.
Before discussing it, let us recall briefly, following the setup carefully discussed in~\cite{Bor_Maz_2-II} that a static spacetime with positive cosmological constant arises as a solution of the following problem
\begin{equation}
\label{eq:prob_SD}
\begin{dcases}
u\,\Ric=\DD u+n\,u\,g_0, & \mbox{in } M\\
\ \;\,\De u=-n\, u, & \mbox{in } M\\
\ \ \ \ \; u>0, & \mbox{in }  M\setminus\pa M \\
\ \ \ \ \; u=0, & \mbox{on } \pa M
\end{dcases}
 \qquad  \hbox{with} \  M \ \hbox{compact orientable} \ \hbox{and} \ \RRR\equiv n(n-1)\, .
\end{equation}
We recall from~\cite{Chrusciel_1,MzH} that a static triple $(M,\go,u)$ is necessarily real analytic, so that in particular the results discussed in Section~\ref{sec:MAX} apply. As a consequence, since obviously $\De u=-n\max_M(u)\neq 0$ on ${\rm MAX}(u)$, we can apply Corollary~\ref{cor:smooth} to deduce that the top stratum $\Sigma\subseteq{\rm MAX}(u)$ is a closed real analytic hypersurface. Hence, the second fundamental form, mean curvature and induced metric are all well defined on $\Sigma$, so that the statement of Theorem~\ref{T9VII19.1} is perfectly rigorous.
In the proof, we will need the following version of the Cauchy-Kovalevskaya Theorem for systems of quasilinear PDEs; we report the complete statement for the ease of reference.

\begin{theorem}[Cauchy-Kovalevskaya for nonlinear systems,~\cite{Driver}]
\label{thm:CK}
Let $(x^1,\dots,x^n)$ be a real analytic coordinate chart. Consider a PDE system of the form
\begin{equation}
\label{eq:CK}
\begin{dcases}
\sum_{|I|=k}b_I(x,\pa^1 f,\dots,\pa^{k-1}f)\,\pa^I f\,+\,c(x,\pa^1 f,\dots,\pa^{k-1}f)\,=\,0\,,
\\
\frac{\pa^j f}{(\pa x^1)^j}(x)\,=\,a_j(x)\ \hbox{ for $x\in\{x^1=0\}$ and $j=0,1,\dots,k-1$}
\end{dcases}
\end{equation}
where $I=(I_1,\dots,I_n)$ is a multiindex, $|I|=I_1+\dots+I_n$, $a_j\in\R^m$, $b_I\in\R^{m\times m}$, $c\in\R^{m\times n}$, $f:\R^n\to\R^m$.
Suppose that $a_j$, $b_I$ and $c$ are real analytic functions in their entries and that $b_{(k,0,\dots,0)}$ is an invertible matrix (that is, the hypersurface $\{x^1=0\}$ is noncharacteristic). Then there is a unique real analytic solution to~\eqref{eq:CK}.
\end{theorem}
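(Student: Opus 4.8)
The plan is to prove this as the classical Cauchy--Kovalevskaya theorem by the method of majorants, organized in three stages: reduction to a first-order system in normal form with vanishing Cauchy data, formal recursive determination of the Taylor coefficients (which already yields uniqueness), and a coefficientwise convergence estimate against an explicitly solvable majorant problem.

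First I would reduce \eqref{eq:CK} to a standard shape. Since $b_{(k,0,\dots,0)}$ is invertible, multiplying by its inverse lets us solve for the pure top $x^1$-derivative, writing $\pa^k f/(\pa x^1)^k$ as a real analytic function of $x$ and of the derivatives $\pa^I f$ with $|I|\le k$ and $I_1\le k-1$. Introducing as new unknowns all partial derivatives of $f$ of order at most $k-1$ (together with $f$ itself) converts the $k$-th order system into a first-order quasilinear system of the form $\pa_{x^1}U=\sum_{j=2}^n A_j(x,U)\,\pa_{x^j}U+B(x,U)$, in which $x^1$ is the evolution variable and no $x^1$-derivative occurs on the right; here $A_j$ and $B$ are real analytic in their arguments, and the passage between the Cauchy data $a_0,\dots,a_{k-1}$ and the restriction $U|_{\{x^1=0\}}$ is analytic and invertible. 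After translating the base point to the origin and subtracting an analytic function extending the prescribed $U|_{\{x^1=0\}}$, I may assume the Cauchy data vanish at the origin, so that we seek a real analytic $U$ with $U(0,x^2,\dots,x^n)=0$ solving the normal-form system.

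Next I would determine the candidate solution formally. Writing $U=\sum_\alpha U_\alpha\,x^\alpha/\alpha!$ and repeatedly differentiating the normal-form equation at the origin, one checks that each coefficient $U_\alpha$ is given by a universal polynomial, with nonnegative coefficients, evaluated on the Taylor coefficients of $A_j$ and $B$ and on the previously determined $U_\beta$ with $|\beta|<|\alpha|$. This recursion has a unique solution, which already gives the uniqueness assertion: any two analytic solutions have identical Taylor expansions at the origin and hence coincide on a neighborhood of it.

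The heart of the argument, and the step I expect to be the main obstacle, is proving that this formal series converges. For this I would employ the method of majorants. Since $A_j$ and $B$ are real analytic, their Taylor coefficients at the origin are dominated in absolute value by those of an explicit elementary majorant, e.g.\ a function of the form $Cr\bigl(r-(x^2+\cdots+x^n)-(U^1+\cdots+U^N)\bigr)^{-1}$, with $C,r>0$ chosen large and $U^1,\dots,U^N$ the components of $U$. Because the universal polynomials in the recursion have nonnegative coefficients, a direct induction on $|\alpha|$ yields $|U_\alpha|\le\overline U_\alpha$, where $\overline U_\alpha$ are the coefficients of the solution $\overline U$ of the associated majorant system with vanishing data. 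The symmetry of the majorant in $x^2,\dots,x^n$ collapses that system to a scalar problem in $x^1$ and $s=x^2+\cdots+x^n$ alone, which can be solved in closed form and is manifestly real analytic near the origin with a computable radius of convergence. The domination $|U_\alpha|\le\overline U_\alpha$ then forces the series for $U$ to converge on a neighborhood of the origin, producing a genuine analytic solution of the normal-form system; undoing the reductions of the first step returns a real analytic solution $f$ of \eqref{eq:CK}, while uniqueness has already been secured through the recursion.
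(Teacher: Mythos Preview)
The paper does not prove this theorem at all: it is stated with a citation to \cite{Driver} and used as a black box in the proof of Theorem~\ref{thm:CKappli}. Your outline is the classical majorant proof of Cauchy--Kovalevskaya (reduction to first-order normal form, formal recursion for uniqueness, majorant comparison for convergence), which is correct and is precisely what one finds in the cited reference, so there is nothing to compare.
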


We are now ready to prove Theorem~\ref{T9VII19.1}, which we rewrite here for the reader's convenience.

\begin{theorem}[Geometric criterion for isometric embeddings]
\label{thm:CKappli}
Let $(M,g)$ be a compact $n$-dimensional,  $n \ge 3$, totally geodesic spacelike slice
bounded by Killing horizons
 within a $(n+1)$-dimensional static solution to the vacuum Einstein equations with positive cosmological constant $\Lambda>0$ and let $\Sigma$ be the top stratum of the set of the maxima of the lapse function.
 Let also $u \in \mathscr{C}^\infty(M)$ be the corresponding positive lapse function, vanishing on the boundary of $M$ and let us denote by $\Sigma$ a connected component of the top stratum of ${\rm MAX}(u)$.
 If the metric induced on $\Sigma$ by $g$ is Einstein and $\Sigma$  is  totally umbilic, then $(M,g)$ can be isometrically embedded either in a Birmingham-Kottler spacetime or in a Nariai spacetime.
 \end{theorem}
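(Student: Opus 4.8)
The strategy is to extract from the hypotheses the full Cauchy data of $(g,u)$ along $\Sigma$, to recognise it as the Cauchy data carried by a unique model spacetime on its maximum-set slice, and then to invoke the uniqueness half of the Cauchy--Kovalevskaya Theorem~\ref{thm:CK}. \emph{First, the data along $\Sigma$.} By Corollary~\ref{cor:smooth}, since $\De u\equiv-n\,\umax\neq0$ on ${\rm MAX}(u)$, the component $\Sigma$ of the top stratum is a closed real analytic hypersurface, along which $u\equiv\umax$ and $\na u=0$. Restricting the first equation of~\eqref{eq:prob_SD} to $T\Sigma$ and to the unit normal $\nu$, and using that $\DD u(\nu,\nu)=\De u=-n\,\umax$ on $\Sigma$ (a consequence of~\eqref{eq:expansion_u_final_-1}), one obtains $\Ric(\nu,\nu)=0$ and $\Ric|_{T\Sigma}=n\,g|_{T\Sigma}$. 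Feeding this, together with $\RRR\equiv n(n-1)$, the umbilicity $\hhh=\tfrac{\HHH}{n-1}\,g|_\Sigma$ and the Einstein condition on $g|_\Sigma$, into the contracted Gauss equation yields that $\RRR^\Sigma$ — hence $\HHH$ — is constant on $\Sigma$, and
\[
\RRR^\Sigma\,=\,(n-1)\,\kappa\,,\qquad \kappa\,:=\,n\,+\,\frac{n-2}{(n-1)^2}\,\HHH^2\,\geq\,n\,.
\]
In particular, the Cauchy data of $(g,u)$ on $\Sigma$ — namely $g|_\Sigma$ (Einstein with constant $\kappa\ge n$), the umbilic $\hhh$ with mean curvature $\HHH$, and the values $u|_\Sigma=\umax$, $\na u|_\Sigma=0$ — are determined by $(g|_\Sigma,\HHH,\umax)$, and $\HHH$ and $\kappa$ are linked by the displayed relation.

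\emph{The model.} If $\HHH=0$ (equivalently $\kappa=n$) we take as a model the Nariai spacetime~\eqref{eq:nariai} with base metric $\tfrac{n}{n-2}\,g|_\Sigma$ (which satisfies~\eqref{9VII19.2}): its static slices are totally geodesic, isometric to $(\Sigma,g|_\Sigma)$, and carry lapse value $1$. If $\HHH\neq0$ (equivalently $\kappa>n$) we set
\[
r_0\,:=\,\sqrt{\tfrac{n-2}{\kappa}}\,\in\,\Big(0,\sqrt{\tfrac{n-2}{n}}\Big)\,,\qquad m\,:=\,\frac{r_0^{\,n}}{n-2}\,\in\,(0,\mmax)
\]
(the range of $m$ following from~\eqref{eq:maxmass}), and take the Birmingham--Kottler spacetime~\eqref{9VII19.1} with this $m$ and base metric $\tfrac{\kappa}{n-2}\,g|_\Sigma$, which satisfies~\eqref{9VII19.2}. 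A short computation shows that the slice $\{r=r_0\}$, which is exactly ${\rm MAX}$ of the model lapse, has induced metric isometric to $g|_\Sigma$, is totally umbilic, and has mean curvature $\HHH_0$ and lapse value $u_0$ with $\HHH_0^{\,2}=(n-1)^2(\kappa-n)/(n-2)=\HHH^2$ and $u_0^{\,2}=1-n/\kappa>0$. Orienting $\nu$ so that $\HHH_0=\HHH$, and rescaling $(g,u)\rightsquigarrow(g,cu)$ — which is again a solution of~\eqref{eq:prob_SD} — so that $\umax$ equals the model's maximal lapse, we obtain an isometry $\iota$ between $\Sigma$ and the model's maximum-set slice under which \emph{all} of the Cauchy data listed above agree.

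\emph{Cauchy--Kovalevskaya and continuation.} Work in Gaussian normal coordinates $(r,\vartheta)=(r,\vartheta^1,\dots,\vartheta^{n-1})$ based on $\Sigma$; these are real analytic, because $g$ and $u$ are real analytic and $\Sigma$ is a real analytic hypersurface, so the signed distance to $\Sigma$ is analytic near $\Sigma$ (as in the proof of Theorem~\ref{thm:expansion_u}). In such coordinates $g_{rr}\equiv1$, $g_{ri}\equiv0$, and the tangential components of the first equation in~\eqref{eq:prob_SD} together with $\De u=-nu$ form a closed quasilinear second-order system for $f=(g_{ij},u)$, $i,j\in\{1,\dots,n-1\}$, of the shape $\sum_{|I|=2}b_I(f,\pa f)\,\pa^I f+c(f,\pa f)=0$, in which the coefficient of $\pa_r^2f$ is, up to an invertible constant block, the identity — it comes from the $-\tfrac12\pa_r^2g_{ij}$ in $\Ric_{ij}$ and the $\pa_r^2u$ in $\De u$ — and all coefficients are analytic in $(f,\pa f)$ wherever $u>0$. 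Hence $\Sigma=\{r=0\}$ is noncharacteristic and Theorem~\ref{thm:CK} applies. Both our triple and the model triple solve this system — the model in its own Gaussian coordinates based on its maximum-set slice — with Cauchy data that coincide under $\iota$ by the previous step; the uniqueness assertion in Theorem~\ref{thm:CK} then forces $(g,u)$ and the model to agree on a neighborhood of $\Sigma$. Finally, standard analytic continuation along paths, using the connectedness of $M$, upgrades this local isometry to an isometric embedding of $(M,g)$ into the spatial factor of the chosen Birmingham--Kottler, respectively Nariai, spacetime (if necessary after periodically identifying the model), which is the claim.

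\emph{Expected main obstacle.} I expect the crux to lie in the identification of the model: verifying that ``Einstein $+$ totally umbilic'', combined with the static equations, pins down \emph{exactly} the Cauchy data of a model's maximum-set slice — in particular that the admissible mass is precisely $m=r_0^{\,n}/(n-2)$ with $r_0^2=(n-2)/\kappa$, landing in $(0,\mmax)$, and that the Nariai alternative arises exactly in the borderline case $\HHH=0$. By contrast, the reduction to Cauchy--Kovalevskaya form is essentially the classical radial (ADM) splitting of the Einstein equations and is routine once one notes that nothing degenerates near $\Sigma$, where $u\ge\umax/2>0$.
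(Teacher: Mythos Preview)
Your proof is correct and follows essentially the same strategy as the paper: extract the full Cauchy data on $\Sigma$ from the static equations and the geometric hypotheses, match it to a unique model (Birmingham--Kottler or Nariai), and invoke the uniqueness half of Cauchy--Kovalevskaya in Gaussian normal coordinates, followed by continuation to all of $M$. The only cosmetic differences are that the paper derives the constancy of $\HHH$ via the Codazzi equation (from $\Ric(\nu,X)=0$ and umbilicity) rather than via constancy of $\RRR^\Sigma$, and extends the local isometry by re-applying Cauchy--Kovalevskaya on the level sets $\{u=a\}$ for $a>0$ rather than by invoking analytic continuation along paths.
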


\begin{proof}
%Since $\De u=n\umax\neq 0$ on $\Sigma$, Corollary~\ref{cor:smooth} tells us that the interface $\Sigma$ is a closed real analytic hypersurface, and we have proven in Theorem~\ref{thm:main_ineq_SD} that the metric $\go^\Sigma$, induced by $\go$ on $\Sigma$, has constant positive scalar curvature equal to $2r_0^{-2}(m)$. In particular, since $\Sigma$ is $2$-dimensional, it follows that $\Sigma$ is isometric to a sphere of radius $r_0(m)$. Furthermore, we know that $\Sigma$ is totally umbilical, with second fundamental form
%\begin{equation}
%\label{eq:umbil}
%\hhh\,=\,\sqrt{\frac{1}{r_0^2(m)}\,-\,\frac{1}{r_0^2(\mmax)}}\,\,\go^\Sigma\,.
%\end{equation}
Let us start by noticing that, given any vector field $X$ on $T\Sigma$ and called $\nu$ the normal to $\Sigma$, it holds
$$
\Ric(\nu,X)\,=\,\frac{\DD u(\nu,X)}{\umax}\,-\,n\,\langle\nu\,|\,X\rangle\,=\,\frac{\DD u(\nu,X)}{\umax}\,=\,0\,,
$$
where the latter equality follows immediately from the general expansion~\eqref{eq:expansion_u_final_-1}. We can then apply the Gauss formula to obtain
$$
0\,=\,\Ric(\nu,X)\,=\,({\rm div}\,\hhh)(X)\,-\,\D_X\HHH
$$
On the other hand, the umbilicity of $\Sigma$ implies $\hhh=\HHH/(n-1)\,g^\Sigma$, hence
$$
0\,=\,({\rm div}\,\hhh)(X)\,-\,\D_X\HHH\,=\,\frac{1}{n-1}\D_X\HHH\,-\,\D_X\HHH\,=\,-\,\frac{n-2}{n-1}\,\D_X\HHH\,.
$$
Since $n\geq 3$, this implies that $\HHH$ is constant. Furthermore, again from expansion~\eqref{eq:expansion_u_final_-1}, together with the static equations~\eqref{eq:prob_SD}, we get
\begin{equation}
\label{eq:riccinunu_zero}
\Ric(\nu,\nu)\,=\,\frac{\DD u(\nu,\nu)}{\umax}\,-\,n\,\langle\nu\,|\,\nu\rangle\,=\,n\,-\,n\,=\,0\,.
\end{equation}
Substituting in the Gauss Codazzi equation:
\begin{align*}
\RRR^\Sigma\,&=\,\RRR\,-\,2\,\Ric(\nu,\nu)\,+\,\HHH^2\,-\,|\hhh|^2
\\
&=\,n(n-1)\,+\,\frac{n-2}{n-1}\,\HHH^2\,.
\end{align*}
Let us now assume $\Ric^\Sigma=(n-2)\lambda g^\Sigma$ for some constant $\lambda\in\R$. Then
$
\RRR^\Sigma\,=\,(n-1)(n-2)\,\lambda
$
and we get
$$
\frac{\HHH}{n-1}\,=\,\sqrt{\lambda\,-\,\frac{n}{n-2}}\,.
$$
In particular, necessarily $\lambda\geq n/(n-2)$ and
\begin{equation}
\label{auxhh}
\hhh\,=\,\sqrt{\lambda - \frac{n}{n-2} }\,g_\Sigma\,.
\end{equation}

We have already mentioned in Theorem~\ref{thm:expansion_u} that the signed distance function
$$
r(x)\,=\,
\begin{dcases}
+d(x,\Sigma)\,, & \hbox{ if } x\in\cap \overline{M}_+\,,
\\
-d(x,\Sigma)\,, & \hbox{ if } x\in\cap\overline{M}_-\,,
\end{dcases}
$$
is an analytic function in a neighborhood $\Omega$ of $\Sigma$. Since clearly $|\D r|\equiv 1$ in $\Omega$, with respect to coordinates $(r,\vartheta)=(r,\vartheta^1,\dots,\vartheta^{n-1})$, with $\vartheta^1,\dots,\vartheta^{n-1}$ coordinates on the hypersurface $\Sigma$, we have
$$
\go\,=\,dr\otimes dr\,+\,g_{ij}\,d\vartheta^i\otimes d\vartheta^j\,,
$$
where the coefficients $g_{ij}$ are functions of the coordinates $(r,\vartheta)$. In particular, with respect to these coordinates, the second fundamental form of a level set of $r$ satisfies
$$
\hhh_{ij}\,=\,\DD_{ij} r\,=\,-\Gamma_{ij}^r\,=\,\frac{1}{2}\,\pa_r g_{ij}\,.
$$
Formula~\eqref{auxhh} tells us that
\begin{equation}
\label{eq:in_con_1}
\frac{\pa g_{ij}}{\pa r}_{\mkern 3mu \vrule height 3ex \mkern3mu {r=0}}\,=\,\,2\,\sqrt{\lambda - \frac{n}{n-2}}\,{g_{ij}^{\Sigma}}_{|_{r=0}}\,,
\end{equation}
We have also the following initial conditions
\begin{equation}
\label{eq:in_con_2}
\begin{aligned}
{\frac{\pa u}{\pa r}}_{\mkern 3mu \vrule height 3ex \mkern3mu {r=0}}\,&=\,0\,,
\\
u_{\mkern 3mu \vrule height 2ex \mkern3mu {r=0}}\,&=\,\max_M(u)\,,
\\
{\go}_{\mkern 3mu \vrule height 2ex \mkern3mu {r=0}}\,&=\,g^\Sigma\,.
\end{aligned}
\end{equation}
We would like to apply the Cauchy-Kovalevskaya Theorem~\ref{thm:CK} to the function
\begin{align*}
f\,:\,\R^n\,&\longrightarrow\,\R^{1+\frac{n(n-1)}{2}}
\\
x=(r,\vartheta)\,&\longmapsto\,\Big(u(x)\,,\,g_{ij}(x)\Big)\,,
\end{align*}
by showing that $f$ satisfies a PDE as in~\eqref{eq:CK} coming from the equations~\eqref{eq:prob_SD} and the initial conditions~\eqref{eq:in_con_1},~\eqref{eq:in_con_2} on $\Sigma=\{r=0\}$.
To this end, let us rewrite problem~\eqref{eq:prob_SD} more explicitly in terms of the derivatives of $u$ and the metric $\go$.
We recall that, in any coordinate system, the components of the Ricci tensor satisfy
\begin{align*}
\RRR_{\alpha\beta}\,
=\,-\frac{g^{\mu\eta}}{2}\left[\pa_{\mu\eta}^2 g_{\alpha\beta}\,
 +\,\pa_{\alpha\beta}^2 g_{\mu\eta}\,-\,\pa_{\mu\alpha}^2 g_{\eta\beta}\,-\,\pa_{\eta\beta}^2 g_{\mu\alpha}\right]\,+\,\hbox{ lower order terms}
\end{align*}
where the lower order terms are polynomial functions  of the components of $g$, the inverse of $g$ and their first derivatives.
In particular, with respect to the coordinates $(r,\vartheta^1,\dots,\vartheta^{n-1})$, the components $\RRR_{ij}$ of the Ricci tensor satisfy
\begin{align*}
\RRR_{ij}\,&=\,-\frac{g^{\mu\eta}}{2}\left[\pa_{\mu\eta}^2 g_{ij}\,+\,\pa_{ij}^2 g_{\mu\eta}\,-\,\pa_{\mu i}^2 g_{\eta j}\,-\,\pa_{\eta j}^2 g_{\mu i}\right]\,+\,\hbox{ lower order terms}
\\
&=\,-\frac{1}{2}\,\pa^2_{rr}g_{ij}-\frac{g^{ab}}{2}\left[\pa_{ab}^2 g_{ij}\,+\,\pa_{ij}^2 g_{ab}\,-\,\pa_{a i}^2 g_{b j}\,-\,\pa_{b j}^2 g_{a i}\right]\,+\,\hbox{ lower order terms}
\end{align*}
where $\mu,\eta$ take the values $r,1,\dots,n-1$ whereas $a,b$ take only the values $1,\dots,n-1$. Again the lower order terms are polynomial functions of the components of $\go$, the inverse of $\go$ and their first derivatives.
Notice that, since $\go$ is nonsingular, the inverse of $\go$ is well defined everywhere and its components $g^{ij}$ are analytic functions of the components $g_{ij}$. Therefore, the lower order terms appearing above are analytic functions of $\go$ and the first derivatives of $\go$.
We can now rewrite problem~\eqref{eq:prob_SD} as
\begin{equation}
\label{eq:prob_rewr}
\begin{dcases}
-\frac{u}{2}\,\pa^2_{rr}g_{ij}-\frac{u}{2}g^{ab}\left[\pa_{ab}^2 g_{ij}\,+\,\pa_{ij}^2 g_{ab}\,-\,\pa_{a i}^2 g_{b j}\,-\,\pa_{b j}^2 g_{a i}\right]\,-\pa^2_{ij}u\,=\,\hbox{lower order terms}\,,
\\
\pa^2_{rr}u\,+\,g^{ab}\,\pa^2_{ab}u\,=\,\hbox{lower order terms}\,,
\end{dcases}
\end{equation}
where the lower order terms appearing in~\eqref{eq:prob_rewr} are analytic functions depending of $u$, $\go$ and their first derivatives.
In order to apply the Cauchy-Kovalevskaya Theorem~\ref{thm:CK} to problem~\eqref{eq:prob_rewr} with initial conditions~\eqref{eq:in_con_1},~\eqref{eq:in_con_2}, it only remains to show that the surface $\Sigma$ is noncharacteristic for the system~\eqref{eq:prob_rewr}. In other words, we have to check that the matrix
%it only remains to show that the surface $\Sigma$ is noncharacteristic for the system~\eqref{eq:prob_rewr}. We recall that, according to~\cite{Driver}, given a PDE system
%$$
%\sum_{|I|=k}b_I(x,\pa^1 f,\dots,\pa^{k-1}f)\,\pa^I f\,=\,\hbox{lower order terms}\,,
%$$
%where $I=(I_1,\dots,I_n)$ is a multiindex, $|I|=I_1+\dots+I_n$, $b_I\in\R^{m\times m}$, $f:\R^n\to\R^m$, the hypersurface $\{x^1=0\}$ is noncharacteristic
%if the coefficient $b_{(k,0,\dots,0)}$ is an invertible matrix.
%In our case, the function that we are studying is
%\begin{align*}
%f\,:\,\R^3\,&\longrightarrow\,\R^4
%\\
%x=(r,\theta,\ffi)\,&\longmapsto\,\Big(u(x)\,,\,g_{\theta\theta}(x)\,,\,g_{\theta\ffi}(x)\,,\,g_{\ffi\ffi}(x)\Big)\,,
%\end{align*}
%so the surface $\Sigma\cap\Omega\,=\,\{r=0\}$ is noncharacteristic for system~\eqref{eq:prob_rewr} if and only if the matrix
\begin{equation}
\label{eq:invertiblematrix}
b_{(2,0,\dots,0)}\,=\,
\begin{bmatrix}
   0      & -u/2 & 0 & \cdots & 0 \\
   0      & 0    & -u/2 & \cdots & 0 \\
   \vdots & \vdots & \vdots & \ddots & \vdots \\
   0 & 0 & 0 & \cdots & -u/2 \\
   1 & 0 & 0 & \cdots & 0
\end{bmatrix}
\end{equation}
is invertible.
Since $u=\max_M(u)> 0$ on $\Sigma$, this is trivially true, hence the Cauchy-Kovalevskaya Theorem can be applied, telling us that there is a unique analytic solution to~\eqref{eq:prob_rewr}. On the other hand, the BK and Nariai spacetimes also solve~\eqref{eq:prob_rewr}. Moreover, it can be checked (see~\eqref{eq:secondfundform_umb} and~\eqref{eq:secondfundform_umb_N} below)  that, for any $\lambda>n/(n-2)$ there is a value $0<m<\mmax$ such that the BK spacetime with mass $m$ satisfies the initial conditions~\eqref{eq:in_con_1},~\eqref{eq:in_con_2} with respect to the chosen $\lambda$, whereas the Nariai solution satisfies the initial conditions~\eqref{eq:in_con_1},~\eqref{eq:in_con_2} with respect to $\lambda=n/(n-2)$. It follows that our solution has to coincide with a model solution inside $\Omega$. In other words, there exists $0<a<\umax$ and an isometric embedding of $(M,\go,u)\cap\{u>a\}$
inside a BK spacetime or inside the Nariai spacetime. If $a\neq 0$, then the matrix~\eqref{eq:invertiblematrix} is invertible on the hypersurface $\{u=a\}$, so we could apply the Cauchy-Kovalevskaya Theorem again on $\{u=a\}$ to extend the isometry. It follows that the isometric embedding can be extended up to the points where $u=0$, that is, up to the horizons.
Therefore, the whole
$(M,\go,u)$ can be isometrically embedded in a BK spacetime or in the Nariai solution.
\end{proof}

%\ptcr{The old version of uniqueness went to a separate file, UniquenessSummary}
%\input{UniquenessSummary}

%\input{FullUniqueness}

\section{Analysis of the interfaces and Black Hole Uniqueness Theorem}
\label{sec:BHU}

This section is mainly devoted to the proof of Theorem~\ref{thm:main_ineq_SD} below, which will be a crucial ingredient, together with Theorem~\ref{thm:CKappli}, in the proof of Theorem~\ref{thm:uniqueness}.

We start with a quick reminder of some definitions that will be helpful in the following discussion. For more details, we refer the reader to~\cite{Bor_Maz_2-II}. A connected component $N$ of $M\setminus{\rm MAX}(u)$ will be called {\em region}. The components of $\pa M$ that belong to the region $N$ are called {\em horizons} of $N$. We will distinguish different types of regions depending on the value of the surface gravity at the horizons of $N$. Namely, a region will be called {\em outer}, {\em inner} or {\em cylindrical}, depending on whether the maximum surface gravity of the horizons is smaller than, greater than or equal to $\sqrt{n}$, respectively.
Given a region $N$ of a solution $(M,\go,u)$ of~\eqref{eq:prob_SD}, the {\em virtual mass} $\mu(N,\go,u)$ is defined as the mass of the model solution that would be responsible for the maximum of the surface gravities detected at the horizons of $N$, see~\cite[Definition~3]{Bor_Maz_2-II}. The virtual mass is always a number between $0$ and $\mmax$, defined as
\begin{equation}
\label{eq:maxmass}
\mmax=\sqrt{\frac{(n-2)^{n-2}}{n^n}}\,.
\end{equation}
It has been proven in~\cite[Theorem~2.3]{Bor_Maz_2-I} that the virtual mass is always well defined and it is equal to zero only on the de Sitter spacetime.
 Finally, given two different regions $A$ and $B$, if $\overline{A}\cap \overline{B}=:\Sigma\neq \emptyset$ then it follows from Corollary~\ref{cor:smooth} that $\Sigma$ is a real analytic hypersurface with empty boundary. The hypersurface $\Sigma$ will be called {\em interface}.

We are now ready to state the following result.

\begin{theorem}[Analysis of the interfaces]
\label{thm:main_ineq_SD}
Let $(M,\go,u)$ be a solution to problem~\eqref{eq:prob_SD}.  Let $A,B$ be two connected components of $M\setminus{\rm MAX}(u)$ such that $\overline{A}\cap\overline{B}=\colon\Sigma\neq\emptyset$.
%\begin{comm}Then $\Sigma$ is a smooth analytic hypersurface
%and $\Ric(\nu,\nu)=0$ on the whole $\Sigma$, where $\nu$ is the $\go$-unit normal to $\Sigma$. (QUESTO \`E GI\`A DETTO IN PROPOSIZIONE~2.1)\end{comm} Furthermore
\begin{itemize}
\item[$(i)$] If $A$ is outer, then $B$ is necessarily inner and
\begin{equation*}
%\label{eq:mamb}
\mu(A,\go,u)\,\geq\,\mu(B,\go,u)\,.
\end{equation*}
Moreover, if $\mu(A,\go,u)=\mu(B,\go,u)=m$ then $\Sigma$ is an umbilical CMC hypersurface and the metric induced by $g$ on $\Sigma$ has constant scalar curvature.
%, on the whole $\Sigma$, it holds
%\begin{align*}
%\frac{\HHH}{n-1}\,\,&=\,\sqrt{\frac{1}{r_0^2(m)}\,-\,\frac{1}{r_0^2(\mmax)}}\,,
%\\
%\hhh\,\,&=\,\sqrt{\frac{1}{r_0^2(m)}\,-\,\frac{1}{r_0^2(\mmax)}}\,\,\go^\Sigma\,,
%\\
%\frac{\RRR^\Sigma}{(n-1)(n-2)}\,\,&=\,\,\frac{1}{r_0^2(m)}\,,
%\end{align*}
%where $\nu$ is the unit normal to $\Sigma$ pointing towards $A$, $\HHH$ and $\hhh$ are the mean curvature and second fundamental form of $\Sigma$ with respect to $\nu$, $\RRR^\Sigma$ is the scalar curvature of the metric $\go^\Sigma$ induced on $\Sigma$ by $\go$.

\medskip

\item[$(ii)$] If $A$ is cylindrical, then $B$ is inner or cylindrical and obviously
$$
\mmax\,=\,\mu(A,\go,u)\,\geq\,\mu(B,\go,u)\,.
$$
Moreover, if $\mu(A,\go,u)=\mu(B,\go,u)=\mmax$ (that is, $B$ is cylindrical) then $\Sigma$ is a totally geodesic hypersurface and the metric induced by $g$ on $\Sigma$ has constant scalar curvature.
%, on the whole $\Sigma$, it holds
%\begin{align*}
%\HHH\,\,&=\,0\,,
%\\
%\hhh\,\,&=\,\,0\,,
%\\
%{\RRR^\Sigma}\,\,&=\,\,n(n-1)\,,
%\end{align*}
%where $\nu$ is the unit normal to $\Sigma$ pointing towards $A$, $\HHH$ and $\hhh$ are the mean curvature and second fundamental form of $\Sigma$ with respect to $\nu$, $\RRR^\Sigma$ is the scalar curvature of the metric $\go^\Sigma$ induced on $\Sigma$ by $\go$.
\end{itemize}

\end{theorem}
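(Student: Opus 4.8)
The plan is to reduce everything to a careful study of the function $u$ and the metric $g$ in the signed-distance coordinates $(r,\vartheta)$ adapted to the interface $\Sigma$, exploiting the sharp expansions of Theorem~\ref{thm:expansion_u}. First I would note that, by Corollary~\ref{cor:smooth} applied to $f=u$ (recall $\De u=-n\,\umax\neq 0$ on ${\rm MAX}(u)$), the interface $\Sigma$ is a closed real analytic hypersurface, so the expansion~\eqref{eq:expansion_f_final} with $\ffi(u)=n\,u$ is available on both sides of $\Sigma$. Writing $r$ as the signed distance, positive towards the outer region $A$, the fourth-order expansion gives, for $x$ on the $A$-side and on the $B$-side respectively, the Taylor coefficients of $\umax-u$ in terms of $\HHH$, $|\hhh|^2$, $\RRR=n(n-1)$, $\RRR^\Sigma$ and $\dot\ffi=n$. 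The key point is that the mean curvature $\HHH$ of $\Sigma$, computed with respect to the normal pointing into $A$, enters the $r^3$-coefficient with a definite sign, and this is exactly the quantity that controls the surface gravities at the horizons of $A$ and $B$ through the analysis already carried out in~\cite{Bor_Maz_2-II}.

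Next I would set up the comparison with the model (BK) solutions. For a BK spacetime of mass $m$ the maximum set of the lapse is the sphere $\{r=r_{\rm eq}(m)\}$ where $u^2$ is maximal, and one can compute explicitly the mean curvature of a constant-$u$ slice approaching ${\rm MAX}(u)$ from the cosmological (outer) side versus from the black-hole (inner) side; these are monotone functions of $m$. The virtual mass $\mu(A,g,u)$ is \emph{defined} (see~\cite[Definition~3]{Bor_Maz_2-II}) as the mass of the model solution reproducing the maximal surface gravity on $\pa A$, and there is a monotone dictionary between that surface gravity and the geometry of $\Sigma$ as seen from the $A$-side — concretely, an inequality of the form "$\mu(A,g,u)\geq m$ iff the $A$-side mean curvature of $\Sigma$ is $\geq$ the corresponding model value." The same holds for $B$ from the inner side, with the opposite monotonicity. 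Since $\Sigma$ has \emph{one} second fundamental form $\hhh$ (its mean curvature from the $A$-side is minus that from the $B$-side), combining the two monotone dictionaries forces the ordering $\mu(A,g,u)\geq\mu(B,g,u)$: the single geometric datum $\hhh$ cannot simultaneously witness a large virtual mass for $B$ and a small one for $A$. This also immediately yields that $A$ outer forces $B$ inner (a second cosmological-type region on the other side would give the reversed surface-gravity inequality, contradicting $\mu\in[0,\mmax]$), and case $(ii)$ is the degenerate endpoint $m=\mmax$ of the same dictionary, where the model is Nariai and $\HHH=0$, i.e.\ $\Sigma$ totally geodesic.

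For the rigidity statement, suppose $\mu(A,g,u)=\mu(B,g,u)=m$. Then both monotone dictionaries are saturated, which pins down $\HHH$ to the single model value; feeding this back into~\eqref{eq:expansion_f_final} on both sides and matching the $r^4$-coefficients (the two one-sided expansions must glue to the globally analytic function $u$) forces a relation among $|\hhh|^2$, $\HHH^2$ and $\RRR^\Sigma$. Together with the Gauss–Codazzi identity and $\Ric(\nu,\nu)=0$ on $\Sigma$ (which follows from~\eqref{eq:expansion_u_final_-1} exactly as in~\eqref{eq:riccinunu_zero}), the trace-free part of $\hhh$ is shown to vanish, so $\Sigma$ is umbilic and CMC; then $\RRR^\Sigma$ is determined by $\HHH$ and $\RRR$, hence constant. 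I expect the main obstacle to be the bookkeeping in the surface-gravity/mean-curvature dictionary: one must verify that the relevant model quantities are genuinely strictly monotone in $m$ across the whole range $(0,\mmax)$ — including checking the behaviour at $r_-(m)$ versus $r_+(m)$ — so that the inequalities are not merely non-strict and the equality case is correctly characterised. The rest is a matter of carefully propagating the expansion of Theorem~\ref{thm:expansion_u} and invoking the computations of virtual mass from~\cite{Bor_Maz_2-I,Bor_Maz_2-II}.
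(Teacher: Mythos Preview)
Your broad outline --- expand $u$ near the interface via Theorem~\ref{thm:expansion_u}, compare with the BK models, read off the mean-curvature bound, and deduce rigidity --- is the right picture, but two steps are genuine gaps rather than bookkeeping.

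First, the ``monotone dictionary'' between the virtual mass (i.e.\ the surface gravity at the horizons of $A$, $B$) and the mean curvature of $\Sigma$ is not a direct consequence of the model computations: the horizons and $\Sigma$ live at opposite ends of the region, and nothing in the expansion~\eqref{eq:expansion_f_final} alone connects them. The paper bridges this via the \emph{pointwise gradient estimate} $|\D u|_g \le |\D u_m|_{g_m}\circ\Psi$ on the whole region (Proposition~\ref{pro:gradest}, i.e.\ \cite[Theorem~1.10]{Bor_Maz_2-II}), which is formulated through the pseudo-radial function $\Psi$ and whose proof needs the Reverse {\L}ojasiewicz inequality precisely to control the behaviour at ${\rm MAX}(u)$. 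One then expands the ratio $|\D u|^2 / (|\D u_m|_{g_m}\circ\Psi)^2$ in the signed distance $r$; the $r^1$-coefficient is $\tfrac{2(n-1)}{3}\big(\tfrac{\umax(m)}{r_0(m)}-\tfrac{\HHH}{n-1}\big)$, and the inequality $\le 1$ on each side of $\Sigma$ (with $m=m_A$, then $m=m_B$) yields the two bounds on $\HHH$. Monotonicity of $m\mapsto \umax(m)/r_0(m)$ then gives $m_A\ge m_B$. Your proposal hides this analytic input behind the word ``dictionary''.

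Second, your rigidity argument via ``matching the $r^4$-coefficients on both sides'' does not work: $u$ is globally real analytic, so the one-sided expansions always agree --- there is no new constraint there. The actual mechanism is again the gradient estimate, pushed to the next order: once $\HHH$ is pinned to $(n-1)\umax(m)/r_0(m)$, the expansion of $|\D u|^2 / (|\D u_m|_{g_m}\circ\Psi)^2$ becomes $1+\tfrac{1}{2}|\mathring{\hhh}|^2 r^2 + o(r^2)$, and the inequality $\le 1$ forces $\mathring{\hhh}=0$. Gauss--Codazzi with $\Ric(\nu,\nu)=0$ then gives the constant scalar curvature. The cylindrical case $(ii)$ is analogous, using the Nariai gradient estimate $|\D u|^2\le n(1-u^2)$ in place of the BK one.
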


In the proof, we will actually compute explicitly the value of second fundamental form, mean curvature and scalar curvature on $\Sigma$ in the rigidity case.
Once Theorem~\ref{thm:main_ineq_SD} is established, Theorem~\ref{thm:uniqueness} follows at once (see Subsection~\ref{sub:uniqueness}), thanks to the geometric criterion established in Theorem~\ref{thm:CKappli}.

\subsection{Analytic preliminaries.}
\label{sub:estimates}

 One of the fundamental ingredients in the proof of Theorem~\ref{thm:main_ineq_SD} is the gradient estimate for the static potential proven in~\cite[Theorem 1.10]{Bor_Maz_2-II}. In Proposition~\ref{pro:gradest} we recall that result and we outline the proof, as it represents an important application of the Reverse {\L}ojasiewicz inequality, and more precisely of
Corollary~\ref{cor:rev_loj}.
Our aim is to show that the gradient of the lapse function $|\D u|_g$ is pointwise controlled by the corresponding quantity $|\D u_m|_{g_m}$ on the model solution of the same mass $m$. To be more precise, one has to use first the Implicit Function Theorem to show that, for any value $m\in(0,\mmax)$ and for any region $N$, there exists a so called {\em pseudo-radial function} $\Psi : N \longrightarrow \R
%[0, u_{\rm max}(m)]
$, satisfying the relationships
\begin{align}
\label{eq:pr_function}
u(p) &= \sqrt{1-\Psi^2(p)- 2m\Psi^{2-n}(p)} \quad \hbox{for every $p \in N$}
\\
\Psi \equiv r_{\pm}(m) & \quad \hbox{on $N\cap \pa M$} \qquad \hbox{and} \qquad\Psi \equiv r_0(m) \quad\hbox{on $\overline{N}\cap{\rm MAX}(u)$} \, .
\end{align}
Here $r_-(m)<r_+(m)$ are defined as the radii of the two horizons of the BK spacetime of mass $m$, whereas $r_0(m)=[(n-2)m]^{1/n}$ represents the radius of ${\rm MAX}(u)$ in the model solution, and the sign $+$ or $-$ in the boundary condition $\Psi \equiv r_{\pm}(m)$ depends on whether $N$ is an outer or inner region, respectively.
Then, one has to prove the following fundamental gradient estimate:
\begin{proposition}[Gradient Estimate]
\label{pro:gradest}
Let $(M,\go,u)$ be a solution of~\eqref{eq:prob_SD}, and let $N$ be an outer or inner region with virtual mass $m=\mu(N,\go,u)<\mmax$. Then the following inequality holds
\begin{equation}
\label{gradest}
|\D u|_g  \,\, \leq \,\, |\D u_m|_{g_m} \circ \Psi \,,
\end{equation}
where the function $|\D u_m|_{g_m}$ on the right hand side has to be understood as the function of one real variable that associates to some $t \in [r_-(m),r_+(m)]
%[0, \umax(m)]
$
the constant value assumed by the length of the gradient of $u_m$ on the set $\{ |x| = t \}$.
\end{proposition}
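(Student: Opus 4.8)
The plan is to derive the gradient estimate \eqref{gradest} by studying the scalar function
$$
W \,=\, |\D u|_g^2 \,-\, |\D u_m|_{g_m}^2 \circ \Psi \,=\, |\D u|_g^2 \,-\, \big(1 - (n-2)\,u^2 \,/\, (\text{terms in }\Psi)\big)\,,
$$
more precisely the function measuring the defect between $|\D u|^2$ and the model quantity $\big(r_\pm(m)\big)$-profile evaluated along the pseudo-radial function $\Psi$. The idea, following \cite[Theorem 1.10]{Bor_Maz_2-II}, is to show that $W$ satisfies an elliptic differential inequality of the form $\De W \,-\, \langle X, \D W\rangle \,+\, b\,W \,\geq\, 0$ on $N\setminus{\rm MAX}(u)$ for a suitable vector field $X$ and a function $b\leq 0$, with the crucial property that $W\leq 0$ on the horizons $N\cap\pa M$ (where equality $m=\mu(N,\go,u)$ is forced by the very definition of virtual mass as the mass realizing the maximal surface gravity) and $W$ extends continuously by $0$ on ${\rm MAX}(u)$. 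Then the Weak Maximum Principle, applied as in the proof of Theorem~\ref{thm:rev_loj} on the truncated domains $N\setminus\big(\overline{B_\ep(\Sigma)}\cup\{u<\delta\}\big)$, yields $W\leq 0$ throughout $N$, which is \eqref{gradest}.

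The key steps, in order, are as follows. First, I would recall the Bochner-type identity for $|\D u|^2$ combined with the static equations \eqref{eq:prob_SD}: using $u\,\Ric = \DD u + n\,u\,g_0$ and $\De u = -n\,u$ one computes $\De|\D u|^2$ and re-expresses the curvature terms so as to eliminate $\Ric$ in favor of $u$, $|\D u|^2$ and $\DD u$. Second, I would introduce the pseudo-radial function $\Psi$ (whose existence on $N$ is guaranteed by the Implicit Function Theorem applied to the monotone profile $t\mapsto \sqrt{1 - t^2 - 2m t^{2-n}}$ on the appropriate branch), and compute $\De$ of the model comparison quantity $|\D u_m|_{g_m}^2\circ\Psi$ using the chain rule, the equation satisfied by $\Psi$, and the known ODE relations for the one-variable model functions. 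Subtracting, one arrives at a closed differential inequality for $W$ whose zeroth-order coefficient has a favorable (nonpositive) sign — at least away from ${\rm MAX}(u)$ and the horizons — so that the maximum principle is available. Third, and this is where the Reverse {\L}ojasiewicz inequality enters, one must control the behaviour of $W$ (and of the lower-order terms in the differential inequality, which contain negative powers of $\umax - u$ coming from derivatives of $\Psi$) near ${\rm MAX}(u)$: Corollary~\ref{cor:rev_loj} gives $|\D u|^2(x) / [\umax - u(x)]^\alpha \to 0$ for every $\alpha<1$, which is exactly the decay needed to show that the potentially singular terms stay bounded (or have the right sign) and that $W\to 0$ as $x\to{\rm MAX}(u)$, so that the $\ep\to 0$ limit in the maximum-principle argument closes. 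Finally, I would check the boundary inequality $W\leq 0$ on $N\cap\pa M$: on each horizon the surface gravity equals $|\D u|/\umax$, and by definition of virtual mass $m$ realizes the maximal surface gravity among the horizons of $N$, which translates precisely into $|\D u|_g \leq |\D u_m|_{g_m}\circ\Psi$ there.

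The main obstacle I anticipate is the second step: producing the precise elliptic differential inequality for $W$ with a genuinely nonpositive zeroth-order coefficient. The computation of $\De\big(|\D u_m|_{g_m}^2\circ\Psi\big)$ requires knowing an equation satisfied by $\Psi$ — morally $\Psi$ plays the role of a "radial coordinate" and one expects something like $\De\Psi = (\text{profile function of }\Psi)\,\big(1 - |\D\Psi|^2\big) + (\ldots)|\D\Psi|^2$ — and deriving this, together with a usable two-sided bound on $|\D\Psi|$, is delicate because $\Psi$ is only defined implicitly. A related subtlety is that the differential inequality for $W$ only has the correct sign after using $W$ itself to absorb certain terms (exactly as in the substitution $|\na f|^2 = w + c(\fmax - f)^\theta$ in the proof of Theorem~\ref{thm:rev_loj}), so one has to be careful that the absorption does not spoil the sign of the coefficient of $W$. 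Once the inequality is in hand, the maximum-principle mechanics and the {\L}ojasiewicz-type control near ${\rm MAX}(u)$ are essentially the same argument already carried out for Theorem~\ref{thm:rev_loj}, so I expect those to be routine; I would refer to \cite[Theorem 1.10]{Bor_Maz_2-II} for the details of the Bochner computation and only outline the structure here.
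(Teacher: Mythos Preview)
Your outline follows the same strategy as the paper's proof (which is itself only a sketch, deferring the hard computation to \cite[Theorem~1.10]{Bor_Maz_2-II}): derive an elliptic differential inequality for a defect function $W$, verify the favorable sign on the horizons via the definition of virtual mass, use the Reverse {\L}ojasiewicz inequality to control $W$ near ${\rm MAX}(u)$, and conclude with the maximum/minimum principle on truncated domains.

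The one substantive difference is your choice of $W$. The paper does \emph{not} take the raw difference $|\D u|^2 - |\D u_m|_{g_m}^2\circ\Psi$; it uses the normalized quantity
\[
W \,=\, \frac{\Psi}{|\D u_m|_{g_m}\circ\Psi}\,\Big(|\D u_m|_{g_m}^2\circ\Psi \,-\, |\D u|^2\Big)
\]
and writes the elliptic inequality with respect to the conformal metric $g/\Psi^2$. This normalization is what makes the role of Corollary~\ref{cor:rev_loj} clean: since $|\D u_m|_{g_m}\circ\Psi$ vanishes near ${\rm MAX}(u)$ at the rate $\sqrt{\umax - u}$, the potentially divergent piece of $W$ is $|\D u|^2/\sqrt{\umax - u}$, and Corollary~\ref{cor:rev_loj} with $\alpha = 1/2$ is precisely what forces this to tend to zero. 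With your unnormalized $W$ the limit $W\to 0$ at ${\rm MAX}(u)$ is trivial, and you instead propose to use the Reverse {\L}ojasiewicz inequality to tame singular coefficients in the PDE itself; this is a less direct route and it is not clear it closes without reintroducing a weight of the same kind. You are right that obtaining the elliptic inequality with the correct sign on the zeroth-order term is the crux --- the paper's specific normalization and conformal gauge are exactly what make that step go through in \cite{Bor_Maz_2-II}.
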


\begin{proof}
The argument leading to~\eqref{gradest} is quite delicate and relies on the fact that the quantity
$$
W \, = \,  \frac{\Psi}{|\D u_m|_{g_m} \circ \Psi} \left(|\D u_m|^2_{g_m} \circ \Psi  - |\D u|^2 \right)
$$
satisfies a convenient elliptic partial differential inequality on $N$, namely

\begin{align}
\Delta_{\left({g}/{\Psi^2}\right)} W &- \frac{(n-2)u^2 \Psi^{n-2} + \Psi^n - (n-2)m}{u \,[\Psi^n-(n-2)m]} \,\big\langle \rmd u \, \big| \rmd W \big\rangle_{\!(g/\Psi^2)}
\\
&- n(n-2)m \frac{u^2 \Psi^n }{[\Psi^n-(n-2)m]^2} \left[ \frac{ (n-2) |\D u_m|^2_{g_m} \circ \Psi  + (n+2) |\D u|^2  }{ |\D u_m|^2_{g_m} \circ \Psi } \right] W \, \leq\,  0 \, ,
\end{align}
where $\Delta_{\left({g}/{\Psi^2}\right)}$ and $\langle \cdot | \cdot \rangle_{\left({g}/{\Psi^2}\right)}$ represent the Laplacian and the scalar product of the conformally related metric $\Psi^{-2}g$.
%Clearly the coefficients of the operator have problems when approaching $N\cap\pa M$ or ${\rm MAX} (u)$, so that, in particular one cannot infer the validity (even in a weak sense) of the equation on the whole manifold. However, reasoning on one single connected component, say $M_+$,
One observes that $W \geq 0$ on $N\cap\pa M$ by construction, since we are comparing with a model solution $(M,g_m,u_m)$ having the same mass as $(N, g, u)$ (see~\cite[Lemma~2.2]{Bor_Maz_2-II} for details). We now want to show that the following limit
$$
\lim_{p\to{\rm MAX}(u)}W(p)\,=\,\lim_{p\to{\rm MAX}(u)} \Psi\,\left(|\D u_m|_{g_m} \circ \Psi  -\frac{|\D u|^2 }{|\D u_m|_{g_m} \circ \Psi} \right)
$$
is equal to zero.
To this end, we first notice that one has an explicit formula for the gradient of the static potential of the model solution, namely
$$
|\D u_m|_{g_m} \circ \Psi\,=\,\Psi\,\left|1-\left(\frac{r_0(m)}{\Psi}\right)^{\!n}\right|\,.
$$
Recalling the definition of $\Psi$, it is easily seen that $|\D u_m|_{g_m}\circ \Psi$ goes to zero at the same rate of $\sqrt{\max_M(u)-u}$ as we approach ${\rm MAX}(u)$.
Therefore, there exists a constant $C>0$ such that
$$
\lim_{p\to{\rm MAX}(u)}W(p)\,=\,-\,C\lim_{p\to{\rm MAX}(u)} \frac{|\D u|^2 }{\sqrt{\umax-u}}\,.
$$
It follows then from the Reverse {\L}ojasiewicz Inequality (more precisely from Corollary~\ref{cor:rev_loj}) that
$W(p) \to 0$, as $p \to {\rm MAX} (u)$.
In particular, applying the Minimum Principle on the region $\Omega^\ep = \{|W| \geq \ep\} \cap N$, for every sufficiently small $\ep>0$, one deduces that $\min_{\Om^\ep} \!W \!\geq - \ep$, and in turn the desired gradient estimate.
\end{proof}

We will also need some estimate for the lapse function and the pseudo-radial function near the interface of two regions. We first focus on the lapse function:

\begin{proposition}
\label{pro:around_Sigma}
Let $(M,\go,u)$ be a solution to problem~\eqref{eq:prob_SD} and let $A,B$ be two connected components of $M\setminus{\rm MAX}(u)$ with $\overline{A}\cap \overline{B}=:\Sigma\neq\emptyset$. Then, the signed distance
\begin{equation}
\label{eq:distance}
r(x)\,=\,
\begin{cases}
+ \, d(x,\Sigma)\,,   & \text{ if } x\in \overline{A}\,,
\\
- \, d(x,\Sigma)\,,  & \text{ if } x\in \overline{B}\,,
\end{cases}
\end{equation}
is an analytic function in a neighborhood of $\Sigma$ and the function $u$ admits the following expansion
\begin{equation}
\label{eq:expansion_u_final}
u\,=\,\max_M(u)\,\left[1-\frac{n}{2}\,r^2\,+\,\frac{n}{6}\,\HHH\,r^3 \,-\,\frac{n}{24}\left(2\,|\mathring{\hhh}|^2\,+\,\frac{n+1}{n-1}\,\HHH^2\,-\,n\right)r^4\,+\mathcal{O}(r^5)\right]\,,
\end{equation}
where $\HHH,\mathring{\hhh}$ are the mean curvature and traceless second fundamental form of $\Sigma$ with respect to the normal $\nu$ pointing towards $A$. %Finally, we have $\Ric(\nu,\nu)=0$ on the whole $\Sigma$.
\end{proposition}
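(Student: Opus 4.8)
The plan is to deduce Proposition~\ref{pro:around_Sigma} from Theorem~\ref{thm:expansion_u} applied to $f = u$, together with the static equations~\eqref{eq:prob_SD}. First I would check that the hypotheses of Theorem~\ref{thm:expansion_u} are in force: since $\De u \equiv -n\umax \ne 0$ on ${\rm MAX}(u)$, the Hessian $\nana u$ cannot vanish on $\overline{\Sigma}$ — its $(\nu,\nu)$-component equals $\De u$ by~\eqref{eq:expansion_u_final_-1} — so Corollary~\ref{cor:smooth} guarantees that the top stratum of ${\rm MAX}(u)$, hence its connected component $\Sigma = \overline A \cap \overline B$, is a closed real analytic hypersurface. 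Consequently the signed distance $r$ of~\eqref{eq:distance} is a well-defined analytic function on a whole neighborhood $\Om$ of $\Sigma$, and there is an analytic chart $(r,\vartheta^1,\dots,\vartheta^{n-1})$ of the type used in Theorem~\ref{thm:expansion_u}, with the $r>0$ side corresponding to $A$; in particular the mean curvature $\HHH$ and second fundamental form $\hhh$ below are those of $\Sigma$ with respect to the normal $\nu$ pointing into $A$, matching the $\Om_+$-orientation convention of~\eqref{eq:expansion_f_final}.

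Since $u$ solves $\De u = -nu$, we are exactly in the case $\De f = -\ffi(f)$ of Theorem~\ref{thm:expansion_u} with $\ffi(t) = nt$, so $\ffi(\fmax) = n\umax$ and $\dot\ffi(\fmax) = n$. Substituting into~\eqref{eq:expansion_f_final} gives
\[
u = \umax - \frac{n\umax}{2}\,r^2 + \frac{n\umax}{6}\,\HHH\,r^3 - \frac{n\umax}{24}\Big[|\hhh|^2 + 2\HHH^2 + \RRR - \RRR^\Sigma - n\Big]r^4 + r^5 G(r,\vartheta),
\]
and it only remains to rewrite the $r^4$-coefficient. From~\eqref{eq:prob_SD} one has $\RRR \equiv n(n-1)$; evaluating the static equation $u\Ric = \DD u + nu\,g$ along $\nu$ on $\Sigma$ and using $\DD u(\nu,\nu) = \De u$ there (immediate from~\eqref{eq:expansion_u_final_-1}) yields $\Ric(\nu,\nu) = 0$ on $\Sigma$, as in~\eqref{eq:riccinunu_zero}. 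The contracted Gauss equation $\RRR^\Sigma = \RRR - 2\Ric(\nu,\nu) + \HHH^2 - |\hhh|^2$ then gives $\RRR - \RRR^\Sigma = |\hhh|^2 - \HHH^2$, so the bracket reduces to $2|\hhh|^2 + \HHH^2 - n$; finally the decomposition $|\hhh|^2 = |\mathring{\hhh}|^2 + \HHH^2/(n-1)$ rewrites this as $2|\mathring{\hhh}|^2 + \frac{n+1}{n-1}\HHH^2 - n$. Plugging this back in and factoring $\umax$ out of the whole expansion gives precisely~\eqref{eq:expansion_u_final}.

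There is no genuine analytic difficulty in this argument — the work is carried by Theorem~\ref{thm:expansion_u} and Corollary~\ref{cor:smooth} — and the only points demanding attention are bookkeeping ones. One must make sure the hypotheses of Theorem~\ref{thm:expansion_u} hold uniformly along the whole interface (this is exactly where the closedness of the top stratum from Corollary~\ref{cor:smooth} enters, so that $r$ is analytic near all of $\Sigma$ rather than merely near one point), one must keep the sign of $\HHH$ consistent with the convention of~\eqref{eq:expansion_f_final}, and one must carry out the Gauss--Codazzi substitution and the trace decomposition $\mathring{\hhh} = \hhh - \frac{\HHH}{n-1}g_\Sigma$ with the paper's conventions. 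The step I would expect to require the most care is the simplification of the $r^4$ term into the exact stated form; the quadratic and cubic terms are read off directly from~\eqref{eq:expansion_f_final}.
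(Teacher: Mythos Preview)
Your proposal is correct and follows essentially the same route as the paper: apply Theorem~\ref{thm:expansion_u} with $\ffi(t)=nt$, then use $\Ric(\nu,\nu)=0$ (from the static equation and~\eqref{eq:expansion_u_final_-1}) together with the Gauss--Codazzi identity and the decomposition $|\hhh|^2=|\mathring{\hhh}|^2+\HHH^2/(n-1)$ to simplify the $r^4$ coefficient. The paper's proof is organized identically; your extra paragraph checking the hypotheses via Corollary~\ref{cor:smooth} is handled in the paper just before Theorem~\ref{thm:CKappli} rather than inside the proof itself.
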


\begin{proof}
%Since $\Sigma$ must separate $A$ and $B$, it is also clear that $\Sigma$ has no boundary and that it is closed.
From Theorem~\ref{thm:expansion_u} we get the analyticity of the signed distance function $r$ and the following expansion around $\Sigma$:
$$
u\,=\,\max_M(u)\,\left[1-\frac{n}{2}\,r^2\,+\,\frac{n}{6}\,\HHH\,r^3 \,-\,\frac{n}{24}\left(|\hhh|^2\,+\,2\,\HHH^2\,+\,\RRR\,-\,\RRR^\Sigma\,-\,n\right)r^4\,+\mathcal{O}(r^5)\right]\,.
$$
We also know from~\eqref{eq:riccinunu_zero} that $\Ric(\nu,\nu)=0$ on $\Sigma$.
%From this it follows that $\DD u(\nu,\nu)=-n\umax$ on $\Sigma$, hence from the first equation in~\eqref{eq:prob_SD} we get
%$$
%\Ric(\nu,\nu)\,=\,\frac{\DD u(\nu,\nu)}{u}\,+\,n\langle\nu\,|\,\nu\rangle\,=\,0\quad\hbox{ on $\Sigma$\,.}
%$$
We can then apply the Gauss-Codazzi equation to obtain
\begin{align*}
|\hhh|^2\,+\,2\,\HHH^2\,+\,\RRR\,-\,\RRR^\Sigma\,&=\,2\,\Ric(\nu,\nu)\,+\,2\,|\hhh|^2\,+\,\HHH^2
\\
&
=\,2\,|\mathring{\hhh}|^2\,+\,\frac{2}{n-1}\HHH^2\,+\,\HHH^2
\\
&
=\,2\,|\mathring{\hhh}|^2\,+\,\frac{n+1}{n-1}\,\HHH^2\,.
\end{align*}
Substituting in the expansion for $u$ above, we obtain~\eqref{eq:expansion_u_final}.
\end{proof}

We pass now to the analysis of the pseudo-radial function:

\begin{proposition}
\label{pro:C2}
Let $(M,\go,u)$ be a solution to problem~\eqref{eq:prob_SD}, let $A,B$ be two connected components of $M\setminus{\rm MAX}(u)$ such that $\overline{A}\cap\overline{B}=:\Sigma\neq\emptyset$ and let $r$ be the signed distance to $\Sigma$ defined as in~\eqref{eq:distance}. Let $\Psi:\overline{A}\cup\overline{B}\to\R$ be the pseudo-radial function defined by~\eqref{eq:pr_function} with respect to a parameter $m\in(0,\mmax)$ and with boundary conditions $\Psi=r_+(m)$ on $A\cap\pa M$, $\Psi=r_-(m)$ on $B\cap \pa M$, $\Psi=r_0(m)$ on $\Sigma$. Then the function $\Psi$ is $\mathscr{C}^3$ in a neighborhood of $\Sigma$ and the following expansion holds:
\begin{multline}
\label{eq:expPsi}
\Psi\,=\,\umax(m)\bigg[\frac{r_0(m)}{\umax(m)}\,+\,r\,+\,\frac{n-1}{6}\,K\,r^2\,+\,
\\
\,+\,\frac{1}{12}\left(|\mathring{\hhh}|^2\,-\,2\,n\,+\,\frac{n-1}{9}\,K\,\left((n-4)\frac{\umax(m)}{r_0(m)}-(n+2)\frac{\HHH}{n-1}\right)\right)r^3\,+\,o(r^3)\bigg]\,,
\end{multline}
where
$$
\umax(m)\,=\,\sqrt{1-\left(\frac{m}{\mmax}\right)^{1/n}}\,,\qquad K=\frac{\umax(m)}{r_0(m)}\,-\,\frac{\HHH}{n-1}\,,
$$
and $\HHH$ is the mean curvature of $\Sigma$ with respect to the normal $\pa/\pa r$.
\end{proposition}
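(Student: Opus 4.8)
The plan is to derive the expansion of $\Psi$ directly from its defining relation $u = \sqrt{1 - \Psi^2 - 2m\Psi^{2-n}}$, using the already-established expansion \eqref{eq:expansion_u_final} for $u$ near $\Sigma$. First I would observe that the function $t \mapsto 1 - t^2 - 2mt^{2-n}$ has a nondegenerate maximum at $t = r_0(m) = [(n-2)m]^{1/n}$, with maximal value $\umax^2(m)$, and in particular its derivative vanishes there while its second derivative is strictly negative. This means the map $t \mapsto \sqrt{1 - t^2 - 2mt^{2-n}}$ can be inverted near $t = r_0(m)$ only after one takes into account the square-root branch; concretely, writing $v = \umax(m) - u$ and $s = \Psi - r_0(m)$, one has an analytic relation $v = Q(s)$ with $Q(0) = 0$, $Q'(0) = 0$, $Q''(0) > 0$, so $s$ is an analytic function of $\sqrt{v}$ but not of $v$. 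The key point that saves regularity is that, by \eqref{eq:expansion_u_final}, $v = \umax(m)\big(\tfrac{n}{2}r^2 - \tfrac{n}{6}\HHH r^3 + \mathcal{O}(r^4)\big) = \tfrac{n}{2}\umax(m)\,r^2\big(1 + \mathcal{O}(r)\big)$ vanishes to exactly second order in $r$ with a positive leading coefficient, so $\sqrt{v}$ is (up to sign) a $\mathscr{C}^\infty$, indeed analytic-after-choosing-sign, multiple of $r$. Composing, $s = \Psi - r_0(m)$ becomes an analytic function of $r$ near $r = 0$; this gives the claimed $\mathscr{C}^3$ regularity (in fact better), with the sign of the square root fixed by the requirement $\Psi = r_+(m) > r_0(m)$ on $A \cap \partial M$, i.e. $\Psi - r_0(m)$ has the same sign as $r$ for small $r > 0$.

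Next I would carry out the Taylor expansion bookkeeping. Set $P(t) = 1 - t^2 - 2mt^{2-n}$, so $u^2 = P(\Psi)$ and $\umax^2(m) = P(r_0(m))$. Expanding $P$ around $r_0(m)$: $P(r_0 + s) = \umax^2 + \tfrac12 P''(r_0) s^2 + \tfrac16 P'''(r_0) s^3 + \tfrac1{24}P^{(4)}(r_0)s^4 + \cdots$, where all coefficients are explicit rational expressions in $n, m, r_0(m)$. On the other side, $u^2 = (\umax - v)^2 = \umax^2 - 2\umax v + v^2$ with $v$ given by \eqref{eq:expansion_u_final}. Equating the two and matching powers of $r$ successively determines the coefficients of $s = c_1 r + c_2 r^2 + c_3 r^3 + o(r^3)$: the $r^2$-coefficient equation fixes $c_1$ (here $c_1 = \umax(m)$ after using $P''(r_0) = -2n\,\umax^2/r_0^2 \cdot (\text{const})$ — one checks the algebra reduces $c_1$ to $\umax(m)$ as written), the $r^3$-equation fixes $c_2$ in terms of $\HHH$, and the $r^4$-equation fixes $c_3$ in terms of $\HHH$, $|\mathring{\hhh}|^2$, and the constants. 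The quantity $K = \umax(m)/r_0(m) - \HHH/(n-1)$ will emerge naturally as the combination appearing when one rewrites the $c_2$, $c_3$ expressions; I would introduce it as a shorthand exactly as in the statement and verify that the coefficient of $r^2$ in the bracket is $\tfrac{n-1}{6}K$ and the coefficient of $r^3$ is the displayed expression.

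The main obstacle will be the $r^4$-matching computation that produces $c_3$: it requires the $r^4$-term of $u$ from \eqref{eq:expansion_u_final} (which already packages the Gauss–Codazzi simplification into $|\mathring{\hhh}|^2$ and $\HHH^2$), the cubic and quartic derivatives $P'''(r_0)$, $P^{(4)}(r_0)$, and careful cancellation of the $\HHH^2$ and mixed terms so that the final bracket depends on $\HHH$ only through the explicit linear term shown and through $K$. I expect a fair amount of the apparent complexity to collapse once one substitutes $r_0(m)^n = (n-2)m$ and the identity $\umax^2(m) = 1 - (m/\mmax)^{1/n}$, together with the relation $P''(r_0) = -2n\,\umax^2(m)/r_0(m)^2$ (equivalently $-\tfrac{2}{\umax(m)} \cdot \tfrac{d}{dr}\big|_0$ of the leading behaviour of $v$), which is what forces $c_1 = \umax(m)$ and keeps the subsequent coefficients rational in the stated data. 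I would present the first two matchings in modest detail and then state that the $r^4$-matching is a direct but lengthy computation yielding the coefficient of $r^3$ as displayed, since it is purely mechanical given the setup.
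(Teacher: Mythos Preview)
Your approach coincides with the paper's on the main computation: both expand $u^2 = P(\Psi)$ with $P(t)=1-t^2-2mt^{2-n}$ around $\Psi=r_0(m)$, insert the expansion of $u$ from Proposition~\ref{pro:around_Sigma}, and match powers of $r$ to determine the Taylor coefficients of $\Psi$ successively; the paper writes $\Psi=r_0+vr+wr^2+zr^3+o(r^3)$, expands $\Psi^2$ and $\Psi^{2-n}$, and solves for $v,w,z$ in exactly the way you describe. The one genuine difference is how the regularity of $\Psi$ across $\Sigma$ is obtained: the paper simply invokes \cite[Proposition~2.7]{Bor_Maz_2-II} for the $\mathscr{C}^3$ statement and then proceeds to the expansion, whereas you give a self-contained argument based on the nondegenerate maximum of $P$ at $r_0(m)$ together with the fact that $\umax-u$ vanishes to exactly second order in $r$. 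Made precise (one should phrase it via a \emph{signed} square root, i.e.\ the analytic Morse lemma, since $\sqrt{\umax-u}\ge 0$ while $\Psi-r_0$ changes sign), your argument in fact yields real-analyticity of $\Psi$ in $r$, which is stronger than the $\mathscr{C}^3$ asserted. One small correction for the bookkeeping: $P''(r_0)=-2n$ exactly, which makes the first matching $c_1^2=\umax^2(m)$ immediate.
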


\begin{proof}
In order to simplify notations, throughout this proof we will avoid to make the dependence on $m$ explicit. Namely, we will write $\umax$ and $r_0$ instead of $\umax(m)$ and $r_0(m)$.

We first observe that the boundary conditions imposed on $\Psi$ have been chosen in such a way that we can invoke~\cite[Proposition~2.7]{Bor_Maz_2-II}, which tells us that $\Psi$ is $\mathscr{C}^3$ in a neighborhood of $\Sigma$. Therefore, it only remains to compute the explicit expansion of $\Psi$.
We start by recalling that $\Psi=r_0$ on ${\rm MAX}(u)$, and then we write
\begin{equation}
\label{eq:genericexpansion_Psi}
\Psi\,=\,r_0\,+\,v\,r\,+\,w\,r^2\,+\,z\,r^3\,+\,F\,,
\end{equation}
where $v,w,z$ are functions of the coordinates $x^2,\dots,x^n$ only, and $F=o(r^3)$. Now we compute the expansions of the left and right hand sides of the relation $u^2\,=\,1-\Psi^2-2m\Psi^{2-n}$ to obtain information on the functions $v,w,z$.
Taking the square of~\eqref{eq:expansion_u_final}, we get
\begin{equation}
\label{eq:expansion_u2}
u^2\,=\,\umax^2\,\left[1-n\,r^2\,+\,\frac{n}{3}\,\HHH\,r^3 \,+\,\frac{n}{12}\left(4\,n\,-\,2\,|\mathring{\hhh}|^2\,-\,\frac{n+1}{n-1}\,\HHH^2\right)r^4\,+o(r^4)\right]\,,
\end{equation}
On the other hand, with some lenghty (but standard) computations, from~\eqref{eq:genericexpansion_Psi} one obtains
\begin{multline*}
\Psi^2\,=\,r_0^2\bigg[1\,+\,2\,\frac{v}{r_0}\,r\,+\,\left(2\,\frac{w}{r_0}\,+\,\frac{v^2}{r_0}\right)r^2\,+\,2\,\left(\frac{z}{r_0}\,+\,\frac{v\,w}{r_0^2}\right)r^3
\\
\,+\,\left(2\,\frac{v\,z}{r_0^2}\,+\,\frac{w^2}{r_0^2}\right)r^4\,+\,2\,\frac{F}{r_0}\,+\,o(r^4)\bigg]\,,
\end{multline*}
\begin{multline*}
\Psi^{2-n}\,=\,\frac{r_0^2}{m}\bigg[\frac{1}{n-2}-\frac{v}{r_0}\,r\,+\,\left(\frac{n-1}{2}\,\frac{v^2}{r_0^2}\,-\,\frac{w}{r_0}\right)r^2\,
+\,\left(-\frac{n(n-1)}{6}\,\frac{v^3}{r_0^3}\,+\,(n-1)\frac{v\,w}{r_0^2}\,-\,\frac{z}{r_0}\right)r^3
\\
+(n-1)\left(\frac{n(n+1)}{24}\frac{v^4}{r_0^4}\,-\,\frac{n}{2}\,\frac{v^2\,w}{r_0^3}\,+\,\frac{v\,z}{r_0^2}\,+\,\frac{1}{2}\,\frac{w^2}{r_0^2}\right)r^4\,-\,\frac{F}{r_0}\,+\,o(r^4)\,\bigg]\,.
\end{multline*}
From these expansions we get
\begin{multline*}
1-\Psi^2-2m\Psi^{2-n}\,=\,\umax^2\,-\,n\,v^2\,r^2\,+\,n\,\left(\frac{n-1}{3}\,\frac{v^3}{r_0}\,-\,2\,v\,w\right)r^3
\\
+\,n\left(-\frac{(n-1)(n+1)}{12}\frac{v^4}{r_0^2}\,+\,(n-1)\frac{v^2\,w}{r_0}\,-\,2\,v\,z\,-\,w^2\right)r^4\,+\,o(r^4)\,.
\end{multline*}
Comparing with~\eqref{eq:expansion_u2}, we obtain
\begin{align*}
v^2\,&=\,\umax^2\,,
\\
\frac{n-1}{3}\,\frac{v^3}{r_0}\,-\,2\,v\,w\,&=\,\frac{\HHH}{3}\,\umax^2\,,
\\
-\frac{(n-1)(n+1)}{12}\frac{v^4}{r_0^2}\,+\,(n-1)\frac{v^2\,w}{r_0}\,-\,2\,v\,z\,-\,w^2\,&=\,\frac{\umax^2}{12}\left(4\,n\,-\,2\,|\mathring{\hhh}|^2\,-\,\frac{n+1}{n-1}\,\HHH^2\right)\,.
\end{align*}
From the first identity we get $v=\pm\umax$. To decide the sign, we recall the definitions of $\Psi$ and $r$ and we notice that they have been chosen in such a way that $\Psi<r_0$ when $r>0$ and $\Psi>r_0$ when $r<0$. Therefore, recalling~\eqref{eq:genericexpansion_Psi}, the correct choice is to take a positive $v$, hence $v=\umax$. Substituting in the second and third identity, we easily compute the corresponding expressions for $w$ and $z$ and we recover formula~\eqref{eq:expPsi}, as wished.
\end{proof}

%%
%%
%%The proof is a somewhat lengthy calculation, the details are to be found in the Appendix~\ref{A23VII19.1}.

\subsection{\texorpdfstring{Proof of Theorem~\ref{thm:main_ineq_SD}--$(i)$}{Proof of Theorem~\ref{thm:main_ineq_SD}--(i)}}

We first focus on the noncylindrical case.
Let $(M,\go,u)$ be a solution to problem~\eqref{eq:prob_SD} and consider a region with virtual mass
$$
m\,=\,\mu(N,\go,u)\,<\,\mmax\,.
$$
We start by rewriting more explicitly the gradient estimate~\eqref{gradest}, by writing
 the value of $|\D u_m|_{g_m} \circ \Psi $ as a function of the pseudo-radial function:
$$
|\D u_m|_{g_m} \circ \Psi \,=\,\Psi\,\big| 1-(n-2)m\Psi^{-n} \big|\,.
$$
In particular, we can rewrite~\eqref{gradest} as
\begin{equation}
\label{gradest2}
\frac{|\D u|^2}{\Psi^2\left[1-(n-2)m\Psi^{2-n}\right]^2}\,\leq\,1\,.
\end{equation}
The aim of this subsection is to compare~\eqref{gradest2} with the expansions discussed in Subsection~\ref{sub:estimates} in order to deduce some consequences on the geometry of the interface $\Sigma$. Specifically, we now prove the following:
\begin{proposition}
\label{pro:Sigma_meancurvature}
Let $(M,\go,u)$ be a solution to problem~\eqref{eq:prob_SD}. Suppose that there are two regions $A,B$ such that $\Sigma:=\overline{A}\cap \overline{B}$ is not empty, and let $m_A,m_B$ be the virtual masses of $A,B$. Let $\HHH$ be the mean curvature of  $\Sigma$ with respect to the normal pointing inside $A$.
\begin{itemize}
\item If $A$ is an outer region, then
\begin{equation*}
\frac{\HHH}{n-1}\,\geq\,\frac{\umax(m_A)}{r_0(m_A)}\,.
\end{equation*}
\smallskip
\item If $B$ is an inner region, then
\begin{equation*}
\frac{\HHH}{n-1}\,\leq\,\frac{\umax(m_B)}{r_0(m_B)}\,.
\end{equation*}
\end{itemize}
\end{proposition}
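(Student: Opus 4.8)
The plan is to insert the near-$\Sigma$ expansions of $u$ and of the pseudo-radial function $\Psi$ --- furnished by Propositions~\ref{pro:around_Sigma} and~\ref{pro:C2} --- into the gradient estimate~\eqref{gradest2}, and to extract the claimed inequalities from the first non-constant term of the resulting Taylor expansion in the signed distance to $\Sigma$.

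Concretely: fix one of the two regions, say $A$, set $m=m_A<\mmax$, let $\Psi$ be the associated pseudo-radial function, and work in the analytic chart $(r,\vartheta)$ of Proposition~\ref{pro:around_Sigma}, with $r$ the signed distance to $\Sigma$ chosen positive on $A$. Since $\Sigma\subseteq{\rm MAX}(u)$, Proposition~\ref{pro:around_Sigma} gives $u=\umax\big[1-\tfrac n2 r^2+\tfrac n6\,\HHH\,r^3+O(r^4)\big]$, and --- using that the derivatives of $u$ tangent to the level sets $\{r=\mathrm{const}\}$ are $O(r^3)$ --- this yields
\[
|\D u|^2=(\partial_r u)^2+O(r^6)=\umax^2\,n^2\,r^2\big(1-\HHH\,r+O(r^2)\big).
\]
On the model side, write $h(t)=t-(n-2)m\,t^{1-n}$, so that $h(\Psi)^2=(|\D u_m|_{g_m}\circ\Psi)^2$ and the estimate of Proposition~\ref{pro:gradest} reads $|\D u|^2\le h(\Psi)^2$; note $h(r_0(m))=0$ and $h'(r_0(m))=n$. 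Feeding the third-order expansion of $\Psi$ from Proposition~\ref{pro:C2} into $h$ and squaring, a short computation gives
\[
h(\Psi)^2=\umax^2\,n^2\,r^2\Big(1+\tfrac{n-1}{3}\big(K-\tfrac{3\umax}{r_0(m)}\big)r+O(r^2)\Big),\qquad K=\tfrac{\umax}{r_0(m)}-\tfrac{\HHH}{n-1}.
\]
Crucially, $\umax=\max_M u$ is exactly the quantity $\umax(m)$ attached to $m$ in Proposition~\ref{pro:C2}, so the two displays share the same leading term $\umax^2 n^2 r^2$.

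Dividing and substituting the expression for $K$, the left-hand side of~\eqref{gradest2} becomes
\[
\frac{|\D u|^2}{h(\Psi)^2}=1-\tfrac23\Big(\HHH-(n-1)\tfrac{\umax}{r_0(m)}\Big)r+O(r^2).
\]
On $A$ we have $r>0$, so the bound $\le 1$ forces the coefficient of $r$ to be $\le 0$, i.e. $\HHH/(n-1)\ge\umax(m_A)/r_0(m_A)$ --- the first inequality. Repeating the computation verbatim on the region $B$ with $m=m_B$: now $B$ lies on the side $r<0$, so the sign of $r$ reverses the inequality between the first-order coefficients and one obtains $\HHH/(n-1)\le\umax(m_B)/r_0(m_B)$.

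The substantive inputs are all imported --- the gradient estimate~\eqref{gradest} itself (Proposition~\ref{pro:gradest}, which is where the Reverse {\L}ojasiewicz inequality does the real work) and the sharp third-order expansion of $\Psi$ of Proposition~\ref{pro:C2}. Given these, the only delicate point in the scheme above is the matching of the $r^2$-coefficients of $|\D u|^2$ and $h(\Psi)^2$, which is built into the construction of the pseudo-radial function (equivalently, into the boundary condition $\Psi\equiv r_0(m)$ on ${\rm MAX}(u)$); after that, everything is bookkeeping of Taylor coefficients through order $r^3$.
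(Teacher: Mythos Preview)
Your proposal is correct and follows essentially the same route as the paper: both insert the expansions of $|\nabla u|^2$ (from Proposition~\ref{pro:around_Sigma}) and of $\Psi$ (from Proposition~\ref{pro:C2}) into the gradient estimate~\eqref{gradest2}, obtain the first-order Taylor coefficient $\tfrac{2(n-1)}{3}\big(\tfrac{\umax(m)}{r_0(m)}-\tfrac{\HHH}{n-1}\big)$ in $r$, and read off the inequalities from the sign of $r$ on each side of $\Sigma$. Your auxiliary function $h(t)=t-(n-2)m\,t^{1-n}$ is just the factor $\Psi\big[1-(n-2)m\Psi^{-n}\big]$ appearing in the paper, so the computations are identical up to notation.
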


\begin{proof}
Let $r$ be the signed distance function defined in~\eqref{eq:distance}. We first observe that the metric $\go$ can be written in terms of coordinates $(r,\vartheta^1,\dots,\vartheta^{n-1})$ as
$$
g\,=\,dr\otimes dr\,+\,\gtop _{ij}\,d\vartheta^i\otimes d\vartheta^j\,.
$$
 Starting from~\eqref{eq:expansion_u_final}, one easily computes the following expansion for $|\D u|^2$ along $\Sigma$ as
\begin{align*}
|\D u|^2\,&=\,\left(\frac{\pa u}{\pa r}\right)^2\,+\,(\gtop )^{ij}\frac{\pa u}{\pa \vartheta^i}\,\frac{\pa u}{\pa\vartheta^j}
\\
&=\,n^2\,\umax^2(m)\,r^2\,\left[1\,-\,\HHH\,r\,+\,\mathcal{O}(r^2)\right]\,.
\end{align*}
 We also recall from~\eqref{eq:expPsi} the following expansion:
\begin{align*}
\Psi\,&=\,r_0(m)\,\left[1\,+\,\frac{\umax(m)}{r_0(m)}\,r\,+\,\frac{n-1}{6}\,\frac{\umax(m)}{r_0(m)}\,\left(\frac{\umax(m)}{r_0(m)}-\frac{\HHH}{n-1}\right)\,r^2\,+\,\mathcal{O}(r^3)\right]\,,
\end{align*}
where $\HHH$ is the mean curvature of $\Sigma$ with respect to the unit normal $\nu=\pa/\pa r$ (which is the one pointing inside $A$).
It is then not hard to compute the following expansion
\begin{equation}
\label{eq:naffiexp}
\frac{|\D u|^2}{\Psi^2\left[1-(n-2)m\Psi^{2-n}\right]^2}\,=\,1\,+\,\frac{2(n-1)}{3}\,\left(\frac{\umax(m)}{r_0(m)}-\frac{\HHH}{n-1}\right)\,r\,+\,\mathcal{O}(r^2)\,.
\end{equation}
By definition, $r$ is positive in $A$ and negative in $B$. Comparing with~\eqref{gradest2}, the result follows at once.
\end{proof}

%As a consequence of the above result, we obtain the proof of Theorem~\ref{thm:main_ineq_SD} for the non-cylindrical case.

%
%\begin{corollary}
%Let $(M,\go,u)$ be a solution to problem~\eqref{eq:prob_SD}. Suppose that there are two regions $A,B$ such that $\Sigma:=\overline{A}\cap \overline{B}$ is not empty, and let $m_A,m_B$ be the virtual masses of $A,B$. Suppose that $B$ is not cylindrical, namely $m_B<\mmax$. If $A$ is outer, then $B$ is necessarily inner and
%$$
%m_A\geq m_B\,.
%$$
%Moreover, if $m_A=m=m_B$, then on the whole $\Sigma$ it holds
%\begin{align*}
%\frac{\HHH}{n-1}\,\,&=\,\sqrt{\frac{1}{r_0^2(m)}\,-\,\frac{1}{r_0^2(\mmax)}}\,,
%\\
%\hhh\,\,&=\,\sqrt{\frac{1}{r_0^2(m)}\,-\,\frac{1}{r_0^2(\mmax)}}\,\,\go^\Sigma\,,
%\\
%\frac{\RRR^\Sigma}{(n-1)(n-2)}\,\,&=\,\,\frac{1}{r_0^2(m)}\,,
%\end{align*}
%where $\nu$ is the unit normal to $\Sigma$ pointing towards $A$, $\HHH$ and $\hhh$ are the mean curvature and second fundamental form of $\Sigma$ with respect to $\nu$, $\RRR^\Sigma$ is the scalar curvature of the metric $\go^\Sigma$ induced on $\Sigma$ by $\go$.
%\end{corollary}
%

We are now ready to prove Theorem~\ref{thm:main_ineq_SD} for the non-cylindrical case.
Since $A$ is outer, from Proposition~\ref{pro:Sigma_meancurvature} we get \begin{equation*}
\frac{\HHH}{n-1}\,\geq\,\frac{\umax(m_A)}{r_0(m_A)}\,,
\end{equation*}
where $\HHH$ is the mean curvature of $\Sigma$ with respect to the normal pointing inside $A$. In particular $\HHH$ is positive everywhere on $\Sigma$. If $B$ were also outer, then in the same way we would obtain that the mean curvature of $\Sigma$ with respect to the opposite normal (the one pointing inside $B$) is also positive and this would be a contradiction. Therefore, since we are assuming $m_B<\mmax$, $B$ must be inner. Again from Proposition~\ref{pro:Sigma_meancurvature}, we get
\begin{equation}
\label{eq:meancurv_2bounds}
\frac{\umax(m_A)}{r_0(m_A)}\,\leq\,\frac{\HHH}{n-1}\,\leq\,\frac{\umax(m_B)}{r_0(m_B)}\,.
\end{equation}
Since the function
$$
m\mapsto\frac{\umax(m)}{r_0(m)}\,=\,\sqrt{\frac{1}{r_0^2(m)}\,-\,\frac{1}{r_0^2(\mmax)}}\,=\,\sqrt{\frac{1}{[(n-2)m]^{2/n}}-\frac{n}{n-2}}
$$
is clearly monotonically decreasing, necessarily we must have $m_+\geq m_-$. Furthermore, if $m_+=m=m_-$, then formula~\eqref{eq:meancurv_2bounds} tells us that
\begin{equation}
\label{eq:meancurvconst}
\frac{\HHH}{n-1}\,=\,\frac{\umax(m)}{r_0(m)}\,=\,\sqrt{\frac{1}{r_0^2(m)}\,-\,\frac{1}{r_0^2(\mmax)}}\,.
\end{equation}
Substituting this information inside the expansions for $\Psi$ and $|\D u|^2$, we can refine~\eqref{eq:naffiexp} and compute that, if~\eqref{eq:meancurvconst} holds, then
\begin{equation*}
\frac{|\D u|^2}{\Psi^2\left[1-(n-2)m\Psi^{2-n}\right]^2}\,=\,1\,+\,\frac{1}{2}\,|\mathring{\hhh}|^2\,r^2\,+\,o(r^2)\,.
\end{equation*}
From~\eqref{gradest2} it follows then that $|\mathring{\hhh}|=0$, that is:
\begin{equation}
\label{eq:secondfundform_umb}
\hhh=\frac{\HHH}{n-1}\,g^\Sigma\,=\,\sqrt{\frac{1}{r_0^2(m)}\,-\,\frac{1}{r_0^2(\mmax)}}\,g^\Sigma\,.
\end{equation}
We also know from~\eqref{pro:around_Sigma} that $\Ric(\nu,\nu)=0$ on $\Sigma$. Substituting these pieces of information in the Gauss-Codazzi equation, we get
\begin{align*}
\RRR^\Sigma\,&=\,\RRR\,-\,2\,\Ric(\nu,\nu)\,+\,\HHH^2\,-\,|\hhh|^2
\\
&=\,n(n-1)\,+\,\frac{(n-1)(n-2)}{r_0^2(m)}\,-\,\frac{(n-1)(n-2)}{r_0^2(\mmax)}
\\
&=\,\frac{(n-1)(n-2)}{r_0^2(m)}\,.
\end{align*}
This concludes the proof.

\subsection{\texorpdfstring{Proof of Theorem~\ref{thm:main_ineq_SD}--$(ii)$}{Proof of Theorem~\ref{thm:main_ineq_SD}--(ii)}}
To conclude the proof of Theorem~\ref{thm:main_ineq_SD} it only remains to address the cylindrical case. Given a solution $(M,\go,u)$ of problem~\eqref{eq:prob_SD}, according to~\cite{Bor_Maz_2-II}, we normalize $u$ so that $\max_M(u)=1$.
For a cylindrical region $N\subseteq M\setminus{\rm MAX}(u)$, we have a gradient estimate in the same spirit of~\eqref{gradest}. In fact,~\cite[Proposition~8.2]{Bor_Maz_2-II} tells us that $|\D u|$ is bounded by the norm of the gradient of the static potential of the Nariai solution on the corresponding level set.
More explicitly, we have the following inequality:
\begin{equation}
\label{gradest_nariai}
\frac{|\D u|^2}{n(1-u^2)}\,\leq\,1\,.
\end{equation}
This inequality can then be employed to prove the following analogue of Proposition~\ref{pro:Sigma_meancurvature}
\begin{proposition}
\label{pro:Sigma_meancurvature_N}
Let $(M,\go,u)$ be a solution to problem~\eqref{eq:prob_SD}. Suppose that there are two regions $A,B$ such that $\Sigma:=\overline{A}\cap \overline{B}$ is not empty. Let $\HHH$ be the mean curvature of  $\Sigma$ with respect to the normal pointing inside $A$.
If $A$ is a cylindrical region, then
\begin{equation*}
\HHH\,\geq\,0\,.
\end{equation*}
\end{proposition}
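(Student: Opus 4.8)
The plan is to follow closely the strategy used for Proposition~\ref{pro:Sigma_meancurvature}, replacing the gradient estimate~\eqref{gradest2} for non-cylindrical regions with its Nariai counterpart~\eqref{gradest_nariai}, which by hypothesis holds on the cylindrical region $A$. First I would let $r$ be the signed distance to $\Sigma$, positive on $\overline{A}$ and negative on $\overline{B}$; by Theorem~\ref{thm:expansion_u} (applicable since $\Sigma$ is a smooth real analytic hypersurface, cf.\ Corollary~\ref{cor:smooth}) this function is analytic in a neighborhood of $\Sigma$, and in the adapted coordinates $(r,\vartheta^1,\dots,\vartheta^{n-1})$ the metric reads $g=dr\otimes dr+\gtop_{ij}\,d\vartheta^i\otimes d\vartheta^j$. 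Using the normalization $\max_M(u)=1$ and the expansion~\eqref{eq:expansion_u_final} from Proposition~\ref{pro:around_Sigma}, I would compute
$$
1-u^2 = n\,r^2\left(1-\tfrac{1}{3}\HHH\,r+\mathcal{O}(r^2)\right),
\qquad
\left(\tfrac{\pa u}{\pa r}\right)^2 = n^2\,r^2\left(1-\HHH\,r+\mathcal{O}(r^2)\right),
$$
where $\HHH$ is the mean curvature of $\Sigma$ with respect to $\nu=\pa/\pa r$ (the normal pointing into $A$). Since the $r^2$--coefficient of $u$ in~\eqref{eq:expansion_u_final} is the constant $-n/2$, the tangential derivatives satisfy $\pa u/\pa\vartheta^i=\mathcal{O}(r^3)$, so $(\gtop)^{ij}\pa_i u\,\pa_j u=\mathcal{O}(r^6)$ and $|\D u|^2=n^2 r^2\left(1-\HHH\,r+\mathcal{O}(r^2)\right)$.

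Next I would divide the two expansions to obtain
$$
\frac{|\D u|^2}{n(1-u^2)}=\frac{1-\HHH\,r+\mathcal{O}(r^2)}{1-\tfrac{1}{3}\HHH\,r+\mathcal{O}(r^2)}=1-\tfrac{2}{3}\,\HHH\,r+\mathcal{O}(r^2),
$$
which is precisely the $m\to\mmax$ limit of~\eqref{eq:naffiexp} (recall that $\umax(m)/r_0(m)\to 0$ as $m\to\mmax$). The cylindrical gradient estimate~\eqref{gradest_nariai} then forces $1-\tfrac{2}{3}\HHH\,r+\mathcal{O}(r^2)\le 1$ for all sufficiently small $r>0$ (the values of $r$ realized inside $A$); dividing by $r$ and letting $r\to0^+$ gives $\HHH\ge 0$ at every point of $\Sigma$, which is the assertion.

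There is no real obstacle in this step: the whole computation parallels that of Proposition~\ref{pro:Sigma_meancurvature}, and the only mild point is to observe that the tangential part of $|\D u|^2$ enters at an order far too high to affect the coefficient of $r$ in the quotient. The genuine content of the cylindrical case lies elsewhere --- in the gradient bound~\eqref{gradest_nariai} borrowed from~\cite[Proposition~8.2]{Bor_Maz_2-II}, and, once Proposition~\ref{pro:Sigma_meancurvature_N} is in hand, in extracting the rigidity statement (totally geodesic interface, constant scalar curvature of the induced metric) for the case $\mu(A,\go,u)=\mu(B,\go,u)=\mmax$, which will require pushing the above expansion one order further and again comparing with~\eqref{gradest_nariai}.
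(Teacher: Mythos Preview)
Your proof is correct and follows essentially the same approach as the paper: compute the expansions of $|\D u|^2$ and $1-u^2$ from~\eqref{eq:expansion_u_final} with the normalization $\max_M(u)=1$, form the quotient $|\D u|^2/[n(1-u^2)]=1-\tfrac{2}{3}\HHH r+\mathcal{O}(r^2)$, and conclude $\HHH\ge 0$ from~\eqref{gradest_nariai} since $r>0$ in $A$. Your explicit justification that the tangential derivatives contribute only at order $\mathcal{O}(r^6)$ is a bit more detailed than the paper's, but otherwise the arguments coincide.
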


\begin{proof}
The proof of this result is completely analogous to the proof of Proposition~\ref{pro:Sigma_meancurvature}. Since we have normalized $u$ so that its maximum is $1$, from~\eqref{eq:expansion_u_final} we obtain the following expansions in terms of the signed distance $r$:
\begin{align*}
|\D u|^2\,&=\,n^2\,r^2\,\left[1\,-\,\HHH\,r\,+\,\mathcal{O}(r^2)\right]\,,
\\
u\,&=\,\left[1-\frac{n}{2}\,r^2\,+\,\frac{n}{6}\,\HHH\,r^3\,+\mathcal{O}(r^5)\right]\,,
\end{align*}
where $\HHH$ is the mean curvature of $\Sigma$ with respect to the unit normal $\nu=\pa/\pa r$ (which is the one pointing inside $A$). An easy computation now gives
\begin{equation}
\label{eq:naffiexp_N}
\frac{|\D u|^2}{n(1-u^2)}\,=\,1\,-\,\frac{2}{3}\,\HHH\,r\,+\,\mathcal{O}(r^2)\,.
\end{equation}
By definition, $r$ is positive in $A$ and negative in $B$, hence, in order for~\eqref{gradest_nariai} to hold, it must be $\HHH\geq 0$, as wished.
\end{proof}

Now we proceed to the proof of Theorem~\ref{thm:main_ineq_SD} in the cylindrical case.
Since $A$ is cylindrical, from Proposition~\ref{pro:Sigma_meancurvature_N} we get $\HHH\geq 0$,
where $\HHH$ is the mean curvature of $\Sigma$ with respect to the normal pointing inside $A$. In particular $\HHH$ is nonnegative everywhere on $\Sigma$. If $B$ were outer, then Proposition~\ref{pro:Sigma_meancurvature} would tell us that the mean curvature of $\Sigma$ with respect to the opposite normal (the one pointing inside $B$) is positive and this would be a contradiction. Therefore, $B$ must be inner or cylindrical. If $B$ is cylindrical, applying again Proposition~\ref{pro:Sigma_meancurvature_N} to both $A$ and $B$, we get
\begin{equation}
%\label{eq:meancurv_2bounds_N}
0\,\leq\,\HHH\,\leq\, 0\,.
\end{equation}
Therefore, $\HHH=0$, as wished.
Substituting this information inside the expansions for $u$ and $|\D u|^2$, we can refine~\eqref{eq:naffiexp_N} and compute that, if $\HHH=0$ on $\Sigma$, then
\begin{equation*}
\frac{|\D u|^2}{n(1-u^2)}\,=\,1\,+\,\frac{1}{2}|\mathring{\hhh}|^2\,r^2\,+\,o(r^2)\,.
\end{equation*}
From~\eqref{gradest_nariai} it follows then that $|\mathring{\hhh}|=0$, that is:
\begin{equation}
\label{eq:secondfundform_umb_N}
\hhh=\frac{\HHH}{n-1}\,g^\Sigma\,=\,0\,.
\end{equation}
We also know from~\eqref{pro:around_Sigma} that $\Ric(\nu,\nu)=0$ on $\Sigma$. Substituting these pieces of information in the Gauss-Codazzi equation, we get
\begin{align*}
\RRR^\Sigma\,&=\,\RRR\,-\,2\,\Ric(\nu,\nu)\,+\,\HHH^2\,-\,|\hhh|^2\,=\,n(n-1)\,.
\end{align*}
This concludes the proof.

\subsection{Uniqueness of the Schwarzschild--de Sitter Black Hole.}
\label{sub:uniqueness}

We are now ready to prove Theorem~\ref{thm:uniqueness}, that we restate here for the reader's convenience:

\begin{theorem}
%[Uniqueness of the Schwarzschild--de Sitter Black Hole]
Let $(M,g)$ be a compact $3$-dimensional totally geodesic spacelike slice
bounded by Killing horizons
 within a $(3+1)$-dimensional static solution to the vacuum Einstein equations with positive cosmological constant $\Lambda>0$, and let $u \in \mathscr{C}^\infty(M)$ be the corresponding positive lapse function, vanishing on the boundary of $M$.
Assume that the set
$$
{\rm MAX}(u) \,=\,\{ x\in M\,:\,u(x)\,=\, u_{\rm max}\}
$$
is disconnecting the manifold $M$ into an outer region $M_+$ and an inner region $M_-$ having the same ``virtual mass'' $0< m < 1/(3\sqrt{3})$. Then $(M,g)$
can be isometrically embedded in
the Schwarzschild--de Sitter black hole with mass parameter  $m$.
\end{theorem}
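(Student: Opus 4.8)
The plan is essentially one of assembly: nearly all of the analytic content is packaged in Theorem~\ref{thm:main_ineq_SD} and Theorem~\ref{thm:CKappli}, so the task is to check that their hypotheses link up and to identify the resulting model spacetime. First I would specialise to $n=3$, so that $1/(3\sqrt3) = \mmax = \sqrt{(n-2)^{n-2}/n^n}$, and pass to the analytic setting of problem~\eqref{eq:prob_SD} (with $M$ compact and orientable, as in~\eqref{eq:prob_SD}), recalling that a static triple is necessarily real analytic. Since $\De u = -n\,\umax \neq 0$ on ${\rm MAX}(u)$, the Hessian $\DD u$ is nowhere zero there, so Corollary~\ref{cor:smooth} gives that the top stratum $\Sigma$ of ${\rm MAX}(u)$ is a closed real analytic hypersurface without boundary. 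As a set of dimension at most $n-2 = 1$ cannot disconnect the connected $3$-manifold $M$, the disconnection hypothesis is realised entirely by $\Sigma$; in particular the interface $\overline{M_+}\cap\overline{M_-}$ is a nonempty union of connected components of $\Sigma$, with $M_+$ (outer) and $M_-$ (inner) on its two sides.

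Next I would invoke Theorem~\ref{thm:main_ineq_SD}$(i)$ with $A = M_+$ and $B = M_-$. Since the virtual masses coincide, $\mu(M_+,g,u) = \mu(M_-,g,u) = m \in (0,\mmax)$, the rigidity alternative applies: $\Sigma$ is totally umbilic with constant mean curvature and the induced metric $g^\Sigma$ has constant scalar curvature. More precisely, the computation inside that proof yields
$$
\hhh \,=\, \sqrt{\,\frac{1}{r_0^2(m)} - \frac{1}{r_0^2(\mmax)}\,}\; g^\Sigma\,,
\qquad
\RRR^\Sigma \,=\, \frac{(n-1)(n-2)}{r_0^2(m)}\,.
$$
In dimension $n-1 = 2$, a metric of constant scalar curvature is automatically Einstein; hence $g^\Sigma$ is Einstein and $\Sigma$ is totally umbilic, which are exactly the hypotheses of Theorem~\ref{thm:CKappli}. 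Applying that theorem to a connected component of $\Sigma$ produces an isometric embedding of $(M,g)$ into either a Birmingham--Kottler spacetime or the Nariai spacetime.

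It remains to identify the target. In the construction used in the proof of Theorem~\ref{thm:CKappli}, the Nariai case corresponds to $\lambda = n/(n-2)$, i.e.\ to $\HHH/(n-1) = \sqrt{\lambda - n/(n-2)} = 0$; but our $\Sigma$ has $\HHH/(n-1) = \umax(m)/r_0(m) > 0$ because $m < \mmax$, so we land in a Birmingham--Kottler spacetime of some mass $m' \in (0,\mmax)$. Tracking the same construction, $m'$ is fixed by $\umax(m')/r_0(m') = \sqrt{\lambda - n/(n-2)}$ with $\lambda$ determined through $\RRR^\Sigma = (n-1)(n-2)\lambda$; comparing with the value of $\RRR^\Sigma$ above and using that $m\mapsto\umax(m)/r_0(m)$ is strictly monotone forces $m' = m$. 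Finally, since $g^\Sigma$ has constant positive Gauss curvature $1/r_0^2(m)$ and $\Sigma$ is two-sided in the orientable manifold $M$ (it separates $M_+$ from $M_-$), $\Sigma$ is an orientable surface of constant positive curvature, hence $\Sigma\cong\Sph^2$ with a round metric; so~\eqref{9VII19.1} is the Schwarzschild--de Sitter metric with mass parameter $m$, as claimed.

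Since the heavy lifting is done by Theorems~\ref{thm:main_ineq_SD} and~\ref{thm:CKappli}, the main obstacle is not a new idea but the matching of hypotheses and the final identification. On the one hand one must be careful that the rigidity output of Theorem~\ref{thm:main_ineq_SD}$(i)$ (umbilic, CMC, constant scalar curvature) really upgrades, in the three-dimensional case, to the ``Einstein, totally umbilic'' input required by Theorem~\ref{thm:CKappli}; on the other hand one must exclude the Nariai alternative, pin the Birmingham--Kottler mass to $m$ via the strictly monotone map $m\mapsto\umax(m)/r_0(m)$, and rule out the non-orientable quotient so as to land in Schwarzschild--de Sitter proper rather than in a more general Birmingham--Kottler spacetime.
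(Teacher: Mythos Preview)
Your proof is correct and follows essentially the same approach as the paper: apply the rigidity clause of Theorem~\ref{thm:main_ineq_SD}$(i)$ to the interface $\overline{M_+}\cap\overline{M_-}$, use that a constant-scalar-curvature surface is Einstein, and conclude via Theorem~\ref{thm:CKappli}. In fact you are more careful than the paper's own argument, which omits the explicit exclusion of Nariai, the matching of the mass parameter, and the orientability step identifying $\Sigma$ with a round $\Sph^2$.
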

%
%\begin{theorem}
%Let $(M,\go,u)$ be a $3$-dimensional solution to problem~\eqref{eq:prob_SD}.  Let $A,B$ be two connected components of $M\setminus{\rm MAX}(u)$ such that $\overline{A}\cap\overline{B}=\colon\Sigma\neq\emptyset$.
%\begin{itemize}
%\item If $A$ is outer, $B$ is inner and
%$$
%\mu(A,\go,u)\,=\,\mu(B,\go,u)\,,
%$$
%then $(M,\go,u)$ is equivalent to the Schwarzschild--de Sitter solution~\eqref{eq:SD}.
%
%\smallskip
%
%\item If $A$ and $B$ are both cylindrical,
%then $(M,\go,u)$ is equivalent to the Nariai solution~\eqref{eq:cylsol_D}.
%\end{itemize}
%
%\end{theorem}
%

\begin{proof}
%$\Sigma=\overline{M_+}\cap\overline{M_-}$ is real analytic without boundary.
%We have proven in Theorem~\ref{thm:main_ineq_SD} that the metric $\go^\Sigma$, induced by $\go$ on $\Sigma$, has constant positive scalar curvature equal to $2r_0^{-2}(m)$. Since $\Sigma$ is $2$-dimensional, it follows that $\Sigma$ is isometric to a sphere of radius $r_0(m)$.
Applying Theorem~\ref{thm:main_ineq_SD} with $A=M_+$, $B=M_-$ and $\Sigma=\overline{M_+}\cap\overline{M_-}$, we deduce that $\Sigma$ is totally umbilical and that the metric $g^\Sigma$ induced by $\go$ on $\Sigma$ has constant positive scalar curvature equal to $2r_0^{-2}(m)$ . Since $\Sigma$ is $2$-dimensional, it follows that $\Sigma$ is isometric to a sphere of radius $r_0(m)$ (in particular, $g^\Sigma$ is Einstein).
%, with second fundamental form
%\begin{equation}
%\label{eq:umbil}
%\hhh\,=\,\sqrt{\frac{1}{r_0^2(m)}\,-\,\frac{1}{r_0^2(\mmax)}}\,\,\go^\Sigma\,.
%\end{equation}
We can then apply Theorem~\ref{T9VII19.1} to conclude.
\end{proof}

\section{Local solutions}
 \label{12VII19.1}
A triple $(M,g,u)$ will be said to satisfy the Riemannian static Einstein equations if the spacetime metric
$$
 \gamma_{\mbox{\scriptsize Lor}}: = -u^2 dt^2 + g
 \,,
 \quad
 \partial_t u = 0 = \partial_t g
 \,,
 $$
 satisfies the vacuum Einstein equations, possibly with a cosmological constant.
In this section we will outline how to construct such real analytic triples $(M,g,u)$,
%. We will use the setup of the proof of Theorem~\ref{thm:CKappli}; one can alternatively
by mimicking the Cauchy problem in general relativity, invoking the Cauchy-Kovalevskaya theorem to justify existence. The construction is a straightforward adaptation of that of Darmois~\cite{Darmois}, a summary of the argument can be found in~\cite{Choquet-BruhatBeginnings}.

%We note that the Cauchy-Kovalevskaya theorem, in the current setting, does not care about the signature of the metric, and that the equations for an Einstein Riemannian metric
% %
%$$
% \gamma_{\mbox{\scriptsize Riem}}: = u^2 dt^2 + g
% \,,
% \quad
% \partial_t u = 0 = \partial_t g
% \,,
% $$
% %
% are identical to the Lorentzian ones. There is, however a slight advantage of solving the Riemannian equations instead, as then zeros of $u$ can be smoothly mapped to axes of rotation for the metric $ \gamma_{\mbox{\scriptsize Riem}}$, avoiding the problem of  equations which are potentially singular at the zeros of $u$. We will, however, not pursue this line of thought, as our main interest here is to solve the equations starting from a hypersurface on which $u$ is nowhere vanishing.

Thus, we seek to construct a solution of the static Einstein equations of the form
\begin{equation}\label{13VII19.1}
  g= dr^2 +   \gSigma(r)
  \,,
\end{equation}
where $  \gSigma(r)$ is an $r$-dependent family of metrics on   a, say compact, real analytic manifold $\Sigma$. The initial data at $r=0$ are $u(0)$, $\gSigma(0)$ and their first $r$-derivatives at $r=0$, all taken to be real-analytic.
The Cauchy-Kovalevskaya theorem in Gauss coordinates, as in the Theorem~\ref{thm:CKappli}, provides existence of an interval $r\in (-r_0,r_0)$ and a metric
$$
 \gamma_{\mbox{\scriptsize Lor}} =  dr^2 \underbrace{-u^2 dt^2  +   \gSigma(r)}_{=:\hat g(r)}
 \,,
 $$
 on $\{r\in (-r_0,r_0),t\in\R\}\times \Sigma$.
The metric $ \gamma_{\mbox{\scriptsize Lor}}$ will be $t$-independent and will satisfy the vacuum Einstein equations with a cosmological constant provided that the initial data fields
$$
  \hat g|_{r=0}:= -u(0)^2 dt^2   +   \gSigma(0)
   \quad  \mbox{and} \quad
   \partial_r \hat g|_{r=0}
% \,,
 $$
 are chosen to be time-independent  and  satisfy the usual general relativistic constraint equations.

%
%
% together with a real-analytic  metric ${\gSigma}{}$ and a real-analytic symmetric tensor $\hhh$ satisfying the Riemannian version of the constraint equations:
%  \ptcr{view this as Einstein equations for $ {\gSigma}{}{}^{ij}$ and a scalar field $u$, instead of vacuum?}
%%
%\begin{equation}\label{10VII19.11}
%  D_i \left(
%   \hhh^{ij} - {\gSigma}{}_{k\ell } \hhh^{k\ell} {\gSigma}{}{}^{ij}
%    \right)
%     = 0
%      \,,
%  \qquad
%  S({\bf g}) = ...\,,
%\end{equation}
%%
%where $S({\bf g})$ is the Ricci scalar of the metric ${\bf g}$.
%After solving the Riemannian static equations
%\ptcr{needs defining}
% in a standard manner \emph{\`a la Choquet-Bruhat},
% with initial conditions \ptcr{use \eqref{eq:in_con_2}, but needs adapting}
%%
%\begin{equation}\label{10VII19.12}
%  ...
%\end{equation}
%%
%one obtains a manifold $M=I\times \Sigma$, where $I$ is an interval containg zero, and where $\{0\}\times \Sigma$ becomes the maximum set of  the lapse function $u$.

The above provides many new local solutions of the static Einstein equations. Here local refers to the fact, that the solutions might not necessarily extend to boundaries on which $u$ vanishes.

The question then arises, which data on $\Sigma$ leads to manifolds $M$ which are bounded by Killing horizons; equivalently,  manifolds with boundary on which $u$ vanishes. When starting from a critical level set of $u$, a sufficient condition for this is provided by Theorem~\ref{T9VII19.1}.

\bigskip

{\noindent\sc Acknowledgements.}
The authors would like to thank L. Ambrozio, C. Arezzo, A. Carlotto, and C. Cederbaum
for their interest in our work and for stimulating discussions during the preparation of the manuscript. The authors are members of the Gruppo Nazionale per l'Analisi Matematica, la Probabilit\`a e le loro Applicazioni (GNAMPA) of the Istituto Nazionale di Alta Matematica (INdAM) and are partially founded by the GNAMPA Project ``Principi di fattorizzazione, formule di monotonia e disuguaglianze geometriche''. The paper was partially completed during the authors' attendance to the program ``Geometry and relativity'' organized by the Erwin Schr\"{o}dinger International Institute for Mathematics and Physics (ESI).
PTC acknowledges the friendly hospitality and financial support of University of Trento during part of work on this paper. His research was further supported in part by by  the Austrian Science Fund (FWF) under {projects P23719-N16 and P29517-N27, } and by the Polish National Center of Science (NCN) under grant 2016/21/B/ST1/00940.

\bibliographystyle{plain}
\bibliography{biblio}
%,%
%../references/netbiblio,%
%../references/newbiblio,%
%../references/hip_bib,%
%../references/reffile,%
%../references/newbiblio2,%
%../references/besse,%
%../references/bibl,%
%../references/howard,%
%../references/bartnik,%
%../references/myGR,%
%../references/newbib,%
%../references/Energy,%
%../references/Chrusciel,%
%../references/dp-BAMS,%
%../references/prop,%
%../references/prop2%,%
%%../references/besse2%
%}

%\bibliographystyle{plain}

%\begin{thebibliography}{10}

%\end{thebibliography}

\end{document}